\newtheorem{theorem}{Theorem}
\newtheorem{corollary}{Corollary}
\newtheorem{lemma}{Lemma}
\newtheorem{example}{Example}
\newcommand{\Tu}{Tur\'{a}n }
\begin{document}
\title[Polynomials ]{Tur\'{a}n inequalities for three term recurrences with monotonic coefficients}

\author[I. Krasikov]{Ilia Krasikov}

\address{   Department of Mathematical Sciences,
            Brunel University,
            Uxbridge UB8 3PH United Kingdom}
\email{mastiik@brunel.ac.uk}

\subjclass{33C45}



\begin{abstract}
We establish some new Tur\'{a}n's type inequalities for orthogonal polynomials defined by a three-term recurrence with monotonic coefficients. As a corollary we deduce asymptotic bounds on the extreme zeros of orthogonal polynomials with polynomially growing coefficients of the three-term recurrence.
\end{abstract}

\maketitle
\section{Introduction}
\label{sec1}
Consider a family $\{p_i \}$ of orthogonal polynomials defined by the initial conditions
$p_{-1}=0,$  $p_0=1,$ and the three term recurrence
\begin{equation}
\label{grec}
\frac{a_k}{ c_k} \,  p_k
=(x-b_{k-1})p_{k-1}-a_{k-1}c_{k-1} \, p_{k-2} ,
\end{equation}
  where the coefficients $a_k$ and $c_k$ are strictly positive for all $k$ besides $a_0$ which will be convenient to set to zero.
  Thus the choice $c_k = 1$ or $c_k = a_k$  for all $k$ corresponds to the orthonormal and the monic normalization respectively. We will use boldface characters to distinguish orthonormal polynomials from these in a different normalization.

Although all the information about any member of the family $\{p_k\}$ is encoded in
(\ref{grec}), it turns out to be notoriously difficult to extract it from the recurrence.
One of few tools we possess today to deal with the problem is so-called Tur\'{a}n inequalities.
Usually they are written in the form
\begin{equation}
\label{clasturan}
p_k^2(x)-p_{k-1}(x)p_{k+1}(x)>0
  \end{equation}
  and are valid for many particular families of orthogonal polynomials including all classical ones as well as some other members of the Askey scheme. They play an important role
  in many applications, e.g. to recover the absolutely  continuous part of the corresponding orthogonality measure or to bound the extreme zeros (see e.g. \cite{szwarc1}, \cite{chenism}, \cite{dimit}, \cite{krturan}, \cite{mtn1}).

   In terms of the coefficients of the three-term recurrence very general conditions for the validity of (\ref{clasturan}) were given in \cite{szwarc}, provided that the support of the corresponding measure is finite or half-infinite, such that the polynomials can be normalized to one at some point. The symmetric case $b_i \equiv 0$ was considered in \cite{ismail1} and \cite{krturan}. It seems almost nothing is known in the general asymmetric case with the measure supported on the whole axis.

   In a sense, Tur\'{a}n inequalities can be viewed as an analogue of the Laguerre inequality $f'^2-f f'' >0.$ Since the last one is just the first member of an infinite family of inequalities discovered by Jensen \cite{jensen} (and rediscovered a number of times later, see e.g. \cite{patrick1}), it is worth trying to look for higher order generalizations of Tur\'{a}n inequalities. Some results in this direction were given in \cite{dimit} and \cite{krturan} and recently this question was raised again by P. Nevai \cite{nevai}.

   Another related set of inequalities comes from the Newton inequality stating that
   $ a_i^2-a_{i-1}a_{i+1} >0,$
   where $a_i$ are the coefficients of a real polynomial
   $$\sum_{i=0}^n a_i {n \choose i}  x^i$$
   with only real zeros \cite{dimit}, \cite{nicu}.

   It is not clear how far such an analogy can go. For example, an important difference between these and Tur\'{a}n inequalities is that in the last case normalization of $p_k$ plays a crucial role. In fact, both Laguerre and Newton type inequalities lead directly to higher order Tur\'{a}n inequalities only if the corresponding orthogonal polynomials have a generating function of a very special type \cite{dimit}, \cite{patrick1}, \cite{patrick2}.
   In this paper we explore these analogies to establish some new Tur\'{a}n inequalities for three term recurrence with monotonic coefficients both in the symmetric and in the general case. Thus, unless the coefficients of (\ref{grec}) are bounded, the corresponding measure has an unbounded support.

   As an application we give an upper bound on the largest zero of a polynomial satisfying (\ref{grec}) with the coefficients $a_k$ and $b_k$ of polynomial growth,
   $$a_k \sim k^r, \; \; b_k \sim  k^s.$$

   The following remarkable result for the symmetric case was obtained in \cite{mtn}, \cite{mtn2}:
   \begin{theorem}
   \label{themtn}
   Let $p_k$ be a family of monic symmetric polynomials given by
   $$p_{k}=x p_{k-1}-a_{k-1}^2 p_{k-2} , $$
   where
   $$a_k=c \, k^r \left(1+o \left(k^{-2/3} \right) \right), $$
   as $k \rightarrow \infty$ and $r >0.$ Let
   $x_{1,k} < x_{2,k} < \cdots  , x_{k,k}$
   be the zeros of $p_k.$ Then, for a fixed $j,$
   $$x_{k-j,k} =2 \, c \, k^r \left(1- r^{2/3} 6^{-1/3} i_j \, k^{-2/3}+o \left( k^{-2/3} \right) \right),$$
   where $i_1 < i_2 < \cdots $ are positive zeros of the Airy function.
      \end{theorem}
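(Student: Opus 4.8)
The plan is to obtain the asymptotics of the extreme zeros from a suitable Tur\'{a}n-type inequality, following the strategy that motivates the paper: a sufficiently sharp two-sided control of $p_k^2 - p_{k-1}p_{k+1}$ (or, more precisely, of a renormalized discriminant-like quantity built from three consecutive polynomials) pins down the location of the largest zero. Concretely, since the $p_k$ are monic and symmetric with $a_k \sim c\,k^r$, I would first rescale $x = 2c\,k^r t$ and study the behavior of the ratio $p_k/p_{k-1}$ near $t=1$, which is where the spectrum accumulates. The recurrence $p_k = x p_{k-1} - a_{k-1}^2 p_{k-2}$ becomes, after dividing by $p_{k-1}$, a nonlinear (Riccati-type) recursion for $u_k = p_k/p_{k-1}$; the fixed-point analysis of this recursion, linearized around the turning point $x \approx 2a_{k-1}$, is what produces the Airy function.

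The key steps, in order, would be: (1) Establish the relevant Tur\'{a}n inequality for monotonic $a_k$ — this is exactly the type of result the paper sets up — giving $p_k^2 - p_{k-1}p_{k+1} > 0$ together with a matching upper bound in the regime $x$ slightly below $2a_k$; equivalently, control the "Wronskian-like" expression so that the zeros of $p_k$ are separated from those of $p_{k-1}$ and $p_{k+1}$ in a quantitatively precise way. (2) Introduce the scaling $k^{-2/3}$ suggested by Theorem \ref{themtn}: write the index shift $k \mapsto k - j$ and the zero as $x_{k-j,k} = 2c\,k^r(1 - \delta)$ with $\delta = O(k^{-2/3})$, and expand $a_{k-1}^2 = c^2 (k-1)^{2r}(1 + o(k^{-2/3}))^2$ using the hypothesis on $a_k$; the error term $o(k^{-2/3})$ in the hypothesis is chosen precisely so that it does not interfere with the leading Airy-scale correction. (3) Show that in this scaling the three-term recurrence degenerates to the Airy differential equation $y'' = \xi y$ after the substitution $\delta = r^{2/3} 6^{-1/3}\,\xi\, k^{-2/3}$ — the constants $r^{2/3}$, $6^{-1/3}$ come from matching the discrete second difference of $\log a_k$ (which is of order $r/k^2$) against the local curvature of the "potential" near the turning point, and the factor $6$ from the Airy normalization $\mathrm{Ai}''(z)=z\,\mathrm{Ai}(z)$ after absorbing a $2$ from the symmetric doubling and a $3$ from the cubic scaling. (4) Conclude that $p_k$ has a zero near $x$ iff the corresponding Airy solution vanishes, i.e. iff $\xi = -i_j$, which gives the stated formula.

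I would combine steps (2)–(4) by turning the recurrence into an integral/sum representation and applying a discrete WKB (Liouville–Green) approximation: away from the turning point the solution is oscillatory/exponential with the usual WKB phase $\sum \arccos(x/2a_j)$, near it the Airy connection formula applies, and the quantization condition for $x$ to be a zero of $p_k$ is that the total phase plus the Airy correction equals an integer multiple of $\pi$. Matching the subleading term of this quantization condition against $k^{-2/3}$ yields the coefficient $r^{2/3}6^{-1/3} i_j$.

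The main obstacle I expect is step (1) together with the uniformity in step (3): one needs the Tur\'{a}n / discrete-WKB estimates to hold \emph{uniformly} as $x$ ranges over a shrinking $k^{-2/3}$-neighborhood of the turning point $2a_{k-1}$ and simultaneously as $k\to\infty$, and one must verify that the $o(k^{-2/3})$ perturbation of the recurrence coefficients genuinely contributes only to the $o(k^{-2/3})$ error in the zero and not to its Airy-scale term — this is the delicate point where the hypothesis is exactly as strong as it needs to be, and controlling the accumulated error over $\sim k^{2/3}$ relevant indices near the turning point is the crux of the argument.
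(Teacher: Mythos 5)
This statement is not proved in the paper at all: it is the M\'{a}t\'{e}--Nevai--Totik theorem, quoted from \cite{mtn}, \cite{mtn2}, and the author's only comment on its proof is that ``the behaviour of a few largest zeros under an appropriate rescaling mimics that of the Hermite polynomials.'' The actual argument in those references is a comparison/transfer argument: one shows that after the rescaling $x = 2ck^r t$ the recurrence is close enough to the (rescaled) Hermite recurrence that the known Plancherel--Rotach/Airy asymptotics of the extreme Hermite zeros carry over. Your proposal takes a different route (direct discrete WKB with an Airy connection formula at the turning point), which is a legitimate alternative in principle, but as written it is a plan rather than a proof: every genuinely hard step --- the uniform validity of the Liouville--Green approximation in a shrinking $k^{-2/3}$-neighbourhood of $2a_{k-1}$, the discrete Airy connection formula, and the control of the accumulated error over the $\sim k^{2/3}$ indices near the turning point --- is named as ``the main obstacle'' or ``the crux'' but not carried out. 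Naming the difficulty is not the same as resolving it, and the constants $r^{2/3}6^{-1/3}$ are asserted to ``come from matching'' without any computation that could be checked.

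There is also a structural problem with your step (1). A Tur\'{a}n inequality is a one-sided positivity statement, and the paper is explicit that this tool cannot produce the sharp constant in the second-order term: the author states that the Tur\'{a}n-based method of \cite{krturan} gives a non-asymptotic version of this theorem ``yet with an unavoidably weaker constant at the second order term,'' and for the asymmetric case he only \emph{conjectures} that his method captures the correct order $k^{-2/3}$. So the opening move of your argument --- deriving the Airy-scale location of $x_{k-j,k}$, with the exact coefficient $r^{2/3}6^{-1/3} i_j$, from a Tur\'{a}n inequality plus ``a matching upper bound'' --- starts from a tool that is known to be too coarse for the claimed conclusion. To recover the exact Airy constant you would have to abandon step (1) entirely and either complete the uniform turning-point analysis of steps (2)--(4) in full detail, or follow the original route of reducing to the Hermite case by a perturbation argument on the recurrence coefficients, for which the hypothesis $a_k = ck^r(1+o(k^{-2/3}))$ is calibrated.
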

      The crucial point of the proof of this theorem is that the behaviour of a few largest zeros under an appropriate rescaling mimics that of the Hermite polynomials.
      It was also shown that at least for some particular families of symmetric polynomials
      and the extreme zeros similar results can be obtained via chain sequences \cite{chenism}. The last paper provides also very sharp bounds for the extreme zeros of certain asymmetric polynomials. This seems to be the only known result of this type for an infinite interval of orthogonality. In \cite{krturan} we gave some new Tur\'{a}n inequality for symmetric polynomials and used them to give a non-asymptotic version of Theorem \ref{themtn}, yet with an unavoidably weaker constant at the second order term, since the obtained bounds hold for any $k .$

    It turns out that to extend Tur\'{a}n inequalities to the general asymmetric case
    one has to impose rather severe constrains either on the coefficients of (\ref{grec}) or to restrict the rang of $x$ for which the inequality holds. In particular, most of the results of this paper deal with
 monotonic sequences $a_k$ and $b_k,$ where $a_k$ is strictly increasing. Moreover, without loss of generality we will assume that the sequence $b_k$ is nondecreasing. To justify this assumption it is enough to notice that the polynomials $q_k(x)=(-1)^k p_k(-x)$ satisfy the recurrence
$$\frac{a_k}{ c_k} \,  q_k
=(x+b_{k-1})q_{k-1}-a_{k-1}c_{k-1} \, q_{k-2} . $$

     It is important to stress that to apply the approach adopted in this paper for bounding the largest zero $x_{kk}$ (the least zero $x_{1k}$) it is enough to have a Tur\'{a}n inequality which holds for $x >x_{kk}$ (respectively $x <x_{1k}$).
   Here we will establish some inequalities of this type. In particular we use one such
  (rather technical) higher order inequality to give a version of Theorem \ref{themtn} in the general asymmetric case.

  Let us notice that for polynomially growing coefficients of (\ref{grec}),
   $a_i  \approx i^r , \; \; b_i \approx \gamma \, i^s ,$ the sought asymptotic depends on the three parameters $r,s,$ and $\gamma$ (seemingly more general case
   $a_i  \approx \gamma_1 i^r , \; \; b_i \approx \gamma_2 \, i^s ,$ is just a rescaling, see e.g. formulas (\ref{levshm}) and (\ref{levshM}) below).
   As we don't know asymptotics of the extreme zeros of any specific three-parametric family
   with polynomially growing coefficients one hardly can use the technics of \cite{chenism} or \cite{mtn}.
  On the other hand we conjecture that our method gives the correct order for the second term of the asymptotic yet with a weaker constant. Namely, we prove the following:
  \begin{theorem}
  \label{thmainzer}
  Let $$m= \frac{1}{\left( \frac{3}{2}\right)^{1/r}-1}-2 \,$$
  $$\rho= \frac{2}{3} \, \min\{1,r-s+1 \}, $$ and let
 $$a_i =i^r +o(i^{r- \rho}), \; \;
 b_i=\gamma \, i^s +o(i^{r -  \rho})  , $$
where  $r\ge 1$ and $0 \le s <r+1 $ are fixed numbers, and the constant $\gamma$ satisfies
$$\gamma \le
\left\{
\begin{array}{cc}
\frac{(m+2)^{r-s+1}}{2s} \, , & 0 <s <1,\\
&\\
\frac{(m+3)^{r-s+1}}{3s} \, , & 1 \le s < r+1.
\end{array}
\right.$$
Then for sufficiently large $k$ the largest zero of $p_k$ does not exceed
\begin{equation}
\label{eqmain}
\gamma k^s+k^r \left( 2-2^{-4/3} \delta^{2/3} \, k^{- \rho} \right)+
o \left( k^{r- \rho} \right) ,
\end{equation}
where $\delta$ is any fixed number satisfying
$$
\delta < \left\{
\begin{array}{cc}
 2r  ,& 0 \le s <r,\\
&\\
 (2+\gamma)r  ,& s=r,\\
& \\
\gamma \, s ,& r< s <r+1  .
\end{array}
\right.
$$
\label{thmain}
\end{theorem}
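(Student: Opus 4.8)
The plan is to convert an appropriate \Tu-type inequality valid for $x$ larger than $x_{k,k}$ into an upper bound on $x_{k,k}$ itself, exactly as announced in the introduction: if one has an inequality of the form $F_k(x)>0$ for all $x>x_{k,k}$ whose largest root $\xi_k$ can be estimated explicitly, then $x_{k,k}\le\xi_k$. So the first step is to pin down which higher-order \Tu\ inequality from the body of the paper applies here — presumably a technical inequality of the shape
\begin{equation*}
\alpha_k(x)\,p_k^2(x)-\beta_k(x)\,p_{k-1}(x)p_{k+1}(x)-\gamma_k(x)\,p_{k-1}^2(x)\;>\;0,
\end{equation*}
with coefficients $\alpha_k,\beta_k,\gamma_k$ built from $a_i,b_i$, holding for $x$ beyond the spectrum under the monotonicity hypotheses on $a_i$ and $b_i$. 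The constant constraint on $\gamma$ in the statement is precisely the clause that guarantees the required monotonicity (e.g. that $b_k-b_{k-1}$ stays dominated by the increments of $a_k$ in the relevant range), and the definition of $m$ is chosen so that $(3/2)=(1+ (m+2)/k \cdot \ldots )^r$-type comparisons go through when $a_i\sim i^r$; I would verify this algebraic role of $m$ first, since everything downstream is calibrated to it.

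Next I would carry out the rescaling. Set $x=\gamma k^s+2k^r - t\,k^{r-\rho}$ and substitute into the chosen inequality, expanding $a_{k+j}$, $b_{k+j}$ for bounded $j$ using $a_i=i^r+o(i^{r-\rho})$, $b_i=\gamma i^s+o(i^{r-\rho})$. The point of the exponent $\rho=\tfrac23\min\{1,r-s+1\}$ is to make the leading behaviour of the rescaled three-term recurrence collapse onto an Airy-type second difference equation: after dividing by the dominant power of $k$, the recurrence coefficients should become $1+c_1 t k^{-\rho}+\cdots$ so that the discrete operator tends to $y''-\mathrm{(const)}\,t\,y$, whose relevant solution changes sign near the first Airy zero. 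This is where the constant $2^{-4/3}\delta^{2/3}$ and the three-case description of $\delta$ come from: $\delta$ records the coefficient $c_1$ (which differs according to whether $s<r$, $s=r$, or $s>r$, because then $b_k$ contributes at the same order as, or dominates, the perturbation of $a_k^2$), and $2^{-4/3}\delta^{2/3}$ is the value of $t$ at which the comparison Airy solution first vanishes after the rescaling constants are unwound. I would make this precise by comparing the rescaled $p_{k}$ with a solution of the model difference equation via a standard Sturm/monotonicity argument on the \Tu\ quadratic form, showing $F_k>0$ for $t<2^{-4/3}\delta^{2/3}-\varepsilon$ and all large $k$.

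The main obstacle, I expect, is controlling the error terms uniformly. The hypotheses only give $o(i^{r-\rho})$ control on $a_i,b_i$, which is exactly at the borderline of the $k^{-\rho}$ scale that governs the second-order term; one must check that these $o(\cdot)$'s, together with the $o(i^{r-\rho})$ we are allowed to lose in the final bound, do not contaminate the coefficient of $k^{r-\rho}$ in \eqref{eqmain}. Concretely this means tracking, for $j$ in a bounded window around $k$, the quantities $a_{k+j}^2-a_{k+j-1}a_{k+j+1}$, $b_{k+j}-b_{k+j-1}$, and their interaction with the factor $(x-b_{k+j})$ after the substitution, and showing every contribution is either absorbed into $o(k^{r-\rho})$ or produces exactly the Airy term. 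Once the sign of $F_k$ is established on the stated half-line, the conclusion $x_{k,k}\le \gamma k^s+k^r(2-2^{-4/3}\delta^{2/3}k^{-\rho})+o(k^{r-\rho})$ is immediate, since $\delta$ can be taken arbitrarily close to its supremum and $\varepsilon$ arbitrarily small, which together with the $o(k^{r-\rho})$ slack gives the strict inequalities in the definition of $\delta$.
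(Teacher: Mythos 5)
Your overall framing --- find a Tur\'an-type inequality valid for $x>x_{kk}$ and read an upper bound on the largest zero off its positivity set --- matches the paper's strategy, and you correctly sense that the constraint on $\gamma$ encodes a monotonicity condition (it guarantees $\alpha_i\ge\beta_{i+1}-\beta_i$, hypothesis $(iii)$ of Theorem \ref{t341}). But the central analytic step you propose is not what the paper does and would not produce the stated bound. You rescale $x=\gamma k^s+2k^r-t\,k^{r-\rho}$ and argue by comparison with an Airy-type model equation, asserting that $2^{-4/3}\delta^{2/3}$ is the value of $t$ at which the comparison Airy solution first vanishes. It is not: an Airy mechanism would produce a constant proportional to the first Airy zero $i_1$, as in Theorem \ref{themtn}, and the paper explicitly states that the M\'at\'e--Nevai--Totik technique is unavailable for this three-parameter family and that its own constant is deliberately weaker than the conjectured Airy one. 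The constant $2^{-4/3}\delta^{2/3}$ arises instead from a purely algebraic computation: the fourth-order inequality $T_4({\bf p}_k)\ge0$ of Theorem \ref{t341} (not the second-order form $\alpha_k p_k^2-\beta_k p_{k-1}p_{k+1}-\gamma_k p_{k-1}^2$ you sketch) is rewritten as a quadratic $K_2t^2+K_1t+K_0$ in $t=p_k/p_{k+1}$; the curve $f=-K_1/2K_2$ is shown to meet the last branch of $t_k$, so the largest zero is dominated by the largest root of the degree-six polynomial $G=4K_0K_2-K_1^2$; one then proves $G$ has exactly two real roots and evaluates its sign at the test point $x=\gamma n^s+n^r\bigl(2-2^{-4/3}\delta^{2/3}n^{-\rho}\bigr)$. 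None of this discriminant analysis or root counting can be replaced by a Sturm comparison, and without it the specific constant in \eqref{eqmain} is unobtainable.

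There are two further missing ingredients. First, the hypotheses of the underlying $T_4$ inequality are delicate convexity and ratio conditions (nonnegativity of the quadratics $\mathcal{P}_i(t)$) that the perturbed sequences $a_i=i^r+o(i^{r-\rho})$ need not satisfy; the paper therefore first passes to the exact test sequences $\alpha_i=(i+m)^r$, $\beta_i=\gamma(i+m)^s$ and transfers the conclusion back through the Rayleigh-quotient perturbation bound $|x_{kk}-\tilde x_{kk}|<2\epsilon+\delta$ for the Jacobi matrix (Lemma \ref{lemreduction}). Your plan works directly with the $o(\cdot)$-perturbed coefficients, and the difficulty you flag about controlling error terms uniformly is precisely what this reduction is designed to bypass. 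Second, verifying that the test sequences do satisfy the $T_4$ hypotheses --- where $m$ and the bound on $\gamma$ actually do their work --- occupies most of Section \ref{sec4} and is not addressed by your remark that $m$ calibrates certain $(3/2)$-power comparisons.
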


The restrictions on $\gamma$ and $\delta$ in the last theorem are definitely not best possible. However it seems that some constraints of that type are necessary.
Concerning the least zero of $p_k$ let us notice that the first term of its asymptotic (or, more precisely a lower bound)
is given by
$\gamma k^s-2k^r ,$ (see e.g. Theorem \ref{zerc} below).
 It seems plausible that the second order term may be quite different from $O(k^{r- \rho})$
  in general.

  The paper is organized as follows. In the next section we introduce some possible generalizations of Tur\'{a}n inequalities. Under appropriate conditions the suggested inequalities will be proved in section \ref{sec3}.  We also provide some examples with the Stieltjes-Wigert, Al-Salam-Carlitz and the Meixner-Pollaczek polynomials. Sections \ref{sec4} and \ref{sec5} deal with the proof of Theorem \ref{thmainzer}. It turns out that it is enough to prove the result for a suitably chosen test sequences $a_k=\alpha_k, \; b_k=\beta_k.$ Such sequences and an appropriate Tur\'{a}n type inequality will be given in section \ref{sec4}. We'll need some rather tedious calculations to verify that the chosen sequences $\{\alpha_k\}, \, \{\beta_k\}$ satisfy the inequality. Given a Tur\'{a}n type inequality, there is a quite straightforward way to derive the sought bounds on the extreme zeros, provided some technical conditions are fulfilled. This will be accomplished in section \ref{sec5}, thus proving Theorem \ref{thmainzer}.

\section{Tur\'{a}n inequalities. Preliminaries}
\label{sec2}
Consider the case of monic polynomials defined by
\begin{equation}
\label{monicrec}
q_{k+1}(x)=(x-b_k)q_k(x)+a_k^2 q_{k-1}(x); \; \; q_{-1}=0, \, q_0=1.
\end{equation}
First we notice that Tur\'{a}n's inequality
 \begin{equation}
\label{monicturan}
q_k^2(x)-\xi_k q_{k-1}(x) q_{k+1}(x) >0,
\end{equation}
is equivalent to the following one for polynomials in arbitrarily normalization defined by (\ref{grec}),
$$p_k^2(x)-\xi_k  \frac{a_{k+1}c_k}{a_k c_{k+1}} \, p_{k-1}(x) p_{k+1}(x). $$
Since
$$q_k^2(x)-q_{k-1}(x)q_{k+1}(x)=$$
$$(b_k-b_{k-1})x^{2k-1}+
\left(a_k^2-a_{k-1}^2-(b_k-b_{k-1})\left(b_k+2 \sum_{i=0}^{k-1}b_i \right) \right)x^{2k-2}+O(x^{2k-3}),$$
then (\ref{monicturan}) may hold (and in fact holds) with $\xi_k=1$ for all $x \in \mathbb{R}$ only if $b_k$ are constant and $a_k$ are nondecreasing.
Thus, if we want to deal with general asymmetric orthogonal polynomials, we have either to choose $\xi_k <1$ of to restrict $x$ to a subset of $\mathbb{R}$ or both.
On the other hand, for sufficiently small $\xi_k ,$ (\ref{monicturan}) will be fulfilled for all $x.$ Indeed, if $\xi_k <1$ then $q_k^2(x)-\xi_k q_{k-1}(x) q_{k+1}(x) >0$ in the neighbourhood of
$\pm \infty,$ as well as around the zeros of $q_k(x),$ as there $q_{k-1}(x)$ and $ q_{k+1}(x)$ have opposite signs. This simple argument can be presented in a quantitative form, in particular it is not difficult to show that one can take
$$\xi_k= \frac{4a_k^2}{4a_k^2+\max \left((x_{1k}-b_k)^2  ,(x_{kk}-b_k)^2  \right)},$$
and this, in a sense, is best possible. Since in many cases an interval containing the extreme zeros is known, this yields an explicit \Tu inequality.

We will need the following theorem due to Wendroff (see e.g. \cite[Th.2.10.1]{ismailbook}).
\begin{theorem}
\label{wendr}
Given two real sequences $\{a_i\}, \; i \ge N,  \; \ a_i >0,$ and $\{b_i\}, \; i \ge N,$
and two sequences of interlacing numbers $x_1<x_2< \ldots <x_N$ and $y_1<y_2< \ldots <y_{N-1},$
such that $x_i <y_i <x_{i+1}, \; i=1, \ldots ,N-1.$ There is a family of monic orthogonal polynomials $\{q_i(x)\}$ such that
$$q_N(x)=\prod_{i=1}^N (x-x_i), \; \; q_{N-1}(x)=\prod_{i=1}^{N_1} (x-y_i), $$
and
$$q_{k+1}(x)=(x-{\widetilde b}_k)q_k(x)- {{\widetilde a}_k}^{\,2} \, q_{k-1}(x)$$
and
${\widetilde a}_k=a_k, \; \; {\widetilde b}_k=b_k, $ for $k \ge N.$
\end{theorem}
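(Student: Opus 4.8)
The plan is to split the construction into two halves. For $k\ge N$ I would simply \emph{define} $q_{k+1}$ by running the recurrence forward from the prescribed pair $q_{N-1},q_N$ with the given coefficients $a_k,b_k$; since $a_k>0$ this half is automatic and the asserted equalities $\widetilde a_k=a_k,\ \widetilde b_k=b_k$ hold there by fiat. All the content lies in running the recurrence \emph{backward} from $q_N,q_{N-1}$, producing monic polynomials $q_{N-2},q_{N-3},\dots,q_0$ together with coefficients $\widetilde b_k,\widetilde a_k$ for $0\le k\le N-1$ such that
$$q_{k}(x)=(x-\widetilde b_{k-1})\,q_{k-1}(x)-\widetilde a_{k-1}^{\,2}\,q_{k-2}(x)$$
with $\widetilde a_{k-1}^{\,2}>0$, and such that $q_0=1$ and $q_{-1}=0$. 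The engine is Euclidean-type polynomial division controlled by the interlacing hypothesis, applied by downward induction.

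Here is the inductive step. Assume $q_k$ and $q_{k-1}$ are monic of degrees $k$ and $k-1$ with strictly interlacing real zeros $u_1<\dots<u_k$ and $v_1<\dots<v_{k-1}$, $u_i<v_i<u_{i+1}$. The polynomial $q_k-x q_{k-1}$ has degree $\le k-1$, so there is a unique $\widetilde b_{k-1}$ for which $r:=q_k-(x-\widetilde b_{k-1})q_{k-1}$ has degree $\le k-2$. Evaluating at a zero of $q_{k-1}$ gives $r(v_j)=q_k(v_j)=\prod_{i=1}^k(v_j-u_i)$, which is nonzero by strict interlacing and has sign $(-1)^{k-j}$. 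Hence $r$ alternates sign along $v_1,\dots,v_{k-1}$, producing at least $k-2$ sign changes; together with $\deg r\le k-2$ this forces $\deg r=k-2$ with $k-2$ simple real zeros, exactly one in each interval $(v_j,v_{j+1})$. In particular the largest zero of $r$ lies strictly below $v_{k-1}$, so $r(v_{k-1})$ has the same sign as the leading coefficient of $r$; but $r(v_{k-1})=q_k(v_{k-1})$ has sign $(-1)^1=-1$, so that leading coefficient is negative. Therefore $r=-\widetilde a_{k-1}^{\,2}\,q_{k-2}$ with $\widetilde a_{k-1}^{\,2}>0$ and $q_{k-2}$ monic of degree $k-2$, and the zeros of $q_{k-2}$, being precisely the zeros of $r$, strictly interlace those of $q_{k-1}$ — exactly the hypothesis needed to iterate.

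Applying this step for $k=N,N-1,\dots,2$ (the interlacing at $k=N$ being given) yields all of $q_{N-2},\dots,q_0$ with positive $\widetilde a_k^{\,2}$, and since $q_0$ is monic of degree $0$ we get $q_0=1$. At the bottom level $k=1$ the remainder $q_1-(x-\widetilde b_0)q_0$ is a constant, which the choice of $\widetilde b_0$ makes vanish, so $q_{-1}=0$ (the value of $\widetilde a_0$ being irrelevant). The resulting sequence obeys a three-term recurrence with strictly positive $\widetilde a_k^{\,2}$ for all $k\ge 1$, hence by Favard's theorem it is a family of orthogonal polynomials with respect to a positive measure, and by construction $\widetilde a_k=a_k$, $\widetilde b_k=b_k$ for $k\ge N$. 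The only delicate point is the sign bookkeeping in the inductive step — certifying that the remainder's leading coefficient is \emph{negative}, for which one genuinely needs that its largest zero lies to the left of $v_{k-1}$; everything else is routine degree counting.
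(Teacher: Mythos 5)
The paper does not prove this statement at all: it is quoted as Wendroff's theorem, with a pointer to \cite[Th.~2.10.1]{ismailbook}, so there is no in-paper argument to compare against. Your proof is correct and self-contained, and it is essentially the classical proof of Wendroff's theorem: the forward half is indeed automatic, and the backward half runs Euclidean division downward, using strict interlacing to force $\deg r=k-2$ for $r=q_k-(x-\widetilde b_{k-1})q_{k-1}$, to place exactly one simple zero of $r$ in each gap $(v_j,v_{j+1})$, and then --- since all zeros of $r$ lie to the left of $v_{k-1}$ while $r(v_{k-1})=q_k(v_{k-1})<0$ --- to conclude that the leading coefficient of $r$ is negative, so $r=-\widetilde a_{k-1}^{\,2}q_{k-2}$ with $\widetilde a_{k-1}^{\,2}>0$ and the interlacing hypothesis propagates; Favard's theorem then finishes. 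The sign bookkeeping you flag as the delicate point is indeed the crux, and you carry it out correctly. The only step worth a parenthetical remark is the bottom of the descent $k=2$, where $r$ is a nonzero constant and the ``largest zero of $r$'' clause is vacuous, but the conclusion still holds since the constant equals $q_2(v_1)<0$; similarly one should record that $q_0$, being monic of degree zero, is identically $1$, which you do.
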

\begin{theorem}
\label{prop1}
Let $\{q_n\}$ be a family of monic orthogonal polynomials satisfying
 (\ref{monicrec}) and let $x_{1n}<x_{2n}< \ldots < x_{nn}$ be the zeros of $q_n(x).$
Then for $k \ge 2$ inequality (\ref{monicturan}) holds for
\begin{equation}
\label{bestxi}
\xi_k= \left\{
\begin{array}{cc}
\frac{4a_k^2}{4a_k^2+(x_{kk}-b_k)^2} \, , & x> b_k-\sqrt{4a_k^2+(x_{kk}-b_k)^2}\, , \\
& \\
\frac{4a_k^2}{4a_k^2+(x_{1k}-b_k)^2} \, , & x< b_k+\sqrt{4a_k^2+(x_{1k}-b_k)^2}\, . \\
\end{array}
\right.
\end{equation}
Moreover, one can choose $\xi_k=1,$ provided either $x >b_k-2a_k$ and $  x_{kk} \ge b_k,$
or $x <b_k+2a_k$ and $  x_{1k} \le b_k.$

The result of (\ref{bestxi}) is the best possibe in the sense that for any fixed
 $k \ge 2,\; a_k >0, \; b_k, \; x_{1,k}, \; x_{kk}; \,$ $x_{1,k}< x_{kk},$ $b_k < x_{kk},$
 (resp. $b_k > x_{1k}$),
 and any
 $\xi_k >  \frac{4a_k^2}{4a_k^2+(x_{kk}-b_k)^2} \, , $ (resp. $\xi_k >  \frac{4a_k^2}{4a_k^2+(x_{1k}-b_k)^2} \,  $),
 there is a family of monic orthogonal polynomials and a point $x >x_{kk}$ (resp. $x <x_{1k}$ ) such that
 $$q_k^2(x)- \xi_k q_{k-1}(x)  q_{k+1}(x) <0.$$
\end{theorem}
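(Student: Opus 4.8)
The plan is to reduce everything to the quadratic nature of the expression $q_k^2 - \xi_k q_{k-1} q_{k+1}$ as a polynomial in $\xi_k$, combined with the recurrence (\ref{monicrec}). First I would eliminate $q_{k+1}$ using $q_{k+1} = (x - b_k) q_k + a_k^2 q_{k-1}$, so that
\[
q_k^2 - \xi_k q_{k-1} q_{k+1}
= q_k^2 - \xi_k (x-b_k) q_k q_{k-1} - \xi_k a_k^2 q_{k-1}^2 .
\]
View the right-hand side as a quadratic form in $(q_k, q_{k-1})$. Its value is positive for all real pairs $(q_k,q_{k-1})$ iff the form is positive definite, i.e. iff $\xi_k a_k^2 < 0$ or the discriminant condition $\xi_k^2(x-b_k)^2 + 4\xi_k a_k^2 < 0$ holds; since $a_k>0$, the latter reads $\xi_k < 4a_k^2/\big(4a_k^2 + (x-b_k)^2\big)$ when $\xi_k>0$. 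But we cannot simply ask for positivity over \emph{all} pairs: we need it only along the actual trajectory $(q_k(x), q_{k-1}(x))$. The key structural fact, which I would extract from the recurrence and the interlacing of zeros (via Theorem \ref{wendr}), is that for $x$ to the right of all zeros of $q_k$ and $q_{k-1}$ one has $q_k(x) > 0$, $q_{k-1}(x) > 0$; more importantly, at such $x$ the ratio $q_k(x)/q_{k-1}(x)$ can be \emph{any} positive real number as the free parameters ($x_{1k}$ and the lower-index data, or equivalently a Wendroff extension) vary, while $a_k, b_k$ stay fixed. This is precisely what forces the sharp threshold.

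So the argument splits into a sufficiency half and a sharpness half. For sufficiency: fix $x > b_k - \sqrt{4a_k^2 + (x_{kk}-b_k)^2}$. I would show $(x-b_k)^2 \le \max\big((x_{1k}-b_k)^2,(x_{kk}-b_k)^2\big)$ fails to be the obstruction here — rather, the point is that for $x \ge x_{kk}$ we have $q_k(x)>0$ and $q_{k+1}(x)>0$ while $q_{k-1}(x)>0$, and a direct estimate using $(x-b_k)^2 < 4a_k^2 + (x_{kk}-b_k)^2$ together with monotonicity of the quadratic in $x-b_k$ gives the bound; for $x$ between the stated cutoff and $x_{kk}$, or around the zeros of $q_k$, one uses the classical sign argument (at a zero of $q_k$, $q_{k-1}$ and $q_{k+1}$ have opposite signs, so the product is negative and the inequality is immediate for $\xi_k>0$), and interpolates. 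The cleanest route is to observe that on $(-\infty, x_{kk}]$ the function $(x - b_k)$ and the polynomial values are controlled, and on $[x_{kk}, \infty)$ all three polynomials are positive with $q_{k+1}/q_k \ge x - b_k$ and $q_k/q_{k-1}$ bounded below in a way that, after substituting into the quadratic, yields positivity exactly when $\xi_k$ is at most the stated value. The $\xi_k = 1$ refinement then follows by noting that the hypotheses $x > b_k - 2a_k$, $x_{kk}\ge b_k$ force $(x-b_k)^2 < 4a_k^2$ in the relevant range (or the sign argument covers the rest), making the discriminant negative.

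For sharpness: given the prescribed data $a_k, b_k, x_{1k}, x_{kk}$ with $b_k < x_{kk}$, I would invoke Theorem \ref{wendr} to build a monic orthogonal family whose $k$-th and $(k-1)$-th polynomials have $x_{kk}$ and a suitable interlacing point as zeros, with $a_k, b_k$ as the recurrence coefficients at level $k$. By choosing the $(k-1)$-th polynomial's largest zero close to $x_{kk}$, one makes $q_{k-1}(x)/q_k(x)$ at a point $x$ slightly beyond $x_{kk}$ as large as desired; plugging into the quadratic form, the term $-\xi_k a_k^2 q_{k-1}^2$ dominates once $\xi_k$ exceeds $4a_k^2/(4a_k^2 + (x_{kk}-b_k)^2)$, giving a point where the expression is negative. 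The main obstacle I anticipate is the bookkeeping in the sufficiency half: getting the lower bound on $q_k/q_{k-1}$ (equivalently, controlling where $q_k^2 - \xi_k(x-b_k)q_kq_{k-1} - \xi_k a_k^2 q_{k-1}^2$ could first vanish) tight enough to match the threshold, rather than a weaker constant — this is where one must use that $x_{kk}$ is genuinely the largest zero and not merely an upper bound, and where the case analysis around the zeros of $q_k$ versus the far field has to be stitched together carefully. The symmetric $b_k$-vanishing case can be used as a sanity check, where the threshold reduces to the known $4a_k^2/(4a_k^2 + x_{kk}^2)$.
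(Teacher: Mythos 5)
Your high-level strategy (reduce to a quadratic in the ratio $t_k=q_{k-1}/q_k$ via the recurrence, get the easy range from the discriminant, and use Theorem \ref{wendr} for sharpness) is the same as the paper's, but the sufficiency half has a genuine gap at exactly the point you flag as your ``main obstacle,'' and it is not mere bookkeeping. For $x$ outside the interval where the discriminant of the quadratic is negative (that interval is $|x-b_k|<2a_k/\sqrt{\xi_k}$, not the set where $\xi_k<4a_k^2/(4a_k^2+(x-b_k)^2)$ -- your discriminant computation is garbled, partly because you took the recurrence (\ref{monicrec}) with the $+a_k^2$ sign, which makes the quadratic form in $(q_k,q_{k-1})$ indefinite for every $\xi_k>0$), the inequality $(x-b_k)^2<4a_k^2+(x_{kk}-b_k)^2$ that you propose to use simply fails for large $x$, and the sign argument near the zeros of $q_k$ does not reach beyond $x_{k+1,k+1}$. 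The indispensable input, which you never state, is the interlacing bound $0<t_k(x)<1/(x-x_{kk})$ for $x>x_{kk}$; writing $t_k=\epsilon/(x-x_{kk})$ with $\epsilon\in(0,1)$ and substituting into $\xi_k a_k^2t_k^2-\xi_k(x-b_k)t_k+1$ with $\xi_k=4a_k^2/(4a_k^2+h^2)$, $h=b_k-x_{kk}$, yields after completing the square the identity
\begin{equation*}
F(x,t_k)=\frac{\bigl(2a_k^2\epsilon+(x-x_{kk})h\bigr)^2+4a_k^2(x-x_{kk})^2(1-\epsilon)}{(x-x_{kk})^2(4a_k^2+h^2)}>0,
\end{equation*}
and it is this algebraic identity, not a monotonicity estimate, that produces the exact threshold in (\ref{bestxi}). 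Your $\xi_k=1$ refinement has the same defect: $x>b_k-2a_k$ certainly does not force $(x-b_k)^2<4a_k^2$ on the range $x\ge b_k+2a_k$, where a separate estimate using $t_k=\epsilon/(x-x_{kk})$ and $x_{kk}\ge b_k$ is required.

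The sharpness half is closer to the mark: Wendroff's theorem plus pushing the subdominant zeros so that $t_k$ approaches its extremal value $1/(x-x_{kk})$ is exactly the paper's device. But note that $q_{k-1}/q_k$ cannot be made ``as large as desired'' for $x$ slightly beyond $x_{kk}$ -- it is capped by $1/(x-x_{kk})$ -- and with the correct sign the term $+\xi_k a_k^2 q_{k-1}^2$ is positive, so nothing ``dominates''; one must instead evaluate $F$ at a judiciously chosen point (the paper takes $x=x_{kk}-2a_k^2\epsilon/h$) to force $F<0$ once $\xi_k$ exceeds the threshold. As written, both halves of your argument stop short of the computations that carry the proof.
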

\begin{proof}
We will give a proof for the case where the largest zero $x_{kk}$ is involved, the second one is similar.\\
We set
$t_k =t_k(x)= q_{k-1}(x)/q_k(x),$ and $h=b_k-x_{kk}.$
Using (\ref{monicrec})  we find
$$F(x,t_k)= \frac{q_k^2(x)-\xi_k q_{k-1}(x)  q_{k+1}(x)}{q_k^2(x)}=  \xi_k a_k^2 t_k^2-
\xi_k (x-b_k)t_k+1.$$
First notice that this quadratic
in $t_k$ is positive for
$$b_k -\frac{2a_k}{\sqrt{\xi_k}} <x< b_k +\frac{2a_k}{\sqrt{\xi_k}} \, .$$
Hence it will be enough to prove the claim for $x$ beyond this interval.
By
$x_{ik} < x_{i,k-1}<x_{i+1,k}$ we have
$$t_k = \frac{1}{x-x_{kk}} \, \prod_{i=1}^{k-1} \frac{x-x_{i,k-1}}{x-x_{i k}}
< \frac{1}{x-x_{kk}}\, , \; \; \; x> x_{kk}.$$
Hence for $x > x_{kk}$ we can set
$t_k= \frac{\epsilon}{x-x_{k,k}} \, ,$
where $ \epsilon =\epsilon(x) \in (0,1).$

For $\xi_k =\frac{4a_k^2}{4a_k^2+(x_{kk}-b_k)^2}= \frac{4a_k^2}{4a_k^2+h^2} \, $ we have
$$x_{kk}< b_k +\frac{2a_k}{\sqrt{\xi_k}} = b_k+\sqrt{4a_k^2+h^2} \le x,$$

and one finds
$$F(x,t_k)= \frac{4 a_k^4 \epsilon^2}{(x-x_{kk})^2 (4a_k^2+h^2)}-
\frac{4a_k^2 (x - b_k) \epsilon}{(x-x_{kk})(4a_k^2+h^2)} +1=$$

$$\frac{\left( 2a_k^2 \epsilon-(x+x_{kk})h \right)^2+
4a_k^2(x-x_{kk})^2 (1- \epsilon)}{(x-x_{kk})^2 (4a_k^2+h^2)} >0, $$
 and (\ref{bestxi}) follows.

Next we shall prove that for $x >b_k- 2a_k,$ that is for $h \ge 0,$ and $x_{kk} \ge b_k,$ one can take $\xi_k =1.$
Setting
$x =b_k+ \delta,$ where $\delta \ge 2a_k \, ,$ we obtain

$$F(x,t_k) =1- \frac{\epsilon (\delta^2+\delta h-a_k^2 \epsilon)}{(\delta+h)^2} >
1- \frac{\delta^2+\delta h-a_k^2 }{(\delta+h)^2} =\frac{a_k^2}{(\delta+h)^2}+ \frac{h}{\delta+h} \ge 0.$$

Let us show that the above choice of $\xi_k$ in (\ref{bestxi}) is the best possible, provided
$x_{kk} >b_k.$ By Wendroff's theorem for any fixed
$k,\; a_k >0, \; b_k, \; x_{1,k}< x_{kk},$
there is a corresponding orthogonal polynomial with these parameters
such that
the product
$$\prod_{i=1}^{k-1} \frac{x-x_{i,k-1}}{x-x_{i k}}  $$
is arbitrarily close to one for $x>x_{kk}$. Hence we can set
$t_k =\frac{\epsilon}{x-x_{kk}} \, $ with $\epsilon $ arbitrarily close to one.
Given a $\xi_k,$ we find
$$F(x,t_k)=1- \frac{\epsilon\left( (x-b_k)(x-x_{kk})-
a_k^2 \epsilon \right)}{(x-x_{kk})^2} \, \xi_k .$$
By the assumption $x_{kk}>b_k,$ hence $h<0$ and we choose $x=x_{kk}- \frac{2a_k^2 \epsilon}{h}\,  $ obtaining

$$F(x,t_k)=1- \frac{4a_k^2 \epsilon+h^2}{4a_k^2}\, \xi_k .$$
This implies $$\xi_k <\frac{4a_k^2}{4a_k^2 \epsilon+h^2} \, ,$$
assuming that $F(x,t_k)$ is positive
and the result follows.
\end{proof}
In connection with this theorem it's worth noticing that $x_{1k} \le b_{k-1} \le x_{kk}$ for any orthogonal polynomial. Indeed, let $J_k$ be the truncated Jacobi matrix (see
(\ref{jacmatrix}) below). Then for $0 \le i \le k-1$ we have
$$ x_{1k}=\inf_{||v||=1} (J_k v,v) \le (J_ke_i,e_i) =b_i \le \sup_{||v||=1} (J_k v,v)=x_{kk},$$
where $(.,.)$ and $||.||$ denote the scalar product and the norm in $\mathbb{C}^k$ and $e_i$ is a vector in $\mathbb{C}^k$ with zero entries except for $i$th which is equal to $1.$

Arguments similar to these above are readily applicable to higher order \Tu inequalities of the form
\begin{equation}
\label{highordtur}
p_k^2 +\sum_{i=1}^m \xi_k^{(i)} p_{k-i}p_{k+i} >0.
\end{equation}
Indeed, one can start with the usual \Tu inequality and choose sufficiently small $\xi_k^{(i)}$
for $i \ge 2.$ However this seems rather misleading since such a proof suggests very small values of $\xi_k^{(i)},$ whereas in fact they can grow exponentially e.g. for Hermite polynomials, as the following result of Jensen shows \cite{jensen}.

A higher order generalization of the Laguerre inequality
  $f'^2-f f'' >0$ which holds, in particular, for the functions of the so-called Polya-Laguerre class has the following form (for modern exposition see e.g. \cite{patrick1}, \cite{patrick2}).
  \begin{equation}
  \label{jen}
  \mathcal{L}_{2m}(f)=\frac{1}{2} \,\sum_{j=0}^m (-1)^{j+m} {2m \choose j} f^{(j)}f^{(2m-j)} \ge 0, \; \;, m=0,1,...
  \end{equation}
  In particular
$$\mathcal{L}_2(f)=f'^2-f f'',$$
  $$\mathcal{L}_4(f)=3f''^2-4f'f'''+f f^{(4)}.$$
    By analogue one can introduce
corresponding Tur\'{a}n type operators which will be considered in this paper. The first of
of them is just the standard \Tu one
\begin{equation}
  \label{t2}
 T_2 (p_k)=p_k^2-p_{k-1}p_{k+1},
\end{equation}
and the second is
\begin{equation}
  \label{t4}
T_4 (p_k)=3 p_k^2-4p_{k-1}p_{k+1}+ p_{k-2}p_{k+2}.
\end{equation}

To justify the form of two following operators $S_2$ and $S_4$ which will be considered in the sequel (the last will be treated for symmetric polynomials only ) we need some explanations.
It was noticed in \cite{ismail1} that in the monic normalization among the expressions of the form $p_k^2- \xi p_{k-1}p_{k+1},$
the polynomial $p_k^2-p_{k-1}p_{k+1}$ has the minimal possible degree.
Allowing an arbitrarily normalization defined by (\ref{grec}) let us consider the following polynomials:
\begin{equation}
\label{t2g}
p_k^2-\xi_k p_{k-1}p_{k+1},
\end{equation}
\begin{equation}
\label{t4g}
p_k^2-\mu_k p_{k-1}p_{k+1} + \nu_k p_{k-2}p_{k+2}.
\end{equation}
Notice that for $b_i \not\equiv 0,$
$$p_k(x)= \left(x^k-x^{k-1} \sum_{i=0}^{k-1}b_i+O(x^{k-2} ) \right)\prod_{i=1}^k \frac{c_i}{a_i} \,,  $$
whereas in the symmetric case $b_i \equiv 0,$
we have
$$p_k (x)= \left( x^k-x^{k-2} \sum_{i=1}^{k-1} a_i^2 +O(x^{k-4})\right)\prod_{i=1}^k \frac{c_i}{a_i} \, .$$
Simple calculations readily yield
that the degree of (\ref{t2g}) is minimal and equal to $2k-1$ if at least one of $b_i \ne 0, \; i=0,1,...,k-1,$
and $2k-2$ if $b_k  \equiv 0,$ for
$$\xi_k = \frac{c_k }{c_{k+1}} \cdot \frac{a_{k+1}}{a_k}\, .$$
The minimum of the degree of  (\ref{t4g}) is $2k-4$ in the symmetric case and is attained for
\begin{equation*}
\label{mu}
\mu_k  = \frac{c_k }{c_{k+1}} \cdot \frac{a_{k+1}(a_{k+1}^2+a_k^2-a_{k-1}^2-a_{k-2}^2)}{a_k (a_{k+1}^2-a_{k-2}^2)} \, ,
\end{equation*}
$$\nu_k = \frac{c_k c_{k-1}}{c_{k+2} c_{k+1}} \cdot
\frac{a_{k+2}a_{k+1} (a_k^2-a_{k-1}^2)}{a_k a_{k-1}(a_{k+1}^2-a_{k-2}^2)} \, .$$
This suggests to define the following two operators:
\begin{equation}
\label{s2def}
S_2 (p_k)=p_k^2- \frac{c_k }{c_{k+1}} \cdot \frac{a_{k+1}}{a_k} \, p_{k-1}p_{k+1} \, ,
\end{equation}
and
\begin{equation}
\label{s4def}
S_4 (p_k)=p_k^2- \frac{c_k }{c_{k+1}} \cdot \frac{a_{k+1}(a_{k+1}^2+a_k^2-a_{k-1}^2-a_{k-2}^2)}{a_k (a_{k+1}^2-a_{k-2}^2)} \,p_{k-1}p_{k+1}+
\end{equation}
$$\frac{c_k c_{k-1}}{c_{k+2} c_{k+1}} \cdot
\frac{a_{k+2}a_{k+1} (a_k^2-a_{k-1}^2)}{a_k a_{k-1}(a_{k+1}^2-a_{k-2}^2)} \,p_{k-2}p_{k+2}.$$
Notice that $S_2(p_k)=T_2(p_k)$ for monic polynomials.

Some additional motivation comes from the fact that the inequalities $S_2 >0$ and $S_4 >0$
are invariant with respect to normalization. Namely, for $p_k=d_k {\bf p}_k$ we have
$$S_2(p_k)=
d_k^2 S_2({\bf p}_k); \; \; S_4(p_k )=
d_k^2 S_4({\bf p}_k).$$
Thus for $S_2$ and $S_4$ it will be enough to consider the orthonormal case only.

Although we will not use it here, let us notice that for general asymmetric polynomials
the operator $S_4$ minimizing the degree of the output is defined by
$$\mu_k  = \frac{a_{k+1}c_k (b_{k+1}+b_k-b_{k-1}-b_{k-2})}{a_k c_{k+1}(b_{k+1}-b_{k-2})} \; ,$$
$$\nu_k  =
\frac{a_{k+2}a_{k+1}c_k c_{k-1}(b_{k}-b_{k-1})}{a_k a_{k-1}c_{k+2}c_{k+1}(b_{k+1}-b_{k-2})} \, .$$

\section{Tur\'{a}n's inequalities. Results}
\label{sec3}
We start with with the usual Tur\'{a}n inequality $T_2 (p_k) > 0.$

\begin{theorem}
\label{turgenn} Suppose that
\begin{equation}
\label{uslc1}
4\left( a_{i+1} \frac{c_{i}}{c_{i+1}}-a_{i} \right)\left( a_{i}-a_{i-1} \frac{
c_{i-1}}{c_i} \right) > (b_{i}-b_{i-1})^2,
\end{equation}
and
\begin{equation}
\label{uslc2}
\frac{a_{i+1}}{a_i} > \frac{c_{i+1}}{c_i}, \; \; i=1, \ldots ,k;
\end{equation}
then
$$T_2 (p_k) > 0.$$
\end{theorem}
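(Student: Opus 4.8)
The plan is to reduce $T_2(p_k)>0$ to an assertion about monic polynomials and then invoke the quantitative Tur\'{a}n inequality already established in Theorem \ref{prop1}. Since $T_2$ is defined in terms of $p_k^2 - p_{k-1}p_{k+1}$ and the remark after \eqref{s2def} records that $S_2(p_k)=T_2(p_k)$ only in the monic normalization, I would first translate the general recurrence \eqref{grec} into its monic form \eqref{monicrec}. Writing $p_k = d_k q_k$ with $q_k$ monic, one has $d_k = \prod_{i=1}^k (c_i/a_i)$, and the monic coefficients become $b_k$ (unchanged) and $a_k^2 \mapsto a_k c_k \cdot a_{k-1} c_{k-1}$ in the notation of \eqref{grec}; denote the monic recurrence parameter by $A_k^2 := a_k c_{k-1}\,\frac{a_k}{c_k}$ — more carefully, comparing \eqref{grec} with \eqref{monicrec} after clearing $d_k$, the monic lower coefficient is $A_k^2 = a_k c_k \cdot \frac{d_{k-1}}{d_{k+1}}\big/\text{(stuff)}$, so I would pin down $A_k$ explicitly in the first step. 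Then $T_2(p_k) = p_k^2 - p_{k-1}p_{k+1} = d_k^2\big(q_k^2 - \tfrac{d_{k-1}d_{k+1}}{d_k^2} q_{k-1}q_{k+1}\big)$, and since $d_{k-1}d_{k+1}/d_k^2 = \frac{a_k c_{k+1}}{c_k a_{k+1}} < 1$ exactly under hypothesis \eqref{uslc2}, the inequality $T_2(p_k)>0$ follows from the monic Tur\'{a}n inequality \eqref{monicturan} with $\xi_k = d_{k-1}d_{k+1}/d_k^2 < 1$.

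Next I would apply Theorem \ref{prop1}: for monic $q_k$ with the computed coefficients $A_k, b_k$, inequality \eqref{monicturan} holds with the value $\xi_k$ from \eqref{bestxi}. It therefore suffices to show that our $\xi_k = \frac{a_k c_{k+1}}{c_k a_{k+1}}$ is at most $\frac{4A_k^2}{4A_k^2 + (x_{kk}-b_k)^2}$ (and similarly with $x_{1k}$) for the relevant range of $x$ — or, more robustly, to bound $(x_{kk}-b_k)^2$ and $(x_{1k}-b_k)^2$ directly and verify the resulting numerical inequality. Here the standard estimate $x_{1k} \le b_{k-1} \le x_{kk}$ noted after Theorem \ref{prop1}, together with the interlacing-type bound $|x_{kk}-b_k|, |x_{1k}-b_k| \le |b_{k-1}-b_k| + (\text{something}\cdot A_k\text{-type terms})$, would be the tool; more precisely one wants $\max((x_{1k}-b_k)^2,(x_{kk}-b_k)^2)$ bounded so that $\xi_k$ as given exceeds the $T_2$-threshold. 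This is where hypothesis \eqref{uslc1} enters: the quantity $4(a_{i+1}c_i/c_{i+1} - a_i)(a_i - a_{i-1}c_{i-1}/c_i)$ is precisely (four times) the product of the "gaps" in the monic sequence $A_{k+1}^2 - A_k^2$-style differences, and requiring it to exceed $(b_i - b_{i-1})^2$ is exactly the condition ensuring the quadratic $F(x,t_k)$ from the proof of Theorem \ref{prop1} stays positive across the whole real line rather than only outside an interval.

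An alternative, and probably cleaner, route avoids quoting \eqref{bestxi} and instead mimics its proof directly for the normalized polynomials. Set $t_k = p_{k-1}(x)/p_k(x)$; using \eqref{grec} one gets $T_2(p_k)/p_k^2 = \frac{a_{k+1}c_k}{a_k c_{k+1}}(x - b_k)t_k\cdot(\text{sign bookkeeping})$ — concretely a quadratic $\alpha_k t_k^2 - (x-b_k)t_k + 1$ in $t_k$ after normalizing, whose discriminant is $(x-b_k)^2 - 4\alpha_k$ with $\alpha_k$ the product of consecutive monic-type coefficients. One then splits into the range where the discriminant is negative (immediate positivity) and the range $x \ge x_{kk}$ (or $x \le x_{1k}$), where the interlacing bound $t_k < 1/(x-x_{kk})$ lets one substitute $t_k = \epsilon/(x-x_{kk})$, $\epsilon\in(0,1)$, and complete the square as in Theorem \ref{prop1}. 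The main obstacle, in either route, is the bookkeeping: correctly identifying the monic coefficient products and then showing that \eqref{uslc1}–\eqref{uslc2} force the discriminant-and-interlacing argument to close with the \emph{unnormalized} factor $\frac{a_{k+1}c_k}{a_k c_{k+1}}$ in place of a free small parameter $\xi_k$. Condition \eqref{uslc2} guarantees this factor is $<1$ (so positivity holds near $\pm\infty$ and near the zeros of $p_k$, where $p_{k-1}$ and $p_{k+1}$ have opposite signs), while \eqref{uslc1} — being a discrete "concavity gap" condition of the same shape as the $4a_k^2 \ge (\text{gap})^2$ comparisons — supplies exactly the slack needed to push positivity through the intermediate range of $x$. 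I would expect the write-up to spend most of its length verifying that these two hypotheses are the precise translation of "discriminant $\le 0$ on the gap and the completed square is a sum of squares."
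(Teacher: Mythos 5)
There is a genuine gap, and it is structural rather than a matter of bookkeeping. Your reduction to the monic normalization is fine: with $d_k=\prod_{i=1}^k c_i/a_i$ the monic recurrence coefficient is simply $a_{k-1}^2$, and $T_2(p_k)=d_k^2\bigl(q_k^2-\xi_k q_{k-1}q_{k+1}\bigr)$ with $\xi_k=\frac{a_kc_{k+1}}{c_ka_{k+1}}<1$ under (\ref{uslc2}). But the plan then asks Theorem \ref{prop1} (or a re-run of its proof) to certify this $\xi_k$, and that cannot work: to get positivity for \emph{all} $x$ from (\ref{bestxi}) you need $\xi_k\le \frac{4a_k^2}{4a_k^2+\max\left((x_{kk}-b_k)^2,(x_{1k}-b_k)^2\right)}$, i.e.\ $\max\bigl((x_{kk}-b_k)^2,(x_{1k}-b_k)^2\bigr)\le 4a_k\bigl(a_{k+1}\tfrac{c_k}{c_{k+1}}-a_k\bigr)$, and nothing in (\ref{uslc1})--(\ref{uslc2}) forces this. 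Hypothesis (\ref{uslc1}) bounds the increments $(b_i-b_{i-1})^2$, not the distance from $b_k$ to the extreme zeros; already in the symmetric Hermite-type situation $(x_{kk}-b_k)^2\approx 4a_k^2$, so you would need $a_{k+1}c_k/c_{k+1}\ge 2a_k$, which (combined with the positivity of the other factor in (\ref{uslc1})) is impossible. Moreover, Theorem \ref{prop1} itself shows, via Wendroff's theorem, that its threshold is \emph{sharp} given only the level-$k$ data and interlacing — so any argument (including your ``alternative route'' that substitutes $t_k=\epsilon/(x-x_{kk})$ with $\epsilon\in(0,1)$) which uses only $a_k,b_k$ and the crude interlacing bound on $t_k$ is provably unable to reach a $\xi_k$ as large as $\frac{a_kc_{k+1}}{c_ka_{k+1}}$. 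The whole point of Theorem \ref{turgenn} is that (\ref{uslc1}) is imposed for every $i=1,\ldots,k$, and a single-step reduction discards that information.

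The missing idea is an induction on $k$. The paper's proof is two lines: $T_2(p_0)=1$, together with the identity (\ref{idenk}), which writes $4c_{k+1}(a_{k+2}c_{k+1}-a_{k+1}c_{k+2})\bigl(a_{k+2}T_2(p_{k+1})-a_kc_kc_{k+2}T_2(p_k)\bigr)$ as a perfect square in $p_{k+1},p_k$ plus $c_{k+1}c_{k+2}\bigl\{4(a_{k+2}c_{k+1}-a_{k+1}c_{k+2})(a_{k+1}c_{k+1}-a_kc_k)-(b_{k+1}-b_k)^2c_{k+1}c_{k+2}\bigr\}p_k^2$. Condition (\ref{uslc2}) makes the prefactor positive and (\ref{uslc1}) at index $k+1$ makes the brace nonnegative, so $T_2(p_{k+1})>0$ follows from $T_2(p_k)>0$; this is how the hypotheses at \emph{all} indices get used, and it also explains the sum-of-squares expansion $T_2(p_k)=\sum_i(\cdots)p_i^2$ recorded later in the paper. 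Your instinct that (\ref{uslc1}) is a ``discriminant $\le 0$'' condition is right in spirit, but it is the discriminant of the quadratic form in $(p_{k+1},p_k)$ appearing in the inductive step, not a condition locating the extreme zeros relative to $b_k$.
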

\begin{proof}
The claim follows from $T_2(p_0)=1, $ and the identity
\begin{equation}
\label{idenk} 4c_{k+1}\left(a_{k+2}c_{k+1}-a_{k+1}c_{k+2} \right)
\left( a_{k+2} T_2(p_{k+1})-a_k c_k c_{k+2} T_2(p_k)\right)=
\end{equation}
$$
\left\{2
(a_{k+2}c_{k+1}-a_{k+1}c_{k+2})p_{k+1}+(b_{k+1}-b_k)c_{k+1}c_{k+2}p_k
\right\}^2+
$$
$$
c_{k+1}c_{k+2}
\left\{4(a_{k+2}c_{k+1}-a_{k+1}c_{k+2})(a_{k+1}c_{k+1}-a_k
c_k)-(b_{k+1}-b_k)^2c_{k+1}c_{k+2} \right\}p_k^2 .
$$
\end{proof}
Finding an appropriate sequence $c_k$ in Theorem \ref{turgenn} may be far from obvious.
On the other hand, rather simple sufficient conditions can be obtained by a suitable choice of $c_k.$ For example, choosing $c_1=1$ and
\begin{equation}
\label{eqnormaliz}
c_k=2^{k-1} \prod_{j=1}^{k-1}\left(\frac{a_{j+1}}{a_j}+ \frac{a_j}{a_{j+1}}\right)^{-1},\, \; k=2,3,\ldots ,
\end{equation}
one finds
\begin{corollary}
\label{colloropt}
Suppose that $a_i$ is increasing.
Let
\begin{equation}
\label{eqsw1}
2(a_2^2-a_1^2)>(b_1-b_0)^2,
\end{equation}
and
\begin{equation}
\label{eqsw2}
a_k^{-2} (a_{k+1}^2-a_k^2)(a_k^2-a_{k-1}^2 )> (b_k-b_{k-1})^2, \; \; k=2,3, \ldots ,
\end{equation}
then $T_2(p_k) >0.$
\end{corollary}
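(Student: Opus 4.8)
The plan is to derive Corollary \ref{colloropt} directly from Theorem \ref{turgenn} by checking that the normalization (\ref{eqnormaliz}) makes hypotheses (\ref{uslc1}) and (\ref{uslc2}) reduce to (\ref{eqsw1}) and (\ref{eqsw2}). First I would compute the ratio $c_{k}/c_{k+1}$ from the explicit product formula. From (\ref{eqnormaliz}) one has, for $k\ge 1$,
$$\frac{c_{k}}{c_{k+1}}=\frac{1}{2}\left(\frac{a_{k+1}}{a_k}+\frac{a_k}{a_{k+1}}\right),$$
with the convention handling $k=1$ via $c_1=1$. In particular $c_{k}/c_{k+1}\ge 1$ since $t+t^{-1}\ge 2$, and the inequality is strict because $a_i$ is strictly increasing, so $a_{k+1}/a_k\neq 1$. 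This immediately gives $a_{k+1}/a_k \ge c_{k+1}/c_k$, i.e. condition (\ref{uslc2}); again strictness follows from strict monotonicity of $a_i$.

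Next I would substitute this ratio into the two factors appearing on the left of (\ref{uslc1}). A short computation gives
$$a_{i+1}\frac{c_i}{c_{i+1}}-a_i=\frac{a_{i+1}}{2}\left(\frac{a_{i+1}}{a_i}+\frac{a_i}{a_{i+1}}\right)-a_i=\frac{a_{i+1}^2-a_i^2}{2a_i},$$
and similarly, using $c_{i-1}/c_i=\tfrac12(a_i/a_{i-1}+a_{i-1}/a_i)$,
$$a_i-a_{i-1}\frac{c_{i-1}}{c_i}=a_i-\frac{a_{i-1}}{2}\left(\frac{a_i}{a_{i-1}}+\frac{a_{i-1}}{a_i}\right)=\frac{a_i^2-a_{i-1}^2}{2a_i}.$$
Multiplying these and the factor $4$, condition (\ref{uslc1}) becomes exactly $a_i^{-2}(a_{i+1}^2-a_i^2)(a_i^2-a_{i-1}^2)>(b_i-b_{i-1})^2$, which is (\ref{eqsw2}) for $i\ge 2$. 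For $i=1$ the same algebra, now with $c_0$ implicitly giving $a_1 c_0/c_1 = a_0 = 0$ (recall $a_0$ is set to zero), collapses the second factor to $a_1$, so the left side becomes $4\cdot\frac{a_2^2-a_1^2}{2a_1}\cdot a_1 = 2(a_2^2-a_1^2)$, i.e. (\ref{eqsw1}). Hence Theorem \ref{turgenn} applies and $T_2(p_k)>0$.

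The main obstacle, and really the only delicate point, is bookkeeping at the boundary index $i=1$: one must check that the formula $c_k/c_{k+1}=\tfrac12(a_{k+1}/a_k+a_k/a_{k+1})$ together with the stated value $c_1=1$ is consistent with, and correctly specializes, the factor $a_1 - a_0 c_0/c_1$ in (\ref{uslc1}) when $a_0=0$, so that (\ref{eqsw1}) (which carries a $2$ rather than matching the general pattern) is precisely what is needed. Everything else is the routine substitution above; no estimates beyond the elementary $t+t^{-1}\ge 2$ (with equality iff $t=1$) are required.
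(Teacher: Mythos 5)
Your proposal is correct and is exactly the derivation the paper intends: Corollary \ref{colloropt} is obtained by substituting the normalization (\ref{eqnormaliz}), i.e. $c_k/c_{k+1}=\tfrac12\left(a_{k+1}/a_k+a_k/a_{k+1}\right)$, into the hypotheses of Theorem \ref{turgenn}, whereupon (\ref{uslc1}) collapses to (\ref{eqsw2}) for $i\ge 2$ and to (\ref{eqsw1}) for $i=1$ (using $a_0=0$), and (\ref{uslc2}) follows from $a_{i+1}/a_i>1\ge c_{i+1}/c_i$. The paper leaves this computation implicit, and your bookkeeping at $i=1$ is the right point to be careful about.
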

For growing $a_k$ the choice of $c_k$ given by (\ref{eqnormaliz}) is close to the best possible.
Indeed, since both factors in the left hand side of (\ref{uslc1}) must be positive, this yields
$$\frac{a_i}{a_{i+1}}< \frac{c_i}{c_{i+1}} \, , \; \; i=1,2, \ldots , k; \; \; \;
\frac{a_{i-1}}{a_i}< \frac{c_i}{c_{i-1}} \,, \; \; i =2, \ldots ,k .$$
Hence,
$$4 \left( a_{i+1} \frac{c_{i}}{c_{i+1}}-a_{i} \right)\left(a_{i}-a_{i-1} \frac{
c_{i-1}}{c_i} \right) < 4a_{i+1}a_{i-1}\left( \frac{a_{i+1}}{a_i} -
\frac{a_i}{a_{i+1}} \right)\left(\frac{a_i}{a_{i-1}} -\frac{a_{i-1}}{a_i} \right) =$$
$$4 a_i^{-2} (a_{i+1}^2-a_i^2)(a_i^2-a_{i-1}^2), $$
for $i \ge 2.$ Similarly, for $i=1$ the left hand side of (\ref{uslc1}) is less than $4(a_2^2-a_1^2).$

In fact, one can say even more.
The following nice observation is not mine, but I was unable to find out who made it.
Basic facts on chain sequences can be found in \cite{chih} and \cite{ismailbook}.
\begin{theorem}
\label{chain}
The assumptions of Theorem \ref{turgenn} are fulfilled if and only if the sequence $a_i$ is increasing and
$$\frac{a_i^2(b_i-b_{i-1})^2}{4(a_{i+1}^2-a_i^2)(a_i^2-a_{i-1}^2)} \,, \; \; \; i=1,\ldots , k,$$
is a chain sequence.
\end{theorem}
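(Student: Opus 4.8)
The plan is to prove the equivalence by analyzing condition \eqref{uslc1} directly, using the observation from Corollary \ref{colloropt} that the optimal normalization $c_k$ given by \eqref{eqnormaliz} essentially saturates the inequality. First I would recall the definition: a positive sequence $\{g_i\}$ is a \emph{chain sequence} if there exists a sequence $\{m_i\}$ with $0 \le m_0 < 1$, $0 < m_i < 1$ for $i \ge 1$, such that $g_i = (1-m_{i-1})m_i$. The content of Theorem \ref{turgenn} is the existence of \emph{some} positive normalizing sequence $\{c_k\}$ satisfying \eqref{uslc1} and \eqref{uslc2}; the claim is that this existence is equivalent to $\{a_i\}$ being increasing together with the chain-sequence condition on
$$g_i = \frac{a_i^2(b_i-b_{i-1})^2}{4(a_{i+1}^2-a_i^2)(a_i^2-a_{i-1}^2)}.$$

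The key step is to make the substitution that turns \eqref{uslc1} into the chain-sequence recurrence. Write $u_i = a_{i+1}c_i/c_{i+1} - a_i$ and $v_i = a_i - a_{i-1}c_{i-1}/c_i$; condition \eqref{uslc2} together with positivity of both factors of \eqref{uslc1} forces $u_i, v_i > 0$, which as shown in the discussion after Corollary \ref{colloropt} already implies $\{a_i\}$ is increasing. Next, observe that $u_i$ and $v_{i+1}$ are linked: from the definitions, $a_{i+1}c_i/c_{i+1} = a_i + u_i$ and $a_i c_i/c_{i+1} = a_{i+1} - v_{i+1}$, so dividing gives $a_{i+1}/a_i = (a_i+u_i)/(a_{i+1}-v_{i+1})$, i.e. $a_{i+1}(a_{i+1}-v_{i+1}) = a_i(a_i+u_i)$, hence $v_{i+1} = (a_{i+1}^2 - a_i^2 - a_i u_i)/a_{i+1}$. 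Introducing the normalized variable $m_i := a_i u_i/(a_{i+1}^2 - a_i^2) \in (0,1)$ — where the upper bound $m_i < 1$ is exactly $v_{i+1} > 0$ — one computes that $1 - m_i = a_{i+1} v_{i+1}/(a_{i+1}^2-a_i^2)$, and therefore
$$(1-m_{i-1})m_i = \frac{a_i v_i}{a_i^2-a_{i-1}^2}\cdot\frac{a_i u_i}{a_{i+1}^2-a_i^2} = \frac{a_i^2\, u_i v_i}{(a_{i+1}^2-a_i^2)(a_i^2-a_{i-1}^2)}.$$
Condition \eqref{uslc1} says precisely $u_i v_i > (b_i - b_{i-1})^2/4$, which is equivalent to $(1-m_{i-1})m_i > g_i$. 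Conversely, given a chain sequence majorizing $\{g_i\}$, one reverses the substitution: a standard fact about chain sequences is that if $\{g_i\}$ is a chain sequence then so is any sequence dominated term-by-term (and strictly, one can arrange strict parameters), so one picks parameters $m_i \in (0,1)$ with $(1-m_{i-1})m_i > g_i$, defines $u_i = m_i(a_{i+1}^2-a_i^2)/a_i$, and builds $c_k$ recursively from $c_1 = 1$ via $c_{i+1}/c_i = a_{i+1}/(a_i + u_i)$; positivity of the $c_k$ and of both factors in \eqref{uslc1} follows, and \eqref{uslc2} follows from $c_{i+1}/c_i = a_{i+1}/(a_i+u_i) < a_{i+1}/a_i$.

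The main obstacle is handling the boundary behaviour of the chain-sequence parameters carefully: chain sequences are defined with closed conditions ($m_i$ allowed to equal $0$ at the start, strict elsewhere, and the maximal/minimal parameter sequences touching the endpoints), whereas \eqref{uslc1} is a \emph{strict} inequality, so the translation "$\{g_i\}$ is a chain sequence $\Leftrightarrow$ there exist parameters with $(1-m_{i-1})m_i > g_i$" needs the comparison theorem for chain sequences (Wall's theorem: a sequence dominated by a chain sequence is a chain sequence) plus the remark that if $\{g_i\}$ is a chain sequence with $g_i$ not eventually forcing all the slack to zero, one gets strict parameters — in the finite-segment setting $i = 1,\dots,k$ this is automatic since one can always perturb. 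I would therefore state and use the comparison theorem explicitly, and note that for a \emph{finite} chain sequence the strict version is trivial, so no delicate limiting argument is actually needed here. The remaining verifications — that the constructed $c_k$ are positive and that \eqref{uslc2} holds — are routine algebra of the kind already displayed after Corollary \ref{colloropt}.
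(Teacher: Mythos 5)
Your proposal is correct and follows essentially the same route as the paper: your parameter $m_i = a_i u_i/(a_{i+1}^2-a_i^2)$ is exactly the paper's chain-sequence parameter $g_i = \frac{a_i}{a_{i+1}}\cdot\frac{c_i/c_{i+1}-a_i/a_{i+1}}{1-a_i^2/a_{i+1}^2}$, the forward direction is the same domination argument, and the converse is the same "a dominated sequence is a chain sequence, so recover the $c_i$" step. The only difference is that you spell out the strict-versus-nonstrict boundary issue and the explicit reconstruction of $c_i$, which the paper leaves implicit.
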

\begin{proof}
 The sequence $a_i$  must be increasing as (\ref{uslc1}) and  (\ref{uslc2}) imply
$$\frac{a_{i+1}}{a_i} > \frac{c_{i}}{c_{i+1}} \, , \; \; \frac{a_{i+1}}{a_i} > \frac{c_{i+1}}{c_i} \, .$$
Defining
$$g_i=\frac{a_i}{a_{i+1}} \cdot \frac{\frac{c_i}{c_{i+1}}-\frac{a_i}{a_{i+1}}}{1-\frac{a_i^2}{a_{i+1}^2}} < 1,$$
turns (\ref{uslc1}) into
$$(1-g_{i-1})g_i \ge  \frac{a_i^2(b_i-b_{i-1})^2}{4(a_{i+1}^2-a_i^2)(a_i^2-a_{i-1}^2)} >0 . $$
Hence
$$\frac{a_i^2(b_i-b_{i-1})^2}{4(a_{i+1}^2-a_i^2)(a_i^2-a_{i-1}^2)}$$ is a chain sequence.

In the opposite direction, if it is a chain sequence then any smaller one is also a chain sequence.  Therefore one can find $g_i$ and then the corresponding $c_i.$
\end{proof}
Since the sequence $1/2, \, 1/4, \, 1/4, \, ...$ is a chain sequence Theorem \ref{chain} readily yields Corollary \ref{colloropt}. However there are chain sequences with terms greater than $1/4,$ (see e.g. \cite{chenism},\cite{ismail},\cite{szwarc2},\cite{szwarc3}). In particular, a positive constant sequence $\{g\}_1^n$ is a chain sequence iff
$0 <g \le \frac{1}{4}\, \cos^{-2} \frac{\pi}{n+2} \, ,$ \cite[Th. 7.2.6]{ismailbook}.
\begin{example} Choosing in (\ref{grec})
$$a_k=q^{-2k} \sqrt{q (1-q^k)}, \; \;\; b_k=q^{-2k-1}(1+q-q^{k+1}), \; \; 0 <q <1, $$
one obtains so called Stieltjes-Wigert polynomials $S_k (x;q).$
The sequence $a_k$ is increasing and (\ref{eqsw1}) is fulfilled by
$$2(a_2^2-a_1^2)-(b_1-b_0)^2 =q^{-7}(1-q^2)(2-q-2q^2+2q^3) >0.$$
For (\ref{eqsw2}) we obtain
$$\frac{(1-q^k)q^{4k+3}}{(1-q)^2}\left(a_k^{-2} (a_{k+1}^2-a_k^2)(a_k^2-a_{k-1}^2 )- (b_k-b_{k-1})^2 \right)= $$
$$(1-q^2)^2(1-q^k)(1+q+q^2-q^{k+1}) +q^{3k+3} >0. $$
Hence in the normalization defined by (\ref{eqnormaliz}) we have $T_2(S_k (x;q))>0.$
\end{example}
\begin{example} The orthonormal Al-Salam-Carlitz polynomials ${\bf V}_k^{(\alpha)}(x;q)$ are defined by
$$a_k= q^{-k} \sqrt{\alpha q(1-q^k)}\, ,\; \; b_k=(\alpha+1)q^{-k}, \; \; 0 <q <1, \; \; \alpha >0.$$
The sequence $a_k$ is increasing and one finds
$$2(a_2^2-a_1^2)-(b_1-b_0)^2 =q^{-3} (1-q)(2 \alpha-(1+\alpha^2)(q-q^2)). $$
This expression is positive for
\begin{equation}
\label{eqalsalwig}
\frac{1-\sqrt{1-(q-q^2)^2}}{q-q^2} < \alpha < \frac{1+\sqrt{1-(q-q^2)^2}}{q-q^2} \, .
\end{equation}
For (\ref{eqsw2}) we obtain
$$\frac{(1-q^k)q^{2k+1}}{(1-q)^2}\left(a_k^{-2} (a_{k+1}^2-a_k^2)(a_k^2-a_{k-1}^2 )- (b_k-b_{k-1})^2 \right)=  $$
$$
(\alpha-q)(1-\alpha q)(1-q^k)+\alpha q^{2k+1}>0,
$$
provided
$ q < \alpha <q^{-1}.$
The last inequality is ever stronger than (\ref{eqalsalwig}) and we conclude that for the
Al-Salam-Carlitz polynomials normalized by (\ref{eqnormaliz}) and $ q < \alpha <q^{-1},$
$T_2(V_k^{(\alpha)}(x;q)) >0. $
\end{example}

In the symmetric case and the monic normalization the following result was obtained in \cite{krturan}:
 \begin{theorem}
\label{tht4symmon}
Suppose that polynomials $p_i$ satisfy
\begin{equation}
\label{symmon}
p_i=x p_{i-1}- a_{i-1}^2 p_{i-2}, \; \; p_1=0, \; p_0=1.
\end{equation}
Suppose further that $a_i$ are strictly increasing and
\begin{equation}
\label{symt4usl}
a_{i-1}^2-3 a_i^2+3 a_{i+1}^2-a_{i+2}^2 \ge 0, \; \;1 \le i \le k-1 .
\end{equation}
Then for $k \ge 2,$
\begin{equation}
\label{turan22}
T_4(p_k) > 0.
\end{equation}
\end{theorem}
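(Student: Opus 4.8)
The plan is to argue by induction on $k$, in the spirit of the proof of Theorem~\ref{turgenn}: one checks the base case $k=2$ directly, and then propagates the positivity of $T_4$ from $p_{k-1}$ to $p_k$ by means of a manifestly nonnegative algebraic identity --- the analogue of~(\ref{idenk}) --- in which the hypothesis~(\ref{symt4usl}) enters at a single index at each step.

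For $k=2$, substituting $p_0=1$, $p_1=x$, $p_2=x^2-a_1^2$, $p_3=x^3-(a_1^2+a_2^2)x$ and $p_4=x^4-(a_1^2+a_2^2+a_3^2)x^2+a_1^2a_3^2$ from~(\ref{symmon}) into~(\ref{t4}), and noting that the $x^4$-terms cancel since $3-4+1=0$, one obtains
$$T_4(p_2)=\bigl(a_0^2-3a_1^2+3a_2^2-a_3^2\bigr)x^2+a_1^2\bigl(3a_1^2+a_3^2\bigr).$$
Since $a_0=0$, the coefficient of $x^2$ is nonnegative by~(\ref{symt4usl}) with $i=1$, and the constant term is strictly positive; hence $T_4(p_2)>0$ on $\mathbb{R}$.

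For the inductive step I would look, for $k\ge 3$, for an identity of the form
$$c_k\bigl(A_k\,T_4(p_k)-B_k\,T_4(p_{k-1})\bigr)=\mathcal{S}_k^2+\bigl(a_{k-2}^2-3a_{k-1}^2+3a_k^2-a_{k+1}^2\bigr)\,\mathcal{R}_k^2,$$
with $c_k,A_k,B_k>0$ built from the $a_i$, $\mathcal{S}_k$ a linear combination of $p_{k-1}$ and $p_{k-2}$ with coefficients polynomial in $x$, and $\mathcal{R}_k$ proportional to $p_{k-1}$. The shape is forced by the degree count: the $x^{2k}$- and $x^{2k-1}$-terms of~(\ref{t4}) cancel, so $T_4(p_k)$ has degree $2k-2$ with leading coefficient exactly $a_{k-2}^2-3a_{k-1}^2+3a_k^2-a_{k+1}^2$; thus the third-difference term must carry the top degree while $\mathcal{S}_k$ may not involve $p_k$, after which the remaining coefficients are pinned down by expanding both sides via~(\ref{symmon}). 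Granting such an identity and~(\ref{symt4usl}) with $i=k-1$, the right-hand side is nonnegative, so $A_k\,T_4(p_k)\ge B_k\,T_4(p_{k-1})>0$ by the induction hypothesis; since $A_k>0$ this gives $T_4(p_k)>0$ on $\mathbb{R}$. Iterating from the base case, and using~(\ref{symt4usl}) only for $i=1,\dots,k-1$, proves the theorem.

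The main obstacle is discovering and verifying that identity, which is a sizeable symbolic computation. A convenient reduction is
$$T_4(p_k)=4\,T_2(p_k)-\bigl(p_k^2-p_{k-2}p_{k+2}\bigr),$$
which separates off the ordinary Tur\'{a}n expression~(\ref{t2}) --- for which one has the clean recursion $T_2(p_k)=a_{k-1}^2\,T_2(p_{k-1})+(a_k^2-a_{k-1}^2)\,p_{k-1}^2$, equivalently $T_2(p_k)=a_k^2\,p_{k-1}^2-a_{k-1}^2\,p_{k-2}p_k$ --- from the ``step-two'' expression $p_k^2-p_{k-2}p_{k+2}$. Eliminating $p_{k\pm2}$ through~(\ref{symmon}) then rewrites $T_4(p_k)$ as a quadratic form in two consecutive polynomials, say $p_{k-1}$ and $p_{k-2}$, with coefficients explicit in $x$. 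This form is not positive definite --- its leading behaviour in $x$ is indefinite --- so squares alone cannot account for the positivity of $T_4$; the third difference in~(\ref{symt4usl}) is precisely the quantity that closes the gap, and arranging it correctly, together if necessary with the induction hypothesis on $T_4(p_{k-1})$ to absorb a residual term, is the delicate point.
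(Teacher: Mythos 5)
Your overall strategy --- induction on $k$ with an explicitly verified base case and an algebraic identity carrying the third difference (\ref{symt4usl}) at a single index per step --- is exactly the paper's, and your base case computation $T_4(p_2)=(-3a_1^2+3a_2^2-a_3^2)x^2+3a_1^4+a_1^2a_3^2>0$ and your degree count for $T_4(p_k)$ are correct. The gap is that you never produce the inductive identity, and the template you propose for it,
$$c_k\bigl(A_k\,T_4(p_k)-B_k\,T_4(p_{k-1})\bigr)=\mathcal{S}_k^2+\bigl(a_{k-2}^2-3a_{k-1}^2+3a_k^2-a_{k+1}^2\bigr)\mathcal{R}_k^2,$$
with $\mathcal{S}_k$ a genuine square in $p_{k-1}$ and $p_{k-2}$, is not the form that works: as you yourself observe at the end, after eliminating $p_{k\pm 2}$ the relevant quadratic form in two consecutive polynomials is indefinite in $x$, so no choice of a single square $\mathcal{S}_k^2$ can close the argument pointwise.

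The identity that actually does the job (the paper's (\ref{uslt4sym})) is
$$T_4(p_{k+1})=a_{k-1}^2\,T_4(p_k)+\bigl(a_{k+2}^2+3a_k^2-4a_{k-1}^2\bigr)T_2(p_k)+\bigl(a_{k-1}^2-3a_k^2+3a_{k+1}^2-a_{k+2}^2\bigr)p_k^2,$$
which is verified directly from (\ref{symmon}). The slot you reserved for $\mathcal{S}_k^2$ is occupied by a positive multiple of $T_2(p_k)$ --- the coefficient $a_{k+2}^2+3a_k^2-4a_{k-1}^2$ is positive because $a_i$ is increasing --- and the positivity of $T_2(p_k)$ must be imported as a separate fact (Corollary \ref{colloropt} in the symmetric case, equivalently the sum-of-squares formula $T_2(p_k)=\sum_{i=0}^{k-1}(a_{i+1}^2-a_i^2)p_i^2\prod_{j=i+1}^{k-1}a_j^2$) rather than read off as the square of a linear combination of neighbouring $p_i$. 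Your closing remarks about separating off $T_2$ and its clean recursion point in precisely this direction; what is missing is the explicit identity above, after which your induction goes through verbatim.
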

\begin{proof}
First, we find
$$T_4(p_2)=(-3a_1^2+3a_2^2-a_3^2)x^2+3a_1^4+a_1^2 a_3^2 >0.$$
We have the following directly checked identity
\begin{equation}
\label{uslt4sym}
T_4 (p_{k+1}) =
\end{equation}
$$a_{k-1}^2 T_4 (p_k)+(a_{k+2}^2+3 a_k^2 -4 a_{k-1}^2)T_2 (p_k) +(a_{k-1}^2-3 a_k^2+3 a_{k+1}^2-a_{k+2}^2)p_k^2 .$$
By Corollary \ref{colloropt} for symmetric polynomials $T_2(p_k) >0,$ provided $a_i$ are strictly increasing.
Now the result follows by the induction on $k$ and the convention $a_0=0.$
\end{proof}

In the next section we will establish the inequality $T_4({\bf p}_k)>0$ in the general case under rather technical conditions and a restriction on $x.$ For orthonormal symmetric polynomials defined by
$$a_i {\bf p}_i=x {\bf p}_{i-1}- a_{i-1} {\bf p}_{i-2}, $$
one can use
$$a_{k+2}a_{k+3} T_4({\bf p}_{k+1})=a_{k-1}a_{k+2}T_4({\bf p}_{k})+$$
$$\left(4a_{k+3}a_k- 4a_{k+2}a_{k-1}+a_{k+2}^2-a_k^2 \right)T_2({\bf p}_{k})+\mathcal{E}_k^{(1)}{\bf p}_{k+1}^2+\mathcal{E}_k^{(2)}{\bf p}_{k}^2 ,$$
where
$$\mathcal{E}_i^{(1)}=3a_{i+2}a_{i+3}-4a_{i+1}a_{i+3}+a_i a_{i+1},$$
$$\mathcal{E}_i^{(2)}=(4a_{i+3}-a_i)(a_{i+1}-a_i)-a_{i+2}(a_{i+2}-a_{i-1}).$$
However the initial condition
$$a_1^2 a_2^2 a_3 a_4 T_4({\bf p_2})= \mathcal{E}_1^{(1)} x^4+
\left[2a_4(2a_1^2 a_2+2a_3^2-3a_1^2 a_3)-a_1 a_2 (a_1^2+a_2^2+a_3^2) \right] x^2+$$
$$a_1^3 a_3(3a_1 a_4+a_2 a_3)>0,$$
looks rather complicated. Alternatively, one can use
$$a_1^2 a_2 T_4({\bf p_1})= (3a_2-4a_1) x^2+4a_1^3.$$
 This assumption requires $T_4({\bf p_1})=3{\bf p}_1^2-4 {\bf p}_2 >0,$ what is much stronger than the standard Tur\'{a}n inequality
$T_2({\bf p_1})={\bf p}_1^2-{\bf p}_2 >0.$

Now we will establish the inequality $S_2( {\bf p}_{k} ) \ge 0,$ that is the standard \Tu inequality $p_k^2-p_{k-1}p_{k+1} >0$ in the monic normalization.
To do this we have to relax the condition that it holds for all $x \in \mathbb{R}.$

The following simple fact was noticed by different authors (see e.g. \cite{dombn}, \cite{shi}).
\begin{lemma}
\label{thoneway}
$ S_2( {\bf p}_{k} ) \ge 0$ for $b_k-2a_k \le x \le b_k+2a_k.$
\end{lemma}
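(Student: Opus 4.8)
The plan is to work with the orthonormal polynomials $\mathbf{p}_k$, write the Turán expression in terms of the ratio $t_k = \mathbf{p}_{k-1}/\mathbf{p}_k$, and exploit the fact that inside the interval $[b_k - 2a_k,\, b_k + 2a_k]$ the three-term recurrence coefficient structure forces $S_2$ to be a manifest sum of squares. Concretely, since the inequality $S_2(\mathbf{p}_k) \ge 0$ is normalization-invariant (as noted in Section~\ref{sec2}), it suffices to prove $\mathbf{p}_k^2 - \mathbf{p}_{k-1}\mathbf{p}_{k+1} \ge 0$ for $x$ in the stated range, where the orthonormal recurrence reads $a_k \mathbf{p}_k = (x - b_{k-1})\mathbf{p}_{k-1} - a_{k-1}\mathbf{p}_{k-2}$.

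The key step is the algebraic identity. Dividing through by $\mathbf{p}_k^2$ and using the recurrence to express $\mathbf{p}_{k+1}$ via $\mathbf{p}_k$ and $\mathbf{p}_{k-1}$, one gets
\[
\frac{\mathbf{p}_k^2 - \mathbf{p}_{k-1}\mathbf{p}_{k+1}}{\mathbf{p}_k^2}
= 1 - \frac{(x-b_k)\mathbf{p}_{k-1}\mathbf{p}_k - a_k \mathbf{p}_{k-1}^2}{a_k \mathbf{p}_k^2}
= 1 - \frac{(x-b_k)}{a_k}\, t_k + t_k^2,
\]
a quadratic in $t_k$ with discriminant $\left(\frac{x-b_k}{a_k}\right)^2 - 4$. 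Whenever $b_k - 2a_k \le x \le b_k + 2a_k$ this discriminant is non-positive, so the quadratic (having positive leading coefficient) is non-negative for \emph{every} real value of $t_k$; equivalently one can write it as $\left(t_k - \frac{x-b_k}{2a_k}\right)^2 + \left(1 - \frac{(x-b_k)^2}{4a_k^2}\right) \ge 0$. Multiplying back by $\mathbf{p}_k^2$ gives $S_2(\mathbf{p}_k) \ge 0$ at all $x$ in the interval, including at the zeros of $\mathbf{p}_k$ (where $t_k$ is infinite but the sign is controlled by the leading term, or one argues by continuity / directly from the square form $a_k^2\mathbf{p}_k^2 - a_k(x-b_k)\mathbf{p}_{k-1}\mathbf{p}_k + a_k^2\mathbf{p}_{k-1}^2 = \left(a_k\mathbf{p}_k - \tfrac{x-b_k}{2}\mathbf{p}_{k-1}\right)^2 + \left(a_k^2 - \tfrac{(x-b_k)^2}{4}\right)\mathbf{p}_{k-1}^2$).

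There is essentially no obstacle here: the statement is precisely the elementary observation that the relevant quadratic form in $(\mathbf{p}_k, \mathbf{p}_{k-1})$ is positive semidefinite exactly when $|x - b_k| \le 2a_k$, which is why Lemma~\ref{thoneway} is flagged as a "simple fact." The only point requiring a word of care is handling the zeros of $\mathbf{p}_k(x)$, which is dispatched by phrasing the identity directly as a quadratic form in the pair $(a_k\mathbf{p}_k,\ \mathbf{p}_{k-1})$ rather than in the ratio $t_k$, so that no division is performed and the sum-of-squares expression is valid for all $x$ in the interval unconditionally. I would present the argument in exactly that order: reduce to the orthonormal case by normalization invariance, state the quadratic-form identity, observe that both "squares" have non-negative coefficients when $|x-b_k|\le 2a_k$, and conclude.
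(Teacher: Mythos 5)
Your argument is correct and essentially identical to the paper's proof: both use the three-term recurrence to eliminate $\mathbf{p}_{k+1}$ and observe that $S_2(\mathbf{p}_k)=\mathbf{p}_{k-1}^2+\mathbf{p}_k^2-\frac{x-b_k}{a_k}\,\mathbf{p}_{k-1}\mathbf{p}_k$ is a quadratic form in $(\mathbf{p}_k,\mathbf{p}_{k-1})$ whose discriminant $\left(\frac{x-b_k}{a_k}\right)^2-4$ is non-positive precisely for $|x-b_k|\le 2a_k$. The one notational slip is that in the orthonormal normalization the quantity your identity actually computes is $S_2(\mathbf{p}_k)=\mathbf{p}_k^2-\frac{a_{k+1}}{a_k}\,\mathbf{p}_{k-1}\mathbf{p}_{k+1}$ (the factor $a_{k+1}/a_k$ is present), not $\mathbf{p}_k^2-\mathbf{p}_{k-1}\mathbf{p}_{k+1}$, but since that is exactly the statement of the lemma this does not affect the argument.
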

\begin{proof}
By (\ref{grec})
$${\bf p}_k^2-\frac{a_{k+1}}{a_k }\, {\bf p}_{k-1}{\bf p}_{k+1}=
{\bf p}_{k-1}^2+{\bf p}_k^2- \frac{x-b_k}{a_k} \, {\bf p}_{k-1}{\bf p}_k ,
$$
where discriminant is negative for $|x-b_k| <2a_k.$

\end{proof}
\begin{theorem}
 \label{turantwo}
 Suppose that $a_k$ and $b_k$ are nondecreasing, then
 $$ S_2( {\bf p}_{k} ) \ge 0$$
 for $x \ge b_k-2a_k, \; \; k=0,1,...$.
  \end{theorem}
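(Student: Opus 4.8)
The plan is to prove the inequality $S_2(\mathbf{p}_k)\ge 0$ for all $x\ge b_k-2a_k$ by induction on $k$, using the recurrence-based identity together with Lemma~\ref{thoneway} to cover the remaining range. Recall that in the orthonormal normalization $S_2(\mathbf{p}_k)=\mathbf{p}_k^2-\frac{a_{k+1}}{a_k}\mathbf{p}_{k-1}\mathbf{p}_{k+1}$, and by the normalization invariance remarked in Section~\ref{sec2} this is $d_k^{-2}$ times the monic Tur\'an expression $q_k^2-q_{k-1}q_{k+1}$. Lemma~\ref{thoneway} already gives $S_2(\mathbf{p}_k)\ge 0$ on the interval $[b_k-2a_k,\,b_k+2a_k]$, so the real work is to establish the inequality for $x>b_k+2a_k$, i.e.\ to the right of all the zeros of $\mathbf{p}_k$ (since $x_{kk}\le b_{k-1}+2a_{k-1}$-type bounds and more simply $x_{kk}<b_k+2a_k$ hold).

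First I would record the key one-step identity. Writing things in terms of $t_k=t_k(x)=\mathbf{p}_{k-1}(x)/\mathbf{p}_k(x)$, as in the proof of Theorem~\ref{prop1}, the quantity $F_k(x)=S_2(\mathbf{p}_k)/\mathbf{p}_k^2 = 1-\frac{a_{k+1}}{a_k}\cdot\frac{\mathbf{p}_{k-1}\mathbf{p}_{k+1}}{\mathbf{p}_k^2}$ can be expressed via (\ref{grec}) as $1 - \frac{x-b_k}{a_k}t_k + t_k^2$ — precisely the form appearing in Lemma~\ref{thoneway}. For $x>b_k+2a_k$ this quadratic in $t_k$ has two positive real roots $t_\pm = \frac{(x-b_k)\pm\sqrt{(x-b_k)^2-4a_k^2}}{2a_k}$, and $F_k(x)\ge 0$ exactly when $t_k\le t_-$ or $t_k\ge t_+$. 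So it suffices to show $0<t_k(x)\le t_-(x)$ for $x>b_k+2a_k$. The natural route is a second induction/monotonicity argument on the continued-fraction tail: from (\ref{grec}), $t_k = 1/\bigl(\frac{x-b_{k-1}}{a_k} - \frac{a_{k-1}}{a_k}t_{k-1}\bigr)$, which lets one propagate an upper bound on $t_{k-1}$ to one on $t_k$, using that $b_k$ and $a_k$ are nondecreasing.

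The cleanest way to organize this is to prove by induction the stronger statement: for every $k$ and every $x\ge b_k-2a_k$ one has $a_k t_k(x)\le \tau_k(x)$, where $\tau_k(x)$ is the smaller root of $z^2-(x-b_k)z+a_k^2=0$ — equivalently $a_k t_k \le \tfrac12\bigl((x-b_k)-\sqrt{(x-b_k)^2-4a_k^2}\bigr)$ when $x\ge b_k+2a_k$, and there is nothing to prove on $[b_k-2a_k,b_k+2a_k]$ because there $F_k\ge 0$ directly by Lemma~\ref{thoneway}. The base case $k=0$ is immediate since $\mathbf{p}_{-1}=0$ gives $t_0=0$. For the inductive step, on $x>b_k+2a_k$ I would substitute the bound $a_{k-1}t_{k-1}\le\tau_{k-1}(x)$ into $a_k t_k = a_k^2/\bigl((x-b_{k-1}) - a_{k-1}t_{k-1}\bigr)$, use $x-b_{k-1}\ge x-b_k$ and the monotonicity $a_{k-1}\le a_k$, and check algebraically that the resulting upper bound for $a_kt_k$ is $\le \tau_k(x)$; this reduces to a one-variable inequality comparing $\tau_{k-1}$ and $\tau_k$ that should follow from $b_{k-1}\le b_k$ and $a_{k-1}\le a_k$ by an explicit, if slightly tedious, estimate. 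A subtle point: the interval where $t_{k-1}$ is controlled is $x\ge b_{k-1}-2a_{k-1}$, which contains the interval $x\ge b_k-2a_k$ we care about precisely because $b,a$ are nondecreasing, so the inductive hypothesis is always available where needed.

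The main obstacle I anticipate is the algebraic verification that the propagated bound on $a_kt_k$ actually lands below $\tau_k(x)$ uniformly in $x>b_k+2a_k$: one is composing a M\"obius map with the root of a quadratic, and one must exploit the monotonicity of $\{a_k\}$ and $\{b_k\}$ at just the right moment (increasing $b_{k-1}$ to $b_k$ shrinks $x-b_{k-1}$, which moves $a_kt_k$ in the unfavourable direction, so the gain must come entirely from $a_{k-1}\le a_k$ together with the shape of $\tau$). I would handle this by clearing denominators and rewriting the desired inequality as a manifestly nonnegative combination — ideally a sum of squares times nonnegative factors like $(a_k-a_{k-1})$ and $(b_k-b_{k-1})$, mirroring the identity (\ref{idenk}) used in Theorem~\ref{turgenn} — rather than by brute-force root comparison. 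Once $0<t_k(x)\le t_-(x)$ is established for $x>b_k+2a_k$, we get $F_k(x)\ge 0$ there, and combined with Lemma~\ref{thoneway} on $[b_k-2a_k,b_k+2a_k]$ this yields $S_2(\mathbf{p}_k)\ge 0$ for all $x\ge b_k-2a_k$, completing the proof.
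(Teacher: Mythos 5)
Your proposal is correct, but it takes a genuinely different route from the paper. The paper also inducts on $k$ and also invokes Lemma~\ref{thoneway} to reduce to $x>b_{k+1}+2a_{k+1}$, but its induction step compares two consecutive Tur\'an expressions directly: it picks an explicit multiplier $\lambda>0$ and exhibits $S_2({\bf p}_{k+1})-\lambda S_2({\bf p}_k)$ as a quadratic form $V{\bf p}_k^2-U{\bf p}_k{\bf p}_{k+1}+W{\bf p}_{k+1}^2$ whose discriminant is manifestly negative (a sum-of-squares-type certificate in the spirit of (\ref{idenk})). You instead run the induction on the ratio $t_k={\bf p}_{k-1}/{\bf p}_k$, propagating the bound $a_kt_k\le\tau_k$ through the continued-fraction recursion --- a chain-sequence-flavoured argument. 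The algebraic verification you defer as ``slightly tedious'' in fact collapses completely: writing $\sigma_k,\tau_k$ for the larger and smaller roots of $z^2-(x-b_k)z+a_k^2$, the inductive hypothesis gives $(x-b_{k-1})-a_{k-1}t_{k-1}\ge(x-b_{k-1})-\tau_{k-1}=\sigma_{k-1}>0$, hence $a_kt_k\le a_k^2/\sigma_{k-1}$; since $\tau_k=a_k^2/\sigma_k$, the desired bound is exactly $\sigma_k\le\sigma_{k-1}$, which is immediate from $\partial\sigma/\partial b<0$ and $\partial\sigma/\partial a<0$ together with $b_{k-1}\le b_k$, $a_{k-1}\le a_k$ (so, contrary to your worry, both monotonicities push in the favourable direction, and no sum-of-squares rewriting is needed). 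Two small points to tidy: state the inductive claim only for $x\ge b_k+2a_k$, where $\tau_k$ is real (on $[b_k-2a_k,b_k+2a_k]$ you rely on Lemma~\ref{thoneway} alone, not on the ratio bound); and carry $t_k\ge 0$ along in the induction (it propagates automatically, since the denominator above lies in $[\sigma_{k-1},\,x-b_{k-1}]$), so that the M\"obius map is applied on its monotone branch without appealing to zero locations. What each approach buys: the paper's certificate is self-contained and immediately verifiable by expanding one discriminant, while yours isolates the mechanism --- monotonic coefficients force the tail of the continued fraction below the fixed point of the one-step map --- and is the more transparent and reusable argument.
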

\begin{proof}
We assume that either $a_k$ or $b_k,$ say, $a_k$ is strictly increasing. For the nondecreasing $a_k$ the required inequality follows by obvious limiting arguments.

The proof is by induction on $k.$
 By the previous lemma we may assume that $x > b_{k+1}+2a_{k+1}.$
Set $x=b_{k+1}+ 2 a_{k+1}+y^2 $ to impose this condition, and let also
$b_{k+1}=b_k+ \delta^2 .$

We have
$S_2({\bf p_0})=1,$ and
$$ a_1^2 S_2({\bf p_1})=(b_1-b_0)x+a_1^2+b_0^2-b_0 b_1 \ge 0, $$
for $x \ge b_1-2a_1.$
Choose
$$\lambda = \frac{a_k^2}{a_{k+1}^2}  \, \frac{x^2-(b_k+b_{k+1})x -
2a_k^2-2a_{k+1}^2+b_k b_{k+1}}{ (x-b_k+2a_k)(x-b_k-2a_k)} = $$
$$
\frac{a_k^2}{a_{k+1}^2}  \, \frac{y^4+(4a_{k+1}+\delta^2 )y^2+2(a_{k+1}^2-a_k^2 +\delta^2 a_{k+1})}{ (x-b_k+2a_k)(x-b_k-2a_k)}>0 .$$

 Since $b_{k+1}+2a_{k+1} > b_{k}+2a_{k}$ it will be enough to show that the quadratic
$$D({\bf p}_{k+1},{\bf p}_{k})=a_{k+1}^2(x-b_k-2a_k)(x-b_k+2a_k)\left( S_2( {\bf p}_{k+1} )- \lambda S_2( {\bf p}_{k} ) \right)=$$
$$
V {\bf p}_k^2-U {\bf p}_k {\bf p}_{k+1}+W {\bf p}_{k+1}^2,
$$
 is positive.\\
 We find
$$V=  a_{k+1}^2 \delta^4+(2a_{k+1}+y^2)(2a_{k+1}^2-a_k^2) \delta^2 +
(a_{k+1}^2-a_k^2)(y^4+ 4a_{k+1}y^2+4a_{k+1}^2-2a_k^2) , $$
$$U=2 a_{k+1} \left((a_{k+1}^2+a_k^2) \delta^2+(2a_{k+1}+y^2)(a_{k+1}^2-a_k^2) \right), $$
$$W=a_{k+1}^2 \left(\delta^4 +(2a_{k+1}+y^2) \delta^2 +2a_{k+1}^2 -2a_k^2 \right). $$
The discriminant of $D$ is
$$\Delta=-4a_{k+1}^2 c_{k+1}^2 (2a_{k+1}+2a_k+\delta^2+y^2)(2a_{k+1}-2a_k+\delta^2+y^2)
\left(a_{k+1}^2 \delta^4 + \right.$$
$$\left. (2a_{k+1}+y^2)(a_{k+1}^2-a_k^2) \delta^2+(a_{k+1}^2-a_k^2)^2 \right) < 0. $$
As $W >0$  we conclude that $D > 0.$ This completes the proof.
\end{proof}
In the symmetric case we have the following result.
\begin{theorem}
\label{symtur4}
Suppose $a_k$ are increasing, $b_k \equiv 0,$ then
\begin{equation}
\label{eqsymtur2}
S_2({\bf p_k}) > 0.
\end{equation}
If also for $i=3,...,k,$
\begin{equation}
\label{usl4}
R_i=
\end{equation}
$$ a_{i+1}^2 a_{i}^2 (a_{i-1}^2-a_{i-2}^2)- a_{i}^2 a_{i-2}^2(a_{i}^2-a_{i-2}^2)+
a_{i-2}^2 a_{i-3}^2 (a_{i}^2-a_{i-1}^2) \ge 0, $$
then
\begin{equation}
\label{eqsymtur4}
S_4({\bf p_k}) > 0.
\end{equation}
\end{theorem}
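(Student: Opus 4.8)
The natural strategy is induction on $k$, mirroring the inductive identities already used for $T_4$ in Theorem \ref{tht4symmon} and for $S_2$ in Theorem \ref{turantwo}, but now in the orthonormal symmetric normalization. For the first assertion $S_2({\bf p}_k)>0$, I would first record the normalization-invariance noted in Section \ref{sec2}: since $S_2(p_k)=d_k^2 S_2({\bf p}_k)$, the inequality $S_2({\bf p}_k)>0$ is equivalent to $T_2(q_k)>0$ for the monic symmetric family, which is covered by Corollary \ref{colloropt} (the condition \eqref{eqsw2} reduces, when $b_i\equiv 0$, to $a_i$ increasing, and \eqref{eqsw1} is automatic since $b_1=b_0=0$). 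So the first part is essentially a restatement of what is already proved; I would just say so.

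For $S_4({\bf p}_k)>0$ the plan is: (i) write down an explicit three-term identity expressing $S_4({\bf p}_{k+1})$ as a nonnegative combination of $S_4({\bf p}_k)$, of $S_2({\bf p}_k)$ (or $S_2({\bf p}_{k+1})$), and of squares ${\bf p}_k^2$, ${\bf p}_{k+1}^2$, with the coefficient multiplying ${\bf p}_k^2$-type terms being exactly (a positive multiple of) $R_{k+1}$; (ii) check that the coefficient of $S_4({\bf p}_k)$ is positive (this should be something like a ratio of $a$'s, positive since the $a_i$ are increasing) and that the coefficient of $S_2$ is positive under the hypothesis — here I expect to need the increasing monotonicity of $a_i$ together with the $R_i\ge 0$ conditions, possibly after summing or telescoping; (iii) verify the base case. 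Given the remark in the excerpt that the orthonormal initial condition $T_4({\bf p}_1)>0$ is "much stronger than the standard Turán inequality," I would instead anchor the induction at $k=2$, computing $S_4({\bf p}_2)$ directly from \eqref{s4def} and \eqref{grec} and showing it is a positive-definite quadratic in $x$; this is the analogue of the $T_4(p_2)$ computation in the proof of Theorem \ref{tht4symmon}, and the hypothesis $R_2\ge 0$ (or $R_3\ge 0$) should be exactly what makes the leading coefficient nonnegative. Because $S_4$ has the normalization-invariance $S_4(p_k)=d_k^2 S_4({\bf p}_k)$, one has the freedom to carry out all of these computations in whichever of the monic or orthonormal pictures keeps the algebra cleanest; I would likely do the structural identity in the monic normalization (where $S_4=$ the $T_4$-like combination with the $\mu_k,\nu_k$ weights) and translate.

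The main obstacle is step (i)–(ii): producing the right inductive identity and then certifying nonnegativity of the $S_2$-coefficient. Unlike the symmetric monic $T_4$ case, where the identity \eqref{uslt4sym} is clean and the coefficient $a_{k+2}^2+3a_k^2-4a_{k-1}^2$ of $T_2$ is handled by a short monotonicity argument, here the weights $\mu_k,\nu_k$ in $S_4$ are rational functions of the $a_i^2$, so the analogue of \eqref{uslt4sym} will have rational coefficients whose numerators must be shown to have a definite sign. I expect the expression $R_i$ defined in \eqref{usl4} to be precisely (the numerator of) the coefficient attached to the square terms after clearing denominators, so that $R_i\ge 0$ is exactly the induction-closing hypothesis, and the remaining coefficients (those of $S_4({\bf p}_k)$ and of the $S_2$ term) to be manifestly positive from "$a_i$ increasing." Verifying the precise form of this identity is where the tedious algebra lives; the paper will presumably present it as a "directly checked identity" as it did for \eqref{idenk}, \eqref{uslt4sym}, and the displays preceding Lemma \ref{thoneway}, and I would do the same — state the identity, note it is verified by expansion using \eqref{grec}, and then read off the sign conditions.
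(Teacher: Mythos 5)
Your plan for the $S_4$ part is structurally the paper's proof: the paper establishes (\ref{eqsymtur4}) by induction from the single identity
$$a_{k+1}^2 (a_{k+2}^2-a_{k-1}^2)(a_k^2-a_{k-1}^2)\,S_4( {\bf p}_{k+1} ) =
 a_{k-1}^2 (a_{k+1}^2-a_{k-2}^2) (a_{k+1}^2-a_k^2)\, S_4( {\bf p}_{k} )+
R_{k+1}\, S_2( {\bf p}_{k} ),$$
together with the base value $S_4({\bf p}_2)=1$. So $R_{k+1}$ is exactly the coefficient of $S_2({\bf p}_k)$, the coefficient of $S_4({\bf p}_k)$ is nonnegative simply because $a_i$ is increasing, and there are no residual square terms to control. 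One of your anticipations is off but harmless: $S_4({\bf p}_2)$ is not a quadratic whose leading coefficient is governed by $R_2$ or $R_3$; since $S_4$ is built to have degree $2k-4$ in the symmetric case, $S_4({\bf p}_2)$ is identically the constant $1$, unconditionally.

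The genuine gap is in your derivation of (\ref{eqsymtur2}). Corollary \ref{colloropt} asserts $T_2(p_k)>0$ for $p_k$ taken in the particular normalization (\ref{eqnormaliz}), and $T_2$ --- unlike $S_2$ --- is not normalization-invariant. Translated to the monic family $q_k$, that corollary only gives $q_k^2-\xi_k q_{k-1}q_{k+1}>0$ with $\xi_k=2a_k^2/(a_k^2+a_{k+1}^2)<1$, which does not imply the monic Tur\'an inequality $q_k^2-q_{k-1}q_{k+1}>0$ (equivalently $S_2({\bf p}_k)>0$) on the set where $q_{k-1}q_{k+1}>0$. The statement you want is true, but the paper proves it directly from the one-line telescoping identity
$$a_{k+1}^2 S_2({\bf p}_{k+1} ) -a_k^2  S_2({\bf p}_{k} )=\left(a_{k+1}^2-a_k^2 \right) {\bf p}_k^2 \ge 0,$$
starting from $S_2({\bf p}_1)=1$; equivalently one can quote the sum-of-squares formula $S_2({\bf p}_{k} )=a_k^{-2} \sum_{i=0}^{k-1} (a_{i+1}^2-a_i^2) {\bf p}_i^2$ recorded at the end of Section \ref{sec3}. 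Replace your citation by one of these and supply (and verify) the $S_4$ identity above, and your outline becomes the paper's argument.
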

\begin{proof}
Inequality (\ref{eqsymtur2}) follows from $S_2( {\bf p}_{1} )=1$ and the identity
\begin{equation}
\label{ident2sym}
a_{k+1}^2 S_2({\bf p}_{k+1} ) -a_k^2  S_2({\bf p}_{k} )=
 \left(a_{k+1}^2-a_k^2 \right) {\bf p}_k^2 \ge 0.
 \end{equation}
Inequality (\ref{eqsymtur4}) follows from $S_4( {\bf p}_{2} )=1$ and the identity
\begin{equation}
\label{ident4sym}
a_{k+1}^2 (a_{k+2}^2-a_{k-1}^2)(a_k^2-a_{k-1}^2)S_4( {\bf p}_{k+1} ) =
\end{equation}
$$ a_{k-1}^2 (a_{k+1}^2-a_{k-2}^2) (a_{k+1}^2-a_k^2) S_4( {\bf p}_{k} )+
R_{k+1} S_2( {\bf p}_{k} ).$$
\end{proof}
In general, it is not easy to check the assumption $R_i \ge 0.$ We give the following sufficient conditions which restrict the growth of $a_k$ to
 $$ \sqrt{a_{k-1}a_{k+1}} \le a_k  \le \sqrt{\frac{a_{k-1}^2+a_{k+1}^2}{2}} \, .$$
\begin{lemma}
\label{uslr}
If for $i \le k, \; \; k \ge 3,$ the following conditions hold\\
({\it i})
$a_i$ is strictly increasing, \\
({\it ii})
$ a_{i+1}^2-2a_i^2+a_{i-1}^2 \ge 0, $\\
({\it iii})
$\frac{a_{i}}{a_{i+1}}  $ is nondecreasing,\\
Then
$R_k > 0.$
\end{lemma}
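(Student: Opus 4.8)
The plan is to work with the squared coefficients $u_i:=a_i^2$, for which hypothesis (ii) becomes convexity of $\{u_i\}$ and hypothesis (iii) becomes log-concavity, $u_{i-1}u_{i+1}\le u_i^2$. Abbreviate $(a,b,c,d,e):=(u_{k-3},u_{k-2},u_{k-1},u_k,u_{k+1})$ and set $\delta_0:=b-a$, $\delta_1:=c-b$, $\delta_2:=d-c$, $\delta_3:=e-d$. Hypothesis (i) gives $b,c,d,e>0$ together with $\delta_0,\delta_1,\delta_2,\delta_3>0$ — here $\delta_0=u_{k-2}-u_{k-3}$ is positive in every case, being $u_1>0$ when $k=3$ and $a=u_0=0$. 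Hypothesis (ii), applied at $i=k-2$ and at $i=k$, gives $\delta_0\le\delta_1$ and $\delta_2\le\delta_3$; hypothesis (iii) gives, in particular, $bd\le c^2$.

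Next I would express $R_k$ through $b$, $d$ and the increments. Using $c-b=\delta_1$, $d-b=\delta_1+\delta_2$ and $d-c=\delta_2$, the cubic
$$R_k=ed(c-b)-db(d-b)+ba(d-c)$$
reduces to $R_k=\delta_1 d(e-b)-\delta_2 b(d-a)$; then substituting $e-b=\delta_1+\delta_2+\delta_3$ and $d-a=\delta_0+\delta_1+\delta_2$ and collecting terms produces
$$R_k=(\delta_1+\delta_2)(\delta_1 d-\delta_2 b)+\bigl(\delta_1\delta_3 d-\delta_0\delta_2 b\bigr).$$
Finding this particular grouping is the heart of the matter: it is chosen so that both summands are manifestly nonnegative under the hypotheses, which is not apparent from $R_k$ in its original form nor from the obvious alternative splittings (bounding $\delta_1 d(e-b)$ against $\delta_2 b(d-a)$ term by term fails, since $\delta_1\le\delta_2$ goes the wrong way). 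I expect this to be the main obstacle; the rest is routine.

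It remains to bound the two summands. For the second, convexity gives $\delta_1\delta_3\ge\delta_0\delta_2$, and since $d>b>0$ and $\delta_0\delta_2>0$,
$$\delta_1\delta_3 d-\delta_0\delta_2 b\ \ge\ \delta_0\delta_2(d-b)\ >\ 0.$$
For the first summand it suffices that $\delta_1 d-\delta_2 b\ge0$, i.e.\ $(c-b)d\ge(d-c)b$, which rearranges to $c(b+d)\ge2bd$ — the statement that $c$ is at least the harmonic mean of $b$ and $d$. Log-concavity gives $c\ge\sqrt{bd}$, and the geometric mean is never smaller than the harmonic mean, so this holds and $(\delta_1+\delta_2)(\delta_1 d-\delta_2 b)\ge0$. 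Adding the two estimates gives $R_k>0$. In the boundary case $k=3$ one has $a=u_0=0$, but every inequality used above involves only indices $1,2,3$ and remains valid (with $\delta_0=u_1>0$), so the argument goes through unchanged.
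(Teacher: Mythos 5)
Your argument is correct: the identity $R_k=(\delta_1+\delta_2)(\delta_1 d-\delta_2 b)+(\delta_1\delta_3 d-\delta_0\delta_2 b)$ checks out, and both estimates follow from the stated hypotheses (including at $k=3$, where $a_0=0$). The paper proves the lemma by a multiplicative rather than additive splitting: dividing $R_i\ge 0$ by the positive quantity $a_{i-2}^2(a_{i-1}^2-a_{i-2}^2)(a_{i+1}^2-a_{i-2}^2)$, it rewrites the claim as
$$\frac{a_{i}^2}{a_{i-1}^2} \cdot \frac{a_{i-1}^2}{a_{i-2}^2}
\ge \frac{a_{i}^2-a_{i-1}^2}{a_{i-1}^2-a_{i-2}^2} \cdot \frac{a_{i}^2-a_{i-3}^2}{a_{i+1}^2-a_{i-2}^2}\,,$$
and compares factor by factor: the first pair is ordered because $a_{i-1}^4\ge a_{i-2}^2a_i^2$ (your hypothesis $c^2\ge bd$), and the second because $a_{i-1}^2/a_{i-2}^2\ge 1$ while convexity of $a_i^2$ forces $\frac{a_i^2-a_{i-3}^2}{a_{i+1}^2-a_{i-2}^2}\le 1$ (telescoping the three consecutive second differences). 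So both proofs consume exactly the same two inputs --- the single log-concavity instance $u_{k-1}^2\ge u_{k-2}u_k$ and the monotonicity of the increments $\delta_j$ --- but package them differently: you as a sum of two manifestly nonnegative terms in the increments, the paper as a product of two ratio comparisons. Your version avoids all division (no need to track signs of denominators), and its second summand is strictly positive, so the strict inequality $R_k>0$ comes out cleanly; the paper's route is slightly shorter to discover because the needed comparison $a_{i-1}^4\ge a_{i-2}^2a_i^2$ appears directly, whereas your harmonic-mean reformulation $c(b+d)\ge 2bd$ requires the extra (if standard) step through $c\ge\sqrt{bd}\ge 2bd/(b+d)$.
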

\begin{proof}
For $i \ge 3$ we rewrite the condition $R_i \ge 0$  as
\begin{equation}
\label{newformr}
\frac{a_{i}^2}{a_{i-1}^2} \cdot \frac{a_{i-1}^2}{a_{i-2}^2}
\ge \frac{a_{i}^2-a_{i-1}^2}{a_{i-1}^2-a_{i-2}^2} \cdot \frac{a_{i}^2-a_{i-3}^2}{a_{i+1}^2-a_{i-2}^2}\, .
\end{equation}
Since
$$\frac{a_{i}^2}{a_{i-1}^2}
-\frac{a_{i}^2-a_{i-1}^2}{a_{i-1}^2-a_{i-2}^2} = \frac{a_{i-1}^4-a_{i-2}^2 a_{i}^2}{a_{i-1}^2(a_{i-1}^2-a_{i-2}^2)}\, ,$$
we obtain that $R_i \ge 0$ if
$$\frac{a_{i-2}}{a_{i-1}} \le \frac{a_{i-1}}{a_{i}} \, , $$
that is if ({\it i}) holds,
and
$$ \frac{a_{i-1}^2}{a_{i-2}^2}
\ge \frac{a_{i}^2-a_{i-3}^2}{a_{i+1}^2-a_{i-2}^2}\, .$$
To prove the last inequality it is enough to notice that
$\frac{a_{i-1}^2}{a_{i-2}^2} \ge 1,$ whereas by ({\it ii}) the sequence $a_{i+1}^2-a_i^2$ is nondecreasing and
$$\frac{a_{i}^2-a_{i-3}^2}{a_{i+1}^2-a_{i-2}^2}-1 =- \, \frac{a_{i+1}^2-a_{i}^2-( a_{i-2}^2-a_{i-3}^2)}{a_{i+1}^2-a_{i-2}^2} < 0.$$
This completes the proof.
\end{proof}
It's worth noticing that a necessary condition for positivity of $R_i$
can be also stated in terms of chain sequences.
Namely, $R_i >0$ implies that
$$\frac{a^2_{i-2} (a^2_i-a^2_{i-1})(a^2_{i+1}a^2_{i-1}-a^2_i a^2_{i-2})}{a^2_{i-1}a^2_i(a^2_{i+1}-a^2_{i-2})^2} $$
is a chain sequence, provided
$a_i$ are increasing. Indeed, defining
$$g_i = \frac{a^2_{i-1}}{a^2_i} \cdot \frac{1-\frac{a^2_{i-2}a^2_i}{a^2_{i-1}a^2_{i+1}}}{1-\frac{a^2_{i-2}}{a^2_{i+1}}} <1,$$
one can rewrite $R_i$ as follows
$$R_i= \frac{a^2_{i+1}}{a^2_i a^2_{i-1}} \cdot
\frac{(a^2_{i-1}-a^2_{i-2})(a^2_i-a^2_{i-1})^2}{(a^2_{i+1}-a^2_{i-2})(1-g_{i-1})(1-g_i)^2} \cdot
(a^2_i g_i-a^2_{i-1}g_{i-1}).$$
Hence $R_i >0$ iff $a^2_i g_i-a^2_{i-1}g_{i-1}>0,$ or equivalently,
$$(1-g_{i-1})g_i >g_i- \frac{a^2_i }{a^2_{i-1}} \, g_i^2 = \frac{a^2_{i-2} (a^2_i-a^2_{i-1})(a^2_{i+1}a^2_{i-1}-a^2_i a^2_{i-2})}{a^2_{i-1}a^2_i(a^2_{i+1}-a^2_{i-2})^2}  \, .$$

\begin{example}
Let us illustrate the above results for the Meixner-Pollaczek polynomials
$P_k^{(\lambda )}(x; \phi)$ (see e.g. \cite{koek}). This is, probably, the simplest example of not necessarily symmetric polynomials supported on the whole axis.
In the orthonormal case they are defined for $ \lambda >0$ and $0 < \phi < \pi$ by (\ref{grec}) with
$$a_k = \frac{\sqrt{k(k+2 \lambda-1)}}{2 \sin \phi} \, ,$$
$$b_k=  (k+ \lambda)\cot \phi  .$$
Thus, $a_k$ and $b_k$ are strictly increasing provided $\phi <\pi/2.$
In the symmetric case $\phi=\frac{\pi}{2}$ the expression in (\ref{symt4usl}) vanishes. For $R_i$ defined by (\ref{usl4}) one finds
$$R_i=24(i+ \lambda-1) (i+ \lambda-2)(2i+ \lambda-3)>0.$$
Hence, in the monic normalization
$$T_2 ( P_k^{(\lambda )}(x; \frac{\pi}{2})) >0, \; \; \; T_4 ( P_k^{(\lambda )}(x; \frac{\pi}{2})) >0,$$
and in the orthonormal case
$$ S_4 ( {\bf P}_k^{(\lambda )}(x; \frac{\pi}{2})) >0.$$

For general orthonormal Meixner-Pollaczek polynomials  the condition (\ref{uslc2}) of Theorem \ref{turgenn} is obviously fulfilled. After some algebraic manipulations (\ref{uslc1}) becomes
$$(\sqrt{i(i+2 \lambda-1)}-\sqrt{(i-1)(i+2 \lambda-2)} \;)(\sqrt{(i+1)(i+2 \lambda)}-\sqrt{i(i+2 \lambda-1)} \; ) $$
$$>\cos^2 \phi .$$
Replacing $\cos \phi$ by one and solving the obtained inequality one concludes that
$T_2({\bf P}_k^{(\lambda )}(x; \phi))>0$ for $\lambda \ge 1/2.$ For $\lambda < 1/2$
validity of the inequality depends on $\phi.$\\
Finally, $S_2({\bf P}_k^{(\lambda )}(x; \phi))\ge 0$ for
$$x \ge (k+ \lambda)\cot \phi -\frac{\sqrt{k(k+2 \lambda-1)}}{2 \sin \phi} \, .$$
\end{example}

Let us notice that the identities used in the proofs show that $T_2, T_4, S_2,S_4$ can be written as a sum of squares. For example, one can easily check that in the symmetric case
and monic normalization
$$ T_2(p_k)= \sum_{i=0}^{k-1} (a_{i+1}^2-a_i^2)p_i^2 \prod_{j=i+1}^{k-1} a_j^2.$$
In the orthonormal case we have
$$S_2({\bf p}_{k} )=a_k^{-2} \sum_{i=0}^{k-1} (a_{i+1}^2-a_i^2) {\bf p}_i^2 .$$
We give one more expression of this type.
 \begin{lemma}
\label{cor2}
For symmetric polynomials, $b_k \equiv 0,$
\begin{equation}
\label{eqcor24}
S_4({\bf p}_{k} )=
\frac{1}{a_k^2 a_{k-1}^2(a_{k+1}^2-a_{k-2}^2)}
\sum_{i=1}^{k-2} \left((a_{i+1}^2-a_i^2)\mathcal{A}_k- (a_{k}^2-a_{k-1}^2)\mathcal{A}_{i+1} \right){\bf p}_i^2,
\end{equation}
where
$$ \mathcal{A}_i=a_i^4-a_{i-1}^4 +a_i^2 a_{i+1}^2-a_{i-1}^2a_{i-2}^2 .$$
\end{lemma}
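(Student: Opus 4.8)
The plan is to establish (\ref{eqcor24}) by the same telescoping technique used for (\ref{eqsymtur2}) and (\ref{eqsymtur4}): start from the two-term recurrence (\ref{ident4sym}) relating $S_4({\bf p}_{k+1})$ to $S_4({\bf p}_{k})$, solve it explicitly, and recognize the result as the claimed sum. Concretely, rewrite (\ref{ident4sym}) in the form
$$
S_4({\bf p}_{k+1})=\frac{a_{k-1}^2(a_{k+1}^2-a_{k-2}^2)(a_{k+1}^2-a_k^2)}{a_{k+1}^2(a_{k+2}^2-a_{k-1}^2)(a_k^2-a_{k-1}^2)}\,S_4({\bf p}_k)+\frac{R_{k+1}}{a_{k+1}^2(a_{k+2}^2-a_{k-1}^2)(a_k^2-a_{k-1}^2)}\,S_2({\bf p}_k),
$$
and substitute the known closed form $S_2({\bf p}_k)=a_k^{-2}\sum_{i=0}^{k-1}(a_{i+1}^2-a_i^2){\bf p}_i^2$. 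Iterating from the base case $S_4({\bf p}_2)=1$, one obtains $a_k^2 a_{k-1}^2(a_{k+1}^2-a_{k-2}^2)\,S_4({\bf p}_k)$ as a linear combination $\sum_i \kappa_{i,k}\,{\bf p}_i^2$ with coefficients built from the telescoping products; the task is to check that $\kappa_{i,k}=(a_{i+1}^2-a_i^2)\mathcal{A}_k-(a_k^2-a_{k-1}^2)\mathcal{A}_{i+1}$ with $\mathcal{A}_i=a_i^4-a_{i-1}^4+a_i^2a_{i+1}^2-a_{i-1}^2a_{i-2}^2$.

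Rather than unwinding the recursion by hand, the cleaner route is to verify (\ref{eqcor24}) directly by induction on $k$. Denote by $\Sigma_k$ the right-hand side of (\ref{eqcor24}) times $a_k^2 a_{k-1}^2(a_{k+1}^2-a_{k-2}^2)$, i.e. $\Sigma_k=\sum_{i=1}^{k-2}\big((a_{i+1}^2-a_i^2)\mathcal{A}_k-(a_k^2-a_{k-1}^2)\mathcal{A}_{i+1}\big){\bf p}_i^2$. For the base case $k=2$ the sum is empty except that one must interpret the normalizing prefactor correctly so that $S_4({\bf p}_2)=1$; in fact for $k=2$ the identity should be read together with $S_4({\bf p}_2)=1$ directly, and the induction genuinely starts once $k\ge 3$ where ${\bf p}_1$ appears. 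For the inductive step, I would feed $\Sigma_k$ (in place of $a_k^2a_{k-1}^2(a_{k+1}^2-a_{k-2}^2)S_4({\bf p}_k)$) and $a_k^{-2}\sum_{i=0}^{k-1}(a_{i+1}^2-a_i^2){\bf p}_i^2$ (in place of $S_2({\bf p}_k)$) into the right side of (\ref{ident4sym}), and check that the resulting combination of ${\bf p}_i^2$ equals $\Sigma_{k+1}$ coefficient by coefficient. The coefficient of ${\bf p}_i^2$ for $1\le i\le k-2$ must match, and the new top term (the coefficient of ${\bf p}_{k-1}^2$, which is absent from $\Sigma_k$ but present in $S_2({\bf p}_k)$) must produce exactly the $i=k-1$ term of $\Sigma_{k+1}$; the latter amounts to the identity $R_{k+1}=(a_k^2-a_{k-1}^2)\big((a_k^2-a_{k-1}^2)\mathcal{A}_{k+1}-(a_{k+1}^2-a_k^2)\mathcal{A}_k\big)$ up to the common prefactors — a polynomial identity in the $a_j$ that one checks by expansion using the definition of $\mathcal{A}$ and of $R_i$ from (\ref{usl4}).

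The main obstacle is purely the bookkeeping in the inductive step: one has to confirm that for each fixed interior index $i$ the linear relation
$$
(a_{i+1}^2-a_i^2)\mathcal{A}_{k+1}-(a_{k+1}^2-a_k^2)\mathcal{A}_{i+1}
$$
is reproduced by
$$
a_{k-1}^2(a_{k+1}^2-a_{k-2}^2)(a_{k+1}^2-a_k^2)\cdot\frac{(a_{i+1}^2-a_i^2)\mathcal{A}_k-(a_k^2-a_{k-1}^2)\mathcal{A}_{i+1}}{a_k^2a_{k-1}^2(a_{k+1}^2-a_{k-2}^2)}\cdot\frac{1}{\text{(normalizing factor for }k{+}1)}
+\frac{R_{k+1}(a_{i+1}^2-a_i^2)}{a_k^2\cdot(\text{same factor})},
$$
after clearing the common denominators fixed by (\ref{ident4sym}); since $\mathcal{A}_{i+1}$ and $a_{i+1}^2-a_i^2$ are the only $i$-dependent quantities on both sides, this separates into two scalar identities in the $a_j$'s with $j$ near $k$, and these follow from the definition of $\mathcal{A}$ and a short computation with $R_{k+1}$. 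No positivity or analytic input is needed — it is a formal algebraic verification — so the only real risk is an arithmetic slip, which is why setting it up as a clean induction with $S_2$ already in closed form is the right organizing principle.
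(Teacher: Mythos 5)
Your route is essentially the paper's: both arguments feed the closed form $S_2({\bf p}_k)=a_k^{-2}\sum_{i=0}^{k-1}(a_{i+1}^2-a_i^2){\bf p}_i^2$ into the identity (\ref{ident4sym}); the paper unwinds the recursion and evaluates the resulting double sum by telescoping, whereas you verify the answer by induction. These are the same computation run in opposite directions. The single identity your induction hinges on, namely
$$R_{k+1}=(a_k^2-a_{k-1}^2)\mathcal{A}_{k+1}-(a_{k+1}^2-a_k^2)\mathcal{A}_k$$
(this is what your displayed relation becomes once the common factor $a_k^2-a_{k-1}^2$ is cancelled), is exactly the telescoping relation $R_{k+1}/\bigl((a_{k+1}^2-a_k^2)(a_k^2-a_{k-1}^2)\bigr)=\mathcal{A}_{k+1}/(a_{k+1}^2-a_k^2)-\mathcal{A}_k/(a_k^2-a_{k-1}^2)$ that drives the paper's summation; it is true (direct expansion confirms it), and, as you observe, it settles the interior coefficients and the new top coefficient $i=k-1$ simultaneously.

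The genuine loose end in your plan is at the bottom index, not the top. The expansion of $S_2({\bf p}_k)$ begins at $i=0$ with the term $(a_1^2-a_0^2){\bf p}_0^2=a_1^2$, so after clearing denominators the right-hand side of your inductive step carries the constant contribution $R_{k+1}a_1^2$, which has no counterpart in $\Sigma_{k+1}$ as you defined it, since the sum in (\ref{eqcor24}) starts at $i=1$. The induction therefore does not close on the statement as printed; it closes on the version in which the sum starts at $i=0$, the extra term being $\bigl(a_1^2\mathcal{A}_k-(a_k^2-a_{k-1}^2)\mathcal{A}_1\bigr){\bf p}_0^2$ with $\mathcal{A}_1=a_1^4+a_1^2a_2^2$. (Check at $k=3$ with $a_1^2,a_2^2,a_3^2,a_4^2=1,4,9,16$: the constant term of $a_3^2a_2^2(a_4^2-a_1^2)S_4({\bf p}_3)$ computed from (\ref{ident4sym}) equals $180=a_1^2\mathcal{A}_3-(a_3^2-a_2^2)\mathcal{A}_1$, not $0$.) This also fixes your base case: at $k=2$ the single $i=0$ term is $a_1^2a_2^2a_3^2=a_2^2a_1^2(a_3^2-a_0^2)$, giving $S_4({\bf p}_2)=1$ with no special reading of the prefactor. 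The paper's own final displayed equality discards $F_2$ together with the $j=0$ term, even though the two combine into precisely the $j=0$ instance of the general coefficient; so your induction, carried out honestly, both proves the corrected identity and locates where the printed one goes astray.
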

\begin{proof}
Substituting
$$S_4( {\bf p}_{i} )= \frac{a_i^2-a_{i-1}^2}{a_i^2 a_{i-1}^2(a_{i+1}^2-a_{i-2}^2)} \, F_{i} ,$$
into (\ref{ident4sym}) yields
$$F_{k+1}-F_k = \frac{a_k^2 R_{k+1}}{(a_{k+1}^2-a_k^2)(a_k^2-a_{k-1}^2)} \,  S_2({\bf p}_k).$$
Using $F_2=\frac{a_2^2-a_1^2}{a_1^2a_2^2 a_3^2}$  and $a_0=0$ one finds,
$$F_k=F_2+\sum_{i=3}^{k}\frac{ R_i}{(a_{i}^2-a_{i-1}^2)(a_{i-1}^2-a_{i-2}^2)}
\, \sum_{j=0}^{i-2} (a_{j+1}^2-a_j^2) {\bf p}_j^2= $$
$$F_2+\sum_{j=0}^{k-2}(a_{j+1}^2-a_j^2) {\bf p}_j^2 \sum_{i= \max {\{3,j+2 \}}}^{k}\frac{ R_i}{(a_{i}^2-a_{i-1}^2)(a_{i-1}^2-a_{i-2}^2)}=  $$
$$ \sum_{j=1}^{k-2}(a_{j+1}^2-a_j^2) {\bf p}_j^2 \sum_{i= j+2}^{k}\frac{ R_i}{(a_{i}^2-a_{i-1}^2)(a_{i-1}^2-a_{i-2}^2)}$$
The innermost sum is transformed into telescoping sums and we obtain
$$\sum_{i=j+2}^{k}\frac{ R_i}{(a_{i}^2-a_{i-1}^2)(a_{i-1}^2-a_{i-2}^2)}=
 \sum_{i=j+2}^{k} (a_{i+1}^2+a_{i-1}^2-2a_{i-2}^2)+$$
$$
\sum_{i=j+2}^{k}  \left( \frac{a_{i+1}^2 a_{i-1}^2}{a_i^2-a_{i-1}^2}-
\frac{a_i^2 a_{i-2}^2}{a_{i-1}^2-a_{i-2}^2} \right)+
\sum_{i=j+2}^{k}  \left( \frac{a_{i-1}^4 }{a_i^2-a_{i-1}^2}-
\frac{ a_{i-2}^4}{a_{i-1}^2-a_{i-2}^2} \right)-
$$
$$
\sum_{i=j+2}^{k}  \left( \frac{a_{i}^2 a_{i-1}^2}{a_i^2-a_{i-1}^2}-
\frac{a_{i-1}^2 a_{i-2}^2}{a_{i-1}^2-a_{i-2}^2} \right)- \sum_{i=j+2}^{k}  \left( \frac{a_{i-1}^2 a_{i-2}^2}{a_i^2-a_{i-1}^2}-
\frac{a_{i-2}^2 a_{i-3}^2}{a_{i-1}^2-a_{i-2}^2} \right)=
$$
$$
a_k^2+a_{k-1}^2 +\frac{a_k^2 a_{k+1}^2-a_{k-1}^2 a_{k-2}^2}{a_k^2-a_{k-1}^2}- a_{j+1}^2-a_j^2-
\frac{a_{j+1}^2 a_{j+2}^2-a_{j}^2 a_{j-1}^2}{a_{j+1}^2-a_{j}^2} =
$$
$$
\frac{\mathcal{A}_k}{a_{k}^2-a_{k-1}^2} -\frac{\mathcal{A}_{j+1}}{a_{j+1}^2-a_{j}^2} \, ,
$$
and the result follows.
\end{proof}


\section{A higher order Tur\'{a}n inequality for asymmetric case}
\label{sec4}
In this section we will establish, under some quite technical conditions, a higher order Tur\'{a}n inequality (Theorem \ref{t341}) which is tailored to deal with extreme zeros of the polynomials defined by a three term recurrence. We also show that the conditions of the theorem are fulfilled for some particular polynomially growing sequence $a_i =\alpha_i$ and $b_i =\beta_i$
which is used in the proof of Theorem \ref{thmainzer}. This will require some rather lengthly calculations.

We need the following claim which deals with arbitrarily coefficients $a_i>0$
and $b_i$  in recurrence (\ref{grec}).
\begin{lemma}
\label{center}
Let $x_{1k}$ and $x_{kk}$ be the least and the largest zero of $p_k,$ then
$$x_{kk} > \max_{i \le k}  \frac{b_{i-1}+b_{i-2}+\sqrt{4a_{i-1}^2+(b_{i-1}-b_{i-2})^2}}{2} \, > \max_{i \le k} b_{i} , $$

$$x_{1k} < \min_{i \le k}  \frac{b_{i-1}+b_{i-2}-\sqrt{4a_{i-1}^2+(b_{i-1}-b_{i-2})^2}}{2} \, < \min_{i \le k} b_{i} . $$
\end{lemma}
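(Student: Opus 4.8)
The natural route is through the spectral picture, refining the remark made just after Theorem \ref{prop1}: there one tests the quadratic form of the truncated Jacobi matrix against a single basis vector $e_i$, and here one tests it against a two-dimensional coordinate subspace. In the orthonormal normalization the recurrence (\ref{grec}) reads $a_j{\bf p}_j=(x-b_{j-1}){\bf p}_{j-1}-a_{j-1}{\bf p}_{j-2}$, so the zeros of $p_k$ are exactly the eigenvalues of the symmetric tridiagonal (Jacobi) matrix $J_k$ of order $k$ with diagonal entries $b_0,\ldots,b_{k-1}$ and off-diagonal entries $a_1,\ldots,a_{k-1}$; in particular $x_{kk}=\lambda_{\max}(J_k)$ and $x_{1k}=\lambda_{\min}(J_k)$. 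For fixed $i$ with $2\le i\le k$ let
$$M_i=\left(\begin{array}{cc} b_{i-2} & a_{i-1}\\ a_{i-1} & b_{i-1}\end{array}\right)$$
be the principal $2\times 2$ submatrix of $J_k$ on the two consecutive indices $i-2,i-1$, whose eigenvalues are
$$\frac{b_{i-2}+b_{i-1}\pm\sqrt{4a_{i-1}^2+(b_{i-1}-b_{i-2})^2}}{2}.$$

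First I would record the weak bounds. Restricting the quadratic form $v\mapsto(J_kv,v)$ to the coordinate plane spanned by $e_{i-2}$ and $e_{i-1}$ and applying the Rayleigh--Ritz (Cauchy interlacing) principle gives $x_{kk}\ge\lambda_{\max}(M_i)$ and $x_{1k}\le\lambda_{\min}(M_i)$ for every such $i$; taking the best $i$ yields the two displayed chains, whose outermost inequalities are immediate from $\sqrt{4a_{i-1}^2+(b_{i-1}-b_{i-2})^2}>|b_{i-1}-b_{i-2}|$ (valid because $a_{i-1}>0$), which already forces $\lambda_{\max}(M_i)>\max(b_{i-2},b_{i-1})$ and $\lambda_{\min}(M_i)<\min(b_{i-2},b_{i-1})$ for each $i$.

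It remains only to upgrade the first inequality in each chain from $\ge$ to $>$, and this is the sole step that uses $a_i>0$, i.e.\ the irreducibility of $J_k$. If $x_{kk}$ were attained on the plane spanned by $e_{i-2},e_{i-1}$, there would be an eigenvector $v$ of $J_k$ for the eigenvalue $x_{kk}$ supported on $\{i-2,i-1\}$; writing out the eigenvector recursion of $J_k$ at index $i-3$ (present when $i\ge3$) gives $a_{i-2}v_{i-2}=0$, and at index $i$ (present when $i\le k-1$) gives $a_iv_{i-1}=0$, so for $k\ge3$ one coordinate of $v$ must vanish and $v$ reduces to a multiple of a single $e_j$ — impossible, since $J_ke_j$ always has a nonzero component away from the $j$-th coordinate. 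Hence $x_{kk}>\lambda_{\max}(M_i)$, and the mirror argument (with $\lambda_{\min}$ in place of $\lambda_{\max}$) gives $x_{1k}<\lambda_{\min}(M_i)$; equivalently one may peel one boundary row and column off $J_k$ at a time and invoke the classical strict interlacing of the zeros of consecutive orthonormal polynomials and of their numerator polynomials. The one genuine obstacle is this strictness step; everything else is elementary spectral theory of a $2\times 2$ block, and for the asymptotic applications in the later sections even the weak forms would suffice.
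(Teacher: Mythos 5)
Your proof is correct, but it takes a genuinely different route from the paper's. The paper argues directly on the monic recurrence: for $x\ge x_{kk}$ the truncations $q_i(x)$, $i\le k$, are nonnegative, so $q_{i-2}(x)>q_{i-1}(x)/(x-b_{i-2})$, and feeding this back into the recurrence forces $x-b_{i-1}-a_{i-1}^2/(x-b_{i-2})>0$, i.e.\ $x$ lies strictly to the right of the larger root of $(x-b_{i-1})(x-b_{i-2})=a_{i-1}^2$ --- which is exactly $\lambda_{\max}$ of your $M_i$. So the two arguments establish the same quadratic inequality: yours by Rayleigh--Ritz on the $2\times2$ principal submatrix of (\ref{jacmatrix}) on consecutive indices (in effect restricting the variational formula (\ref{levshM}) of Section~\ref{sec5} to vectors supported on two consecutive coordinates, a two-dimensional upgrade of the remark after Theorem~\ref{prop1}), the paper by a one-step continued-fraction truncation. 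Your version generalizes cleanly to larger consecutive blocks; the paper's iterates in the other direction (deeper truncations), as it remarks right after the lemma. Your strictness argument via irreducibility of $J_k$ is sound and in fact more explicit than the paper's, which obtains strictness from $a_{i-2}^2q_{i-3}(x)>0$ and therefore also silently needs $k\ge3$; as you observe, for $k=2$ one has $M_2=J_2$ and the first inequality degenerates to an equality, so the strict form of the lemma requires $k\ge3$ under either proof. A last cosmetic point: what both arguments actually deliver on the right-hand sides is $\max(b_0,\dots,b_{k-1})$, the maximum over the diagonal of $J_k$, rather than $\max_{i\le k}b_i$ as printed, but that is what is used in the sequel.
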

\begin{proof}
We prove the first inequality, the second one is similar. Consider the corresponding monic polynomials defined by
$$q_i(x)=(x-b_{i-1})q_{i-1}(x)-a_{i-1}^2 q_{i-2}(x) .$$
Let $x \ge x_{k,k},$ then all the polynomials $q_i(x), \; \; i \le k,$ are positive and therefore
$q_{i-1}(x) > \frac{q_i(x)}{x-b_{i-1}} \, .$  Hence
$$0\le q_i(x) <(x-b_{i-1})q_{i-1}(x)-a_{i-1}^2  \frac{q_{i-1}(x)}{x-b_{i-2}}=
q_{i-1}(x) \left( x-b_{i-1}-  \frac{a_{i-1}^2}{x-b_{i-2}}\right).$$
The expression in the brackets must be positive yielding that for any $x \ge x_{kk},$
$$x >  \frac{b_{i-1}+b_{i-2}+\sqrt{4a_{i-1}^2+(b_{i-1}-b_{i-2})^2}}{2} \, >  b_{i} , \; \; i \le k,$$
and the result follows.
\end{proof}
 Although the above result can be improved by iterating the arguments, it is probably rather weak in case of growing $a_i$ and $b_i .$ As far as we know there is no good lower (upper) bound on $x_{kk}$ ($x_{1k}$) in terms of the coefficients of three term recurrence.

Set $t_k=t_k(x)= \frac{{\bf p}_{k}}{{\bf p}_{k+1}} \, .$ Clearly, $t_k >0$ for $x > x_{k+1,k+1},$ and since $x_{k+1,k+1}>x_{k,k}$ the inequality $t_k  \ge 0$ for
$x >x_{k+1,k+1}$ implies $t_i >0$ for $x \ge x_{k+1,k+1}$ for all $i <k.$
\begin{theorem}
\label{t341}
Suppose that $a_k$ and $b_k$ are nondecreasing and for some $ j <k,$\\
$(i)$  $T_4 ({\bf p}_j) \ge 0$ for $x  \ge \max \{ x_{j+1,j+1} ,b_{j+1}+  a_{j+1} \} ,$\\
$(ii)$ for $ t \ge 0,$ the the following quadratics are nonnegative:
$$\mathcal{P}_i (t)= a_{i-1}  A_i + B_i t+ a_{i-1} C_i t^2, \; \; i=j+1, \ldots ,k,$$
where
$$A_i=3a_{i+1}a_{i+2}-4a_{i}a_{i+2}+a_{i-1} a_{i}, $$
  $$B_i=a_{i-1} (4a_{i+2}-a_{i-1})(b_{i}-b_{i-1})-a_{i}^2 (b_{i+1}-b_{i-2}), $$
 $$C_i=
 \left( a_i-  a_{i-1}+b_i-b_{i-1} \right)(b_{i+1}-b_{i-2})+(a_i-a_{i-1})(4a_{i+2}-a_{i-1})-$$
 $$a_{i+1}(a_{i+1}-a_{i-2}).$$
 $(iii)$ For $i=j, \ldots,k-1,$
 $$ a_i \ge b_{i+1}-b_i.$$

Then for $x > \max \{x_{k,k}, b_k+  a_k \} ,$
$$T_4 ({\bf p}_k) \ge 0. $$
\end{theorem}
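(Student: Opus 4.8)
The proof should proceed by induction on $k$, mirroring the structure of the symmetric case in Theorem \ref{tht4symmon} but now carried out in the orthonormal normalization and with the extra difficulty that the inequality $T_4({\bf p}_k)\ge 0$ is only asserted on the ray $x>\max\{x_{k,k},b_k+a_k\}$. The base case is hypothesis $(i)$ at level $j$: since $x_{j+1,j+1}>x_{j,j}$ and since the admissible ray for level $j$ contains the admissible ray for level $j+1$, we may start the induction at $j$. For the inductive step I would use the directly-checked identity of the type displayed just before Lemma \ref{cor2}, namely the asymmetric analogue
$$a_{k+2}a_{k+3}\,T_4({\bf p}_{k+1})=a_{k-1}a_{k+2}\,T_4({\bf p}_{k})+\Lambda_k\,T_2({\bf p}_{k})+\mathcal{E}_k^{(1)}{\bf p}_{k+1}^2+\mathcal{E}_k^{(2)}{\bf p}_{k}^2,$$
where $\Lambda_k$ is the coefficient $4a_{k+3}a_k-4a_{k+2}a_{k-1}+a_{k+2}^2-a_k^2$ appearing in Section \ref{sec3}, and where $\mathcal{E}_k^{(1)},\mathcal{E}_k^{(2)}$ are the stated cubics in the $a$'s corrected by the relevant differences of the $b$'s; the coefficients $A_i,B_i,C_i$ in hypothesis $(ii)$ are exactly what these correction terms collapse to after one divides through appropriately and substitutes $t_k={\bf p}_k/{\bf p}_{k+1}$.

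The mechanism is then: on the admissible ray for level $k+1$, all ${\bf p}_i(x)$ with $i\le k+1$ are positive (by Lemma \ref{center} this ray lies to the right of $x_{k+1,k+1}$), so $t_k>0$. Dividing the identity by ${\bf p}_{k+1}^2$ turns the last two terms into $\mathcal{E}_k^{(1)}+\mathcal{E}_k^{(2)}t_k^2$, and after one more division by a positive quantity (to match the normalization in $\mathcal{P}_i$) this is precisely $\mathcal{P}_{k+1}(t_k)\ge 0$ by hypothesis $(ii)$. The term $a_{k-1}a_{k+2}T_4({\bf p}_k)$ is nonnegative by the inductive hypothesis, provided the admissible ray for level $k+1$ is contained in that for level $k$ — this is where hypothesis $(iii)$, $a_k\ge b_{k+1}-b_k$, enters, ensuring $b_{k+1}+a_{k+1}$ (or rather the relevant threshold at level $k+1$) dominates $b_k+a_k$, while $x_{k+1,k+1}>x_{k,k}$ handles the zero thresholds. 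Finally the middle term $\Lambda_k T_2({\bf p}_k)$ is nonnegative: $T_2({\bf p}_k)=p_k^2-p_{k-1}p_{k+1}$ in orthonormal form equals (a positive multiple of) $S_2({\bf p}_k)$, which by Theorem \ref{turantwo} is $\ge 0$ for $x\ge b_k-2a_k$, hence certainly on our ray; and $\Lambda_k\ge 0$ because $a_k$ nondecreasing makes $4a_{k+3}a_k-4a_{k+2}a_{k-1}\ge 0$ while $a_{k+2}^2-a_k^2\ge 0$, or more carefully because $\Lambda_k$ is forced nonnegative by the monotonicity assumptions (this is the coefficient already observed to be $4a_{k+3}a_k-4a_{k+2}a_{k-1}+a_{k+2}^2-a_k^2$ in Section \ref{sec3}). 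Summing the four nonnegative contributions and dividing by $a_{k+2}a_{k+3}>0$ gives $T_4({\bf p}_{k+1})\ge 0$ on the required ray, completing the induction.

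The main obstacle, and the step I expect to require genuine care rather than bookkeeping, is verifying that the algebraic identity is correct with exactly the coefficients $A_i,B_i,C_i$ of hypothesis $(ii)$ after the substitution $t_k={\bf p}_k/{\bf p}_{k+1}$ — in particular tracking how the recurrence $a_k{\bf p}_k=(x-b_{k-1}){\bf p}_{k-1}-a_{k-1}{\bf p}_{k-2}$ is used to eliminate ${\bf p}_{k+2}$ (and ${\bf p}_{k-2}$) from $T_4({\bf p}_{k+1})$ and to re-express everything in terms of ${\bf p}_k,{\bf p}_{k+1}$ and $x$, then noting that the $x$-dependence must cancel except through the combinations $b_{i+1}-b_{i-2}$ and $b_i-b_{i-1}$ that appear in $B_i,C_i$. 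A secondary subtlety is the precise form of the threshold at each level: one needs the thresholds $\max\{x_{i,i},b_i+a_i\}$ to be nested decreasingly in $i$ on the relevant range, which is where hypotheses $(iii)$ and the interlacing $x_{i,i}<x_{i+1,i+1}$ are both needed, and where the slightly unusual choice $b_k+a_k$ (rather than $b_k+2a_k$) must be reconciled with Lemma \ref{thoneway}/Theorem \ref{turantwo} and with hypothesis $(i)$'s threshold $b_{j+1}+a_{j+1}$. Everything else — the positivity of $T_2$, the sign of $\Lambda_k$, the nonnegativity of $\mathcal{P}_{k+1}(t_k)$ — follows directly from the cited results once the identity is in hand.
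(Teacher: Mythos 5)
Your overall strategy coincides with the paper's: induction starting at level $j$ (hypothesis $(i)$ is the base case), a single algebraic identity expressing $a_{k+2}a_{k+3}T_4({\bf p}_{k+1})-a_{k-1}a_{k+2}T_4({\bf p}_k)$ as a nonnegative multiple of $T_2({\bf p}_k)$ plus a remainder which, in terms of $t_k={\bf p}_k/{\bf p}_{k+1}$, is controlled by the quadratic $\mathcal{P}_{k+1}$, and then the observation that every piece is nonnegative on the admissible ray. However, two concrete points in your sketch do not survive contact with the actual identity, and since that identity is the step you explicitly defer, they are gaps rather than bookkeeping. First, the template you write down has correction terms $\mathcal{E}_k^{(1)}{\bf p}_{k+1}^2+\mathcal{E}_k^{(2)}{\bf p}_k^2$ only; but $\mathcal{P}_{k+1}(t)$ contains the linear term $B_{k+1}t$, which corresponds to a cross term $B_{k+1}{\bf p}_k{\bf p}_{k+1}$, so no identity of the purely diagonal form you propose can collapse to hypothesis $(ii)$. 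In the paper the remainder (after setting $x=b_k+a_k+y^2$) is $a_k^{-1}{\bf p}_{k+1}^{2}\left(\mathcal{P}_{k+1}(t_k)+(a_{k+1}-a_k)(b_{k+2}-b_{k-1})y^2t_k^2\right)$, i.e.\ there is in addition an explicitly $x$-dependent residue, which happens to be manifestly nonnegative.

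Second, and more substantively, you assign hypothesis $(iii)$ to the ray-nesting step, where it is not needed: $b_{k+1}+a_{k+1}\ge b_k+a_k$ follows from monotonicity alone, and $x_{k+1,k+1}>x_{k,k}$ from interlacing. Where $(iii)$ actually enters is in the coefficient of $T_2({\bf p}_k)$, which is not your $\Lambda_k$ but $(b_{k+2}-b_{k-1})y^2+\mu_{k+1}$ with $\mu_{k+1}=a_{k+2}^2-a_k^2+4a_ka_{k+3}-4a_{k-1}a_{k+2}+(a_k+b_k-b_{k+1})(b_{k+2}-b_{k-1})$; the last summand is nonnegative precisely because $a_k\ge b_{k+1}-b_k$. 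With your $\Lambda_k$ the identity does not balance, and the leftover $(b_{k+2}-b_{k-1})(x-b_{k+1})$ is exactly where $(iii)$ earns its keep. A smaller point: $T_2({\bf p}_k)$ is not a positive multiple of $S_2({\bf p}_k)$ (the ratio of the middle coefficients is $a_{k+1}/a_k$); one can still deduce $T_2\ge S_2\ge 0$ when ${\bf p}_{k-1}{\bf p}_{k+1}\ge 0$ and $T_2>0$ trivially otherwise, but the paper avoids this by telescoping ${\bf p}_{k+1}^{-2}\left(a_{k+2}T_2({\bf p}_{k+1})-a_kT_2({\bf p}_k)\right)=(a_{k+1}-a_k)t_k^2+(b_{k+1}-b_k)t_k+a_{k+2}-a_{k+1}\ge 0$ for $t_k\ge 0$, which is all that is needed on the ray.
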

\begin{proof}
First observe that $T_2 ({\bf p}_k) >0 $ for $t_k \ge 0.$
Indeed, $T_2 ({\bf p}_0)=1$ and
$$ {\bf p}_{k+1}^{-2} \left( a_{k+2} T_2 ({\bf p}_{k+1})-a_k T_2 ({\bf p}_k )\right)=
(a_{k+1}-a_k)t_k^2+(b_{k+1}-b_k)t_k+a_{k+2}-a_{k+1} \ge 0.$$
Now we will apply induction on $k.$ Suppose that $T_4({\bf p}_k) \ge 0$ for $x \ge x_{k,k}.$
By the assumption $x \ge \max \{ x_{j+1,j+1} ,b_{j+1}+ a_{j+1} \}$ we can set
$x=b_{k}+  a_k+ y^2.$

The induction step is given by the following identity
$$a_{k+2}a_{k+3}T_4({\bf p}_{k+1})-a_{k-1}a_{k+2}T_4({\bf p}_k) -\left((b_{k+2}-b_{k-1})y^2+\mu_{k+1} \right)T_2({\bf p}_k)=$$
$$
a_k^{-1}{\bf p}_{k+1}^{-2} \left(\mathcal{P}_{k+1}(t_k) +(a_{k+1}-a_k)(b_{k+2}-b_{k-1})y^2 t_k^2\right) \ge 0,
$$
where
$$\mu_{k+1}=a_{k+2}^2-a_k^2+4a_k a_{k+3}-4a_{k-1}a_{k+2}+( a_k+b_k-b_{k+1})(b_{k+2}-b_{k-1}) \ge 0. $$
 It is left to notice that to start the induction it is enough to assume $T_4({\bf p}_j) \ge 0$
 for $x \ge \max \{ x_{j+1,j+1} ,b_{j+1}+ a_{j+1} \},$ rather than $x \ge \max \{ x_{j,j} ,b_{j}+ a_{j} \}.$
 \end{proof}
 Let us notice that one can readily establish many slightly different sufficient conditions implying $T_4 ({\bf p}_k) \ge 0. $ Besides the choice of normalization, one can use $S_2$ instead $T_2$ in the proof together with various lower bounds on the largest zero which can be obtained from formula (\ref{levshM}) of the next section.

 The following lemma shows that the restrictions on the initial conditions imposed by $(i)$ of  Theorem \ref{t341} are fulfilled for $j=1,$ provided $a_2\ge \frac{4}{3} \, a_1 .$
 \begin{lemma}
\label{43h}
Let $a_2\ge \frac{4}{3} \, a_1 ,$
then $T_4 ({\bf p}_1) \ge 0$ for $x \ge b_1 .$
 \end{lemma}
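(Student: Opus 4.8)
The plan is to reduce $T_4({\bf p}_1)$ to an explicit quadratic in $x$ and then display that quadratic as a sum of manifestly nonnegative terms. By the definition of $T_4$ we have $T_4({\bf p}_1)=3{\bf p}_1^2-4{\bf p}_0{\bf p}_2+{\bf p}_{-1}{\bf p}_3$, and the initial conditions ${\bf p}_{-1}=0$, ${\bf p}_0=1$ collapse this to $T_4({\bf p}_1)=3{\bf p}_1^2-4{\bf p}_2$. First I would use the orthonormal recurrence $a_k{\bf p}_k=(x-b_{k-1}){\bf p}_{k-1}-a_{k-1}{\bf p}_{k-2}$ together with $a_0=0$ to write down the first two polynomials explicitly: $a_1{\bf p}_1=x-b_0$ and $a_1a_2{\bf p}_2=(x-b_0)(x-b_1)-a_1^2$.

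Multiplying the inequality $T_4({\bf p}_1)\ge 0$ through by the positive factor $a_1^2a_2$, it becomes $3a_2(x-b_0)^2-4a_1\bigl((x-b_0)(x-b_1)-a_1^2\bigr)\ge 0$. Next I would set $u=x-b_0$ and invoke the standing assumption that $b_k$ is nondecreasing, so that $b_1-b_0\ge 0$; a short expansion rewrites the left-hand side as $(3a_2-4a_1)u^2+4a_1(b_1-b_0)u+4a_1^3$. For $x\ge b_1$ we have $u=x-b_0\ge b_1-b_0\ge 0$, and under the hypothesis $a_2\ge\frac{4}{3}a_1$ (equivalently $3a_2-4a_1\ge 0$) together with $a_1>0$, each of the three summands is nonnegative. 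Hence the whole expression is $\ge 0$, which gives $T_4({\bf p}_1)\ge 0$ and completes the proof.

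There is essentially no real obstacle here beyond organizing the elementary algebra correctly; the only point that needs care is the grouping of terms, chosen so that monotonicity of $b_k$ and the bound $a_2\ge\frac{4}{3}a_1$ do their work simultaneously. As a byproduct the same computation in fact yields $T_4({\bf p}_1)\ge 0$ for every $x\ge b_0$, which is slightly stronger than the stated range $x\ge b_1$ and specializes, when $b_0=b_1=0$, to the symmetric identity $a_1^2a_2T_4({\bf p}_1)=(3a_2-4a_1)x^2+4a_1^3$ recorded earlier.
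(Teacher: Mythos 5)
Your proof is correct and takes essentially the same route as the paper's: both reduce $T_4({\bf p}_1)=3{\bf p}_1^2-4{\bf p}_2$ to the explicit quadratic $a_1^2a_2\,T_4({\bf p}_1)$ and exhibit it as a sum of manifestly nonnegative terms, the only difference being your substitution $u=x-b_0$ in place of the paper's $x=b_1+y^2$ (the two resulting expressions are algebraically identical, and both rely on the standing assumption $b_1\ge b_0$). Your remark that the computation actually yields nonnegativity on the larger range $x\ge b_0$ is a correct minor strengthening.
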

\begin{proof}
The result follows from the explicit expression
$$a_1^2 a_2 T_4 ({\bf p}_1)=
(3a_2-4a_1)y^4+2  (3a_2-2a_1)(b_1-b_0)y^2+
4a_1^3 + 3 a_2(b_1-b_0 )^2,
$$
where we set $x=b_1+ y^2.$
 \end{proof}
 Let $r \ge 1, \;$\;  $0 \le s \le r+1,$ and let
\begin{equation}
\label{defm}
m=\frac{1}{\left(\frac{3}{2} \right)^{1/r}-1}-2 \ge 0.
\end{equation}
 To prove Theorem \ref{thmainzer} we shall verify the assumptions of Theorem \ref{t341} for the test sequences $a_i=\alpha_i$ and $b_i=\beta_i,$ defined by
\begin{equation}
\label{polseqa}
\alpha_0=0, \; \; \alpha_1=\frac{(m+2)^r}{2}, \; \; \alpha_i=(i+m)^r, \; \; i \ge 2;
\end{equation}
\begin{equation}
\label{polseqb}
 \beta_i=\gamma (i+m)^s, \; \; i \ge 0.
\end{equation}

Using the inequality
\begin{equation}
\label{inlog}
\frac{x}{1+ \frac{x}{2}} \le \ln(1+x) \le \frac{x}{1+ (\sqrt{2}-1) x} \, , \; \;  0 \le x \le 1,
\end{equation}
one finds
\begin{equation}
\label{ozenkam}
\frac{r}{\ln \frac{3}{2}}- \frac{5}{2} \le m \le \frac{r}{\ln \frac{3}{2}} - \sqrt{2}-1 .
\end{equation}

Obviously, for such a choice of $\alpha_i$ the assumption of Lemma \ref{43h} is fulfilled.
 Let us also notice the following easy to check properties of the sequence $\alpha_i:$
  for $i \ge 2,$
   \begin{equation}
  \label{usl1}
  \frac{\alpha_{i}}{\alpha_{i-1}} \;is \; nonincreasing,
 \end{equation}
 \begin{equation}
 \label{usl2}
  \alpha_{i}-2\alpha_{i-1}+\alpha_{i-2} \ge 0,
 \end{equation}
 \begin{equation}
 \label{usl3}
 \frac{\alpha_{i+1}}{\alpha_i} \le 2.
 \end{equation}

  First we show that for the chosen sequences of $\alpha_i, \beta_i$ the coefficients $A_i$ and $C_i$ of $\mathcal{P}_i (t)$ are nonnegative.
\begin{lemma}
\label{lemap}
Suppose that $a_i=\alpha_i$ and $b_i=\beta_i,$ then
for $i \ge 2,$
\begin{equation*}
\begin{array}{cc}
(i) & A_i  \ge 0,\\
&\\
(ii) & C_i  \ge (\alpha_i-\alpha_{i-1}+\beta_i- \beta_{i-1})(\beta_{i+1}-\beta_{i-2})\ge 0.
\end{array}
\end{equation*}
\end{lemma}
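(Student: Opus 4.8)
The plan is to prove the two claims directly from the definitions
$$A_i=3\alpha_{i+1}\alpha_{i+2}-4\alpha_i\alpha_{i+2}+\alpha_{i-1}\alpha_i,$$
$$C_i=(\alpha_i-\alpha_{i-1}+\beta_i-\beta_{i-1})(\beta_{i+1}-\beta_{i-2})+(\alpha_i-\alpha_{i-1})(4\alpha_{i+2}-\alpha_{i-1})-\alpha_{i+1}(\alpha_{i+1}-\alpha_{i-2}),$$
exploiting the monotonicity properties (\ref{usl1})--(\ref{usl3}) of the test sequence and the definition (\ref{defm}) of $m$. For part $(i)$ I would first dispose of the single special index $i=2$, where $\alpha_1=(m+2)^r/2$ rather than $(m+2)^r$: here $A_2=3\alpha_3\alpha_4-4\alpha_2\alpha_4+\tfrac12(m+2)^r\alpha_2$, and since $\alpha_3/\alpha_2\ge(3/2)^{?}$ must be checked against $(3/2)$ via (\ref{defm}), a short direct estimate settles it. For $i\ge 3$ all $\alpha$'s are pure powers $(\cdot+m)^r$; I would factor $A_i=\alpha_{i+2}\bigl(3\alpha_{i+1}-4\alpha_i\bigr)+\alpha_{i-1}\alpha_i$ and note that the definition of $m$ is designed precisely so that $\alpha_{i+1}/\alpha_i=\bigl((i+1+m)/(i+m)\bigr)^r$ is largest at $i=2$, where it equals $\bigl((m+3)/(m+2)\bigr)^r$; monotonicity (\ref{usl1}) then forces $3\alpha_{i+1}-4\alpha_i\ge \alpha_i\bigl(3\cdot\tfrac{3}{2}^{\,?}-4\bigr)$ — one must verify $3(\alpha_{i+1}/\alpha_i)\ge 4$ fails in general, so instead the positive term $\alpha_{i-1}\alpha_i$ must absorb the deficit. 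Concretely I expect to rewrite $A_i/(\alpha_i\alpha_{i+1})=3-4\alpha_{i+2}/\alpha_{i+1}\cdot(\alpha_{i+2}/\alpha_i)^{-1}\cdot\dots$ and reduce to a one-variable inequality in $u_i=(i+m)/(i+1+m)\in(0,1)$ that is checked by elementary calculus, using $u_i\le u_2=(m+2)/(m+3)$ and the bound (\ref{ozenkam}) on $m$.

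For part $(ii)$ the stated inequality $C_i\ge(\alpha_i-\alpha_{i-1}+\beta_i-\beta_{i-1})(\beta_{i+1}-\beta_{i-2})$ is equivalent, after cancelling the common $\beta$-product term, to
$$(\alpha_i-\alpha_{i-1})(4\alpha_{i+2}-\alpha_{i-1})\ge\alpha_{i+1}(\alpha_{i+1}-\alpha_{i-2}),$$
so the $\beta$'s drop out entirely and this is a pure statement about the sequence $\alpha_i$. Then the second, easier half — that the right-hand side $(\alpha_i-\alpha_{i-1}+\beta_i-\beta_{i-1})(\beta_{i+1}-\beta_{i-2})$ is itself $\ge 0$ — follows because $\alpha_i$ is strictly increasing (so $\alpha_i-\alpha_{i-1}>0$) and $\beta_i=\gamma(i+m)^s$ is nondecreasing in $i$ for $\gamma\ge 0,\ s\ge 0$ (so both $\beta_i-\beta_{i-1}\ge 0$ and $\beta_{i+1}-\beta_{i-2}\ge 0$). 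I would handle the special index $i=2$ separately as above, and for $i\ge 3$ write everything in terms of $(i+m)^r$: setting $n=i+m$, the inequality becomes
$$\bigl(n^r-(n-1)^r\bigr)\bigl(4(n+2)^r-(n-1)^r\bigr)\ge(n+1)^r\bigl((n+1)^r-(n-2)^r\bigr),$$
which I would prove by comparing term-by-term, using convexity of $t\mapsto t^r$ for $r\ge 1$ to bound $(n+1)^r-(n-2)^r\le 3\cdot(n+1)^{r-1}r\cdot(\text{something})$ versus $n^r-(n-1)^r\ge r(n-1)^{r-1}$, together with the crude bound $4(n+2)^r\ge 4n^r$ and $(n+1)^r\le(3/2)n^r$ — the last coming from (\ref{usl3}), i.e. $\alpha_{i+1}/\alpha_i\le 2$, and in fact the sharper $\le(m+3/m+2)^r\le 3/2$ from the definition of $m$.

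The main obstacle I anticipate is that neither $A_i\ge0$ nor the reduced form of $C_i\ge0$ holds "coordinatewise"; both require the precise calibration $(\,(m+3)/(m+2)\,)^r\le 3/2$ built into (\ref{defm}) and then a monotonicity argument showing the worst case is the smallest relevant index. So the real work is: (a) verify the boundary index $i=2$ by hand, which is messy because $\alpha_1$ is halved; (b) for $i\ge 3$, substitute $n=i+m$, clear denominators, and reduce each claim to a single inequality of the form $f(u)\ge 0$ on $0<u\le u_2$ where $u=(n-1)/n$ or $u=n/(n+1)$, then check $f$ via its derivative and the endpoint value, invoking the $m$-bounds (\ref{ozenkam}). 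I would also keep in mind that (\ref{usl1})--(\ref{usl3}) are exactly the ancillary facts already recorded for this purpose, so the proof should be phrased to cite them rather than rederive power-function estimates from scratch; the only genuinely new inequality needed is the quantitative $3\alpha_{i+1}+\alpha_{i-1}\ge 4\alpha_i$-type bound for $A_i$, which is where the definition of $m$ earns its keep.
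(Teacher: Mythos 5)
Your reduction of part (ii) to the pure $\alpha$-inequality $(\alpha_i-\alpha_{i-1})(4\alpha_{i+2}-\alpha_{i-1})\ge\alpha_{i+1}(\alpha_{i+1}-\alpha_{i-2})$, and your observation that the remaining product is nonnegative by monotonicity of $\alpha_i$ and $\beta_i$, are exactly right and match the paper. But the core of the lemma --- positivity of $A_i$ and of this reduced expression --- is left unproven: ``a short direct estimate settles it'', ``compare term-by-term'', ``check $f$ via its derivative and the endpoint value'' are plans, not arguments. Worse, the specific estimates you sketch would not close the gap, because both inequalities are tight to second order in $1/n$. For instance $A_i/(\alpha_i\alpha_{i+2})=3(1+u)^r+\left(\frac{1-u}{1+2u}\right)^r-4$ with $u=1/n$ vanishes together with its first derivative at $u=0$; and for (ii) the first-order bounds you propose give a left side at least $3r n^r(n-1)^{r-1}$ against a right side at most $3r(n+1)^{2r-1}$, and $n^r(n-1)^{r-1}\ge(n+1)^{2r-1}$ is false. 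You flag this obstacle yourself (``neither holds coordinatewise'') but do not resolve it, so the proposal as written has a genuine gap precisely where the work is.

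The paper closes that gap without any power-function calculus and without splitting off $i=2$: it argues purely from the recorded structural properties, namely convexity $\alpha_{i+1}-2\alpha_i+\alpha_{i-1}\ge0$ (that is (\ref{usl2})), log-concavity $\alpha_{i+1}^2\ge\alpha_i\alpha_{i+2}$ (equivalent to (\ref{usl1})), and the ratio bound (\ref{usl3}). For (i), substituting $\alpha_{i-1}\ge2\alpha_i-\alpha_{i+1}$ and regrouping yields
$$A_i\ge 2(\alpha_{i+1}-\alpha_i)(\alpha_{i+2}-\alpha_i)+\alpha_{i+1}(\alpha_{i+2}-2\alpha_{i+1}+\alpha_i)+2\left(\alpha_{i+1}^2-\alpha_i\alpha_{i+2}\right)\ge 4(\alpha_{i+1}-\alpha_i)^2\ge0.$$
For (ii), two applications of convexity eliminate $\alpha_{i-2}$ and $\alpha_{i+2}$, the resulting expression is decreasing in $\alpha_{i-1}$, and the substitution $\alpha_{i-1}\le\alpha_i^2/\alpha_{i+1}$ reduces everything to $\alpha_i^2+5\alpha_i\alpha_{i+1}-\alpha_{i+1}^2\ge0$, i.e.\ $\alpha_{i+1}/\alpha_i\le(5+\sqrt{29})/2$, which holds since that ratio never exceeds $2$. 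If you wish to keep your route you must actually carry out the second-order analysis of the one-variable functions on the admissible range of $u$; the far cleaner fix is to run the convexity/log-concavity argument above, which is exactly what (\ref{usl1})--(\ref{usl3}) were recorded for.
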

\begin{proof}
{\it (i)} Applying  $\alpha_{i-1} \ge 2\alpha_i-\alpha_{i+1}$ we obtain
$$3\alpha_{i+1}\alpha_{i+2}-4 \alpha_{i}\alpha_{i+2}+\alpha_{i-1}\alpha_{i} \ge$$
$$ 2(\alpha_{i+1}-\alpha_{i})(\alpha_{i+2}-\alpha_i)+
\alpha_{i+1}\alpha_{i+2}-2\alpha_{i}\alpha_{i+2}+\alpha_{i}\alpha_{i+1} = $$
$$2(\alpha_{i+1}-\alpha_{i})(\alpha_{i+2}-\alpha_i)+ \alpha_{i+1} (\alpha_{i+2}-2\alpha_{i+1}+\alpha_{i})+2(\alpha_{i+1}^2-\alpha_i \alpha_{i+2}) \ge$$
$$  2(\alpha_{i+1}-\alpha_{i})(\alpha_{i+2}-\alpha_i) \ge 4(\alpha_{i+1}-\alpha_{i})^2.$$

{\it (ii)} It is enough to show that
$$(\alpha_{i}-\alpha_{i-1})(4\alpha_{i+2}-\alpha_{i-1}) - \alpha_{i+1}(\alpha_{i+1}-\alpha_{i-2}) \ge 0.$$

Applying in turn $\alpha_{i-2} \ge 2\alpha_{i-1}-\alpha_i,$ and then $\alpha_{i+2} \ge 2\alpha_{i+1}-\alpha_i,$
 we obtain
$$
(\alpha_{i}-\alpha_{i-1})(4\alpha_{i+2}-\alpha_{i-1}) - \alpha_{i+1}(\alpha_{i+1}-\alpha_{i-2}) \ge
$$
$$
(\alpha_{i}-\alpha_{i-1})(4\alpha_{i+2}-\alpha_{i+1})-\alpha_{i-1}(\alpha_{i+1}+\alpha_i-2\alpha_{i-1}) \ge
$$
$$
(\alpha_{i}-\alpha_{i-1})(8\alpha_{i+1}-4\alpha_{i}-\alpha_{i-1})-
\alpha_{i+1}(\alpha_{i+1}+\alpha_i-2\alpha_{i-1}):=f.
$$
Since
$$\frac{\partial f }{\partial \alpha_{i-1}} =2\alpha_{i-1}+3\alpha_i-6\alpha_{i+1} < 0,$$
by using $\alpha_{i-1} \le \frac{\alpha_i^2}{\alpha_{i+1}},$ we get
$$f \ge  \frac{(\alpha_{i+1}-\alpha_{i})^2}{\alpha_{i+1}^2} \, (\alpha_i^2+5\alpha_i \alpha_{i+1}-\alpha_{i+1}^2) \ge 0,$$
for
$$\frac{\alpha_{i+1}}{\alpha_i} \le \frac{5+ \sqrt{29}}{2}\, . $$
\end{proof}


To check the rest of the conditions of Theorem \ref{t341}, in particularly that
$\mathcal{P}_i (t)\ge 0$ for $t \ge 0$ we need two following elementary inequality.

Since for $0 \le x <y <1,$ the function $\frac{1-x^q}{1-y^q}$ is decreasing in $q,$
we have
for $q\ge 1,$
\begin{equation}
\label{eqner}
\frac{1-x^q}{1-y^q}\le \frac{1-x}{1-y}\, ,
\end{equation}
whereas for $0 \le q < 1,$
\begin{equation}
\label{bernoulli}
\frac{1-x^q}{1-y^q} \le \lim_{q \rightarrow 0}\frac{1-x^q}{1-y^q} =\frac{\log x}{\log y} \, .
\end{equation}
We also need the following version of Bernoulli's inequality
\begin{equation}
\label{ineqbern}
(1+x)^c \le 1+c x(1+x)^{c-1}, \; \; x \ge -1, \; \; c \ge 1,
\end{equation}
which holds by
$$x \, \frac{d}{dx} \left( (1+x)^c - c x(1+x)^{c-1} -1\right)= c(1-c)x^2(1-x)^{c-2} \le 0,$$
along with the usual Bernoulli inequality
\begin{equation}
\label{usualbern}
(1+x)^c \le 1+c \, x, \; \; x >-1, \; \; 0 \le c \le 1.
\end{equation}

We shall impose the following restriction on the parameter $\gamma$ to satisfy the condition $(iii)$ of Theorem \ref{t341}.
\begin{lemma}
\label{lemgamma}
Let
\begin{equation}
\label{gambound}
\gamma \le
\left\{
\begin{array}{cc}
\frac{ (m+2)^{r-s+1}}{2s} \, , & 0 <s <1,\\
&\\
\frac{ (m+3)^{r-s+1}}{3s} \, , & 1 \le s \le r+1.
\end{array}
\right.
\end{equation}
then
$ \alpha_i \ge \beta_{i+1}-\beta_i, \; \; i \ge 1,$
\end{lemma}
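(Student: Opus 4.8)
The plan is to reduce the claim to a single pointwise estimate and then dispatch it with the Bernoulli-type inequalities (\ref{usualbern}) and (\ref{ineqbern}). Since $a_i=\alpha_i$ and $b_i=\beta_i=\gamma(i+m)^s$, the inequality $\alpha_i\ge\beta_{i+1}-\beta_i$ is
$$\alpha_i\ \ge\ \gamma\bigl((i+1+m)^s-(i+m)^s\bigr),\qquad i\ge 1 .$$
Writing $(i+1+m)^s-(i+m)^s=(i+m)^s\bigl((1+\tfrac1{i+m})^s-1\bigr)$, I would bound the bracket from above: for $0<s\le 1$, inequality (\ref{usualbern}) gives $(1+\tfrac1{i+m})^s-1\le\tfrac{s}{i+m}$, so the difference is at most $s(i+m)^{s-1}$; for $1\le s\le r+1$, inequality (\ref{ineqbern}) with $x=\tfrac1{i+m}$ and $c=s$ gives $(1+\tfrac1{i+m})^s\le 1+\tfrac{s}{i+m}(1+\tfrac1{i+m})^{s-1}$, so the difference is at most $s(i+1+m)^{s-1}$.

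Next I would split off the range $i\ge 2$, where $\alpha_i=(i+m)^r$. In the case $0<s<1$ it suffices that $\gamma\le(i+m)^{r-s+1}/s$; since $r-s+1>0$ the right-hand side increases with $i$, so its minimum over $i\ge 2$ is $(m+2)^{r-s+1}/s$, which is comfortably larger than the hypothesis (the spare factor $\tfrac12$ there being reserved for the endpoint below). In the case $1\le s\le r+1$ it suffices that $\gamma\le(i+m)^r/\bigl(s(i+1+m)^{s-1}\bigr)$; one checks this right-hand side is again nondecreasing in $i$ (using $r\ge s-1\ge 0$), so it is enough to treat $i=2$, and here the identity $\bigl(\tfrac{m+3}{m+2}\bigr)^r=\tfrac32$ — which is exactly the content of (\ref{defm}) — together with $0\le s-1\le r$ turns the required inequality into $\bigl(\tfrac{m+3}{m+2}\bigr)^r\le 3$, valid with room to spare; this is where the constants $m+3$ and $3$ in the hypothesis enter.

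The remaining, and genuinely delicate, case is $i=1$, where $\alpha_1=\tfrac12(m+2)^r$ is only half of $\alpha_2$ and so is not covered by the monotonicity argument. Here one must show $\tfrac12(m+2)^r\ge\gamma\bigl((m+2)^s-(m+1)^s\bigr)$; I would bound $(m+2)^s-(m+1)^s$ by the appropriate Bernoulli estimate and feed in the hypothesis, reducing everything to an elementary inequality between $m$ and $s$ which the defining relation for $m$ (equivalently the two-sided bound (\ref{ozenkam})) is tailored to make hold, the particular constants $2$ (for $0<s<1$) and $3$ (for $1\le s\le r+1$) being chosen so that the half-size leading coefficient $\alpha_1$ still dominates the first step of the sequence $\beta$. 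I expect this endpoint to be the main obstacle: the $i\ge 2$ estimates carry slack, whereas at $i=1$ the inequality is tight enough that a crude application of Bernoulli does not suffice and one must use the sharper form together with the precise value of $m$ to close it.
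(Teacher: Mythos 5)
Your reduction and your treatment of $i\ge 2$ are sound and essentially identical to the paper's: the author likewise writes the claim as $\gamma\le\min_{i\ge1}\alpha_i/\bigl((m+i+1)^s-(m+i)^s\bigr)$, applies (\ref{usualbern}) for $0<s<1$ and (\ref{ineqbern}) for $s\ge1$, and locates the minimum at the first index. The gap is that you never actually close the case $i=1$, which is the whole point of the lemma --- it is where the constants $2$ and $3$ in (\ref{gambound}) are determined --- and your guess about how it closes is off. For $1\le s\le r+1$ nothing sharper is needed: (\ref{ineqbern}) gives $\beta_2-\beta_1\le\gamma s(m+2)^{s-1}$, so it suffices that $\gamma\le(m+2)^{r-s+1}/(2s)$, and this follows from (\ref{gambound}) because $\bigl(\tfrac{m+3}{m+2}\bigr)^{r-s+1}\le\bigl(\tfrac{m+3}{m+2}\bigr)^{r}=\tfrac32$ precisely when $s\ge1$; the ``precise value of $m$'' enters only through this identity, exactly as in your $i=2$ computation.

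For $0<s<1$, however, the endpoint you defer cannot be closed by any refinement of Bernoulli, because the inequality it reduces to is false: one would need $(m+2)^s-(m+1)^s\le s(m+2)^{s-1}$, and concavity of $x^s$ gives the strict reverse. Concretely, for $r=1$ one has $m=0$ and $\alpha_1=1$, while at the stated threshold $s=\tfrac12$, $\gamma=2^{3/2}$ one gets $\beta_2-\beta_1=2\sqrt2(\sqrt2-1)=4-2\sqrt2>1$. So your instinct that $i=1$ is the delicate point is correct, but the honest conclusion is that the $0<s<1$ bound in (\ref{gambound}) must be tightened (e.g.\ to $\gamma\le(m+1)^{1-s}(m+2)^r/(2s)$, which is what (\ref{usualbern}) applied at the correct base point $m+1$ actually yields); the paper's own computation obscures this by silently replacing $(m+2)^s-(m+1)^s$ with $(m+3)^s-(m+2)^s$ in the $i=1$ term. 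A proof that stops at ``the defining relation for $m$ is tailored to make it hold'' would not have detected this.
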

\begin{proof}
The sought bound on $\gamma $ is given by
$$ \min_{i \ge 1} \frac{\alpha_i}{(m+i+1)^s-(m+i)^s}=$$
$$
\min \left\{ \frac{(m+2)^{r-s}}{2 \left( \left( 1+ \frac{1}{m+2} \right)^s-1 \right)} \,, \; \min_{i \ge 2} \frac{(m+i)^{r-s}}{\left( 1+\frac{1}{m+i} \right)^s-1}, \right\} .$$
If  $0 <s <1,$ then
applying (\ref{usualbern}) we get
$$ \min \left\{ \frac{(m+2)^{r-s}}{2 \left( \left( 1+ \frac{1}{m+2} \right)^s-1 \right)} \,, \; \min_{i \ge 2} \frac{(m+i)^{r-s}}{\left( 1+\frac{1}{m+i} \right)^s-1}, \right\}
\ge $$
$$\min \left\{  \frac{(m+2)^{r-s+1}}{2s} \, , \; \min_{i \ge 2}  \frac{(m+i)^{r-s+1}}{s}  \right\} = \frac{(m+2)^{r-s+1}}{2s} \, .
$$
Similarly, on applying (\ref{ineqbern}) for  $1 \le s <r+1,$ we obtain
$$\gamma \le \min \left\{ \frac{(m+2)^r}{2s (m+3)^{s-1}}\, , \; \min_{i \ge 2}
 \frac{(m+i)^r}{s (m+i+1)^{s-1}}\right\} = \frac{(m+2)^r}{2s (m+3)^{s-1}} \, =$$
 $$  \frac{(m+3)^{r-s+1}}{2s} \left( \frac{m+2}{m+3}\right)^r= \frac{(m+3)^{r-s+1}}{3s}$$
\end{proof}

\begin{lemma}
\begin{equation}
\label{bdiffer}
\frac{\beta_{i+1}-\beta_{i-2}}{\beta_i-\beta_{i-1}} \le 3 \frac{\alpha_{i+1}}{ \alpha_i} \, , \end{equation}
for $i \ge 2$ if $s \ge 1,$ and for $i \ge 3$ if  $\;0 <s <1.$
\end{lemma}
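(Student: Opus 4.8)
The plan is to reduce the claimed inequality $\dfrac{\beta_{i+1}-\beta_{i-2}}{\beta_i-\beta_{i-1}}\le 3\,\dfrac{\alpha_{i+1}}{\alpha_i}$ to a statement purely about the ratio of two discrete differences of the power function $(i+m)^s$ against the ratio $(i+m+1)^r/(i+m)^r$, and then to estimate each side separately. Writing $n=i+m$ and substituting $\alpha_i=(i+m)^r=n^r$, $\alpha_{i+1}=(n+1)^r$ for $i\ge2$, and $\beta_j=\gamma(j+m)^s$, the factor $\gamma$ cancels and the inequality becomes
\begin{equation*}
\frac{(n+1)^s-(n-3)^s}{n^s-(n-1)^s}\le 3\left(\frac{n+1}{n}\right)^r .
\end{equation*}
So the first step is to record this reformulation, noting that for $i\ge2$ one has $n=i+m\ge m+2\ge2$ (and for $0<s<1$ we take $i\ge3$, i.e. $n\ge m+3\ge3$, so that $n-3\ge0$ and all the powers are well-defined and the differences have the right sign since $a_k$, $b_k$ are nondecreasing).

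The second step is to bound the left-hand side from above. The numerator $(n+1)^s-(n-3)^s$ is a difference over an interval of length $4$, and the denominator $n^s-(n-1)^s$ over an interval of length $1$; by convexity/concavity of $t\mapsto t^s$ one expects the numerator to be at most roughly $4$ times a value of the derivative and the denominator at least roughly a value of the derivative, giving a bound not much worse than $4$ times a ratio of the form $\big(\text{something}/\text{something}\big)^{s-1}$. Concretely I would split into the two cases. For $1\le s\le r+1$: use $(n+1)^s-(n-3)^s\le 4s(n+1)^{s-1}$ (mean value / monotonicity of the derivative) and $n^s-(n-1)^s\ge s(n-1)^{s-1}$, so the left side is at most $4\big(\tfrac{n+1}{n-1}\big)^{s-1}$, and one must check $4\big(\tfrac{n+1}{n-1}\big)^{s-1}\le 3\big(\tfrac{n+1}{n}\big)^r$. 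For $0<s<1$: the derivative of $t^s$ is decreasing, so $(n+1)^s-(n-3)^s\le 4s(n-3)^{s-1}$ and $n^s-(n-1)^s\ge s\,n^{s-1}$, giving the bound $4\big(\tfrac{n}{n-3}\big)^{1-s}$, and one must check $4\big(\tfrac{n}{n-3}\big)^{1-s}\le 3\big(\tfrac{n+1}{n}\big)^r$. In both subcases I would also handle small $n$ (near the left endpoint) directly if the asymptotic estimate is not yet tight there, since the quantity $\big(\tfrac{n+1}{n}\big)^r$ is largest precisely when $n$ is smallest.

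The third step, which I expect to be the real obstacle, is closing the gap between the constant $4$ coming from the crude difference estimate and the target constant $3$ on the right: the slack must be supplied by the factor $\big(\tfrac{n+1}{n}\big)^r$ being at least $\tfrac43$ times a ratio that is itself $\le1$ when $s\le1$ (or controllably small when $s>1$). This is exactly where the definition of $m$ enters: from \eqref{defm}, $\big(\tfrac{m+2}{m+3}\big)^r=\tfrac23$, equivalently $\big(\tfrac{m+3}{m+2}\big)^r=\tfrac32$, and more generally \eqref{usl3} gives $\alpha_{i+1}/\alpha_i\le2$ while the precise value $\big(1+\tfrac1{m+2}\big)^r=\tfrac32$ shows $\big(1+\tfrac1n\big)^r$ decreases from $\tfrac32$ (at $n=m+2$) toward $1$. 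So I would prove the worst case is $n=m+2$ (respectively $n=m+3$ in the $s<1$ case), where $3\big(\tfrac{n+1}{n}\big)^r=\tfrac92$ (resp. close to it), and verify $4\big(\tfrac{n+1}{n-1}\big)^{s-1}\le\tfrac92$ and the analogous inequality in the other case. For the monotonicity in $n$ of the ratio $\dfrac{\text{LHS}}{(n+1)^r/n^r}$ I would use the elementary inequalities already assembled in the paper — \eqref{eqner}, \eqref{bernoulli} for ratios $\tfrac{1-x^q}{1-y^q}$, and the Bernoulli-type bounds \eqref{ineqbern}, \eqref{usualbern} — to turn the power-function differences into linear expressions, reducing everything to a finite check. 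The bookkeeping of which Bernoulli inequality applies in which regime of $s$ (and verifying the endpoint $n=m+2$ or $n=m+3$ using the estimates \eqref{ozenkam} on $m$) is the part that requires care rather than cleverness.
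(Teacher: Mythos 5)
Your reduction contains an indexing error that makes the target inequality false as you have written it. With $n=i+m$ you have $\beta_{i-2}=\gamma(i-2+m)^s=\gamma(n-2)^s$, not $\gamma(n-3)^s$; the numerator is a difference over an interval of length $3$, not $4$. With your version the claim at $s=1$ reads $4\le 3\left(\tfrac{n+1}{n}\right)^r$, which fails for all large $n$ since the right side tends to $3$ --- and this also shows your heuristic that the worst case sits at the smallest $n$ is backwards: the binding regime is $n\to\infty$, where $\left(\tfrac{n+1}{n}\right)^r\to1$ and there is no slack left to absorb any constant exceeding $3$. So the ``gap between $4$ and $3$'' you identify as the main obstacle is not closable; it is an artifact of the wrong index.

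Even after correcting the numerator to $(n+1)^s-(n-2)^s$, your plan of bounding numerator and denominator by two separate mean-value estimates is too lossy. For $s\ge1$ it yields $3\left(\tfrac{n+1}{n-1}\right)^{s-1}$, and comparing with $3\left(\tfrac{n+1}{n}\right)^r$ to first order in $1/n$ requires $2(s-1)\le r$, which fails for $s$ between $1+r/2$ and the allowed upper limit $r+1$. The paper avoids this by factoring out $\left(\tfrac{n+1}{n}\right)^s$ and applying the single ratio inequality (\ref{eqner}) to $\dfrac{1-\left(\frac{n-2}{n+1}\right)^s}{1-\left(\frac{n-1}{n}\right)^s}\le\dfrac{1-\frac{n-2}{n+1}}{1-\frac{n-1}{n}}=\dfrac{3n}{n+1}$, which gives exactly $3\left(\tfrac{n+1}{n}\right)^{s-1}\le3\left(\tfrac{n+1}{n}\right)^r$ with no constant to recover; the case $0<s<1$ is then handled separately via (\ref{bernoulli}) and (\ref{inlog}). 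You would need to adopt that sharper one-shot ratio estimate (or something equivalent) rather than two independent derivative bounds.
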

\begin{proof}
Putting $n=m+i \ge i$ and applying (\ref{eqner}) we find for $s \ge 1,$
$$\frac{\beta_{i+1}-\beta_{i-2}}{\beta_i-\beta_{i-1}}=
\frac{1-\left(\frac{n-2}{n+1} \right)^s}{1-\left(\frac{n-1}{n} \right)^s} \, \left(\frac{n+1}{n} \right)^s \le \frac{3n}{n+1} \,\left(\frac{n+1}{n} \right)^s \le $$
$$3\left(\frac{n+1}{n} \right)^r =\frac{3\alpha_{i+1}}{\alpha_i}.$$
Similarly, in order to prove the claim for $0 <s <1,$ and $i \ge 3,$ by $r \ge 1$ it is enough to show that
$$
\frac{1-\left(\frac{n-2}{n+1} \right)^s}{1-\left(\frac{n-1}{n} \right)^s}\, \left(\frac{n+1}{n} \right)^s \le 3 \, \frac{n+1}{n} \, ,$$
or equivalently
$$\frac{1-\left(\frac{n-2}{n+1} \right)^s}{1-\left(\frac{n-1}{n} \right)^s}\, \left(\frac{n+1}{n} \right)^{s-1} \le 3. $$
If $s \ge \frac{1}{2}$ then it is an easy exercise to check that for $n \ge 3,$
$$ \frac{1-\left(\frac{n-2}{n+1} \right)^s}{1-\left(\frac{n-1}{n} \right)^s}\, \left(\frac{n+1}{n} \right)^{s-1}  \le
\frac{1- \sqrt{\frac{n-2}{n+1} }}{1-\sqrt{\frac{n-1}{n} }} \le 3. $$
If $0 <s <\frac{1}{2}$ then by (\ref{bernoulli}) and (\ref{inlog}), where it is enough to take $2/5$ instead of $\sqrt{2}-1,$
 we obtain
$$ \frac{1-\left(\frac{n-2}{n+1} \right)^s}{1-\left(\frac{n-1}{n} \right)^s}\, \left(\frac{n+1}{n} \right)^{s-1}  \le \frac{\ln \frac{n+1}{n-2}}{\ln \frac{n}{n-1} } \sqrt{\frac{n}{n+1}} \le \frac{30n-15}{10n-8}\sqrt{\frac{n}{n+1}} <3.$$
This completes the proof.
\end{proof}

\begin{lemma}
\label{power}  Suppose that $a_i=\alpha_i$ and $b_i=\beta_i,$ then $\mathcal{P}_i (t)\ge 0 $  for $t \ge 0,$
and $i \ge 1.$
\end{lemma}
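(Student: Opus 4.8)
The plan is to show that the quadratic $\mathcal{P}_i(t)=\alpha_{i-1}A_i+B_it+\alpha_{i-1}C_it^2$ is nonnegative for all $t\ge 0$ by distinguishing the sign of the linear coefficient $B_i$. By Lemma \ref{lemap} we already know $A_i\ge 0$ and $C_i\ge 0$ (the latter with the sharper lower bound $C_i\ge(\alpha_i-\alpha_{i-1}+\beta_i-\beta_{i-1})(\beta_{i+1}-\beta_{i-2})$), so the leading and constant terms are already nonnegative. If $B_i\ge 0$ there is nothing to prove, so the whole content is the case $B_i<0$, where one must beat the cross term. Here I would use the standard discriminant criterion: $\mathcal{P}_i(t)\ge 0$ for all real $t$ (a fortiori for $t\ge 0$) as soon as $B_i^2\le 4\alpha_{i-1}^2A_iC_i$, i.e. $|B_i|\le 2\alpha_{i-1}\sqrt{A_iC_i}$. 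So the goal reduces to the single scalar inequality $|B_i|\le 2\alpha_{i-1}\sqrt{A_iC_i}$ whenever $B_i<0$.

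To make this tractable I would bound each of the three quantities from the appropriate side in terms of $\alpha_i$ and the $\beta$-differences. From the proof of Lemma \ref{lemap}(i) one has the lower bound $A_i\ge 4(\alpha_{i+1}-\alpha_i)^2\alpha_{i-1}^{-1}$ after dividing out — more precisely $\alpha_{i-1}A_i\ge 4(\alpha_{i+1}-\alpha_i)^2$ is not quite the form needed, so instead I would keep $A_i\ge 2(\alpha_{i+1}-\alpha_i)(\alpha_{i+2}-\alpha_i)$; for $C_i$ use the lower bound from Lemma \ref{lemap}(ii); and for $|B_i|=\alpha_i^2(\beta_{i+1}-\beta_{i-2})-\alpha_{i-1}(4\alpha_{i+2}-\alpha_{i-1})(\beta_i-\beta_{i-1})$ (when this is positive) I would drop the subtracted term to get $|B_i|\le\alpha_i^2(\beta_{i+1}-\beta_{i-2})$. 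Feeding these into the discriminant condition, and invoking the estimate $\beta_{i+1}-\beta_{i-2}\le 3(\alpha_{i+1}/\alpha_i)(\beta_i-\beta_{i-1})$ from Lemma \ref{bdiffer} to replace $\beta_{i+1}-\beta_{i-2}$, the inequality should collapse to something purely about the monotone power sequence $\alpha_i=(i+m)^r$ — roughly that $\alpha_i^4\,(\alpha_{i+1}/\alpha_i)^2\lesssim \alpha_{i-1}^2\cdot(\alpha_{i+1}-\alpha_i)(\alpha_{i+2}-\alpha_i)\cdot\big(\alpha_i-\alpha_{i-1}+\beta_i-\beta_{i-1}\big)/(\beta_i-\beta_{i-1})$, which one checks using \eqref{usl1}, \eqref{usl2}, \eqref{usl3} together with the Bernoulli-type inequalities \eqref{eqner}, \eqref{bernoulli}, \eqref{ineqbern}, \eqref{usualbern} and the two-sided bound \eqref{ozenkam} on $m$. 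The small indices $i=1,2$ (and $i=3$ when $0<s<1$, where Lemma \ref{bdiffer} starts only at $i=3$) would be handled separately by the explicit formulas for $\alpha_1,\alpha_2,\alpha_3$ and the constraint on $\gamma$ from Lemma \ref{lemgamma}.

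The main obstacle I anticipate is purely computational: reducing the discriminant inequality $B_i^2\le 4\alpha_{i-1}^2A_iC_i$, after all the substitutions, to a clean elementary statement about $(i+m)^r$ and $(i+m)^s$ — the factor $\alpha_i-\alpha_{i-1}+\beta_i-\beta_{i-1}$ in the $C_i$ bound mixes the $r$ and $s$ scales, and one has to be careful that the $\beta$-contribution ($\gamma$-dependent, controlled by Lemma \ref{lemgamma}) does not overwhelm the $\alpha$-contribution. The case split $0<s<1$ versus $1\le s\le r+1$ will likely have to be carried through the whole estimate because \eqref{eqner} and \eqref{bernoulli} apply in the two regimes respectively, and because Lemma \ref{bdiffer}'s validity range depends on $s$. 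I would also keep an eye on whether $B_i$ can actually be negative for small $i$; if for $i=1,2$ one can show $B_i\ge 0$ outright from the explicit coefficient values, those base cases become trivial and only the generic $i$ requires the discriminant argument.
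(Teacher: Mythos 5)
Your overall frame (nonnegative leading and constant coefficients from Lemma \ref{lemap}, a case split on the sign of $B_i$, a discriminant estimate when $B_i<0$) is the right shape, but the way you propose to close the discriminant estimate contains a genuine gap, and your guess about which indices need which argument is exactly inverted. The bound $|B_i|\le \alpha_i^2(\beta_{i+1}-\beta_{i-2})$, obtained by dropping the positive term $\alpha_{i-1}(4\alpha_{i+2}-\alpha_{i-1})(\beta_i-\beta_{i-1})$, destroys the near-total cancellation inside $B_i$: with $n=i+m$ one has $\alpha_{i-1}(4\alpha_{i+2}-\alpha_{i-1})=3n^{2r}\left(1+O(n^{-1})\right)$ while, by (\ref{bdiffer}) holding with near-equality, $\alpha_i^2(\beta_{i+1}-\beta_{i-2})=3n^{2r}(\beta_i-\beta_{i-1})\left(1+O(n^{-1})\right)$ as well, so the true $B_i$ is smaller than your majorant by a factor of order $n$. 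At the same time $A_i$ is only of order $n^{2r-2}$ (its leading $n^{2r}$ terms cancel; the usable lower bound is $4(\alpha_{i+1}-\alpha_i)^2\asymp r^2n^{2r-2}$). Feeding your three bounds into $B_i^2\le 4\alpha_{i-1}^2A_iC_i$ gives a left side $\asymp \gamma^2s^2n^{4r+2s-2}$ against a right side $\lesssim r^2\gamma s\,n^{4r+s-3}(rn^{r-1}+\gamma sn^{s-1})$; the ratio grows like $n^{\,s-r+2}$ (like $n^{2}$ once $s\ge r$), so the inequality you would need is false for all large $i$ unless $s$ is far below $r$. Your proposed reduction therefore cannot be carried out.

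The repair, which is the paper's route, is to show that for $i\ge 3$ the dropped term actually dominates, i.e. $B_i\ge 0$, so that all three coefficients of $\mathcal{P}_i$ are nonnegative and no discriminant is needed for $t\ge 0$: via (\ref{bdiffer}) the claim $B_i\ge0$ reduces to the pure $\alpha$-inequality $4\alpha_{i-1}\alpha_{i+2}-\alpha_{i-1}^2\ge 3\alpha_i\alpha_{i+1}$, which follows from (\ref{usl2}) together with $\alpha_i/\alpha_{i-1}\le \alpha_3/\alpha_2=3/2$. The only index at which $B_i$ can be negative and a discriminant argument is required is $i=2$, and there one must keep $B_2$ intact as a function of the ratio $x=(\beta_3-\beta_0)/(\beta_2-\beta_1)$: the hypothesis $B_2<0$ forces $x>(8\alpha_4-\alpha_2)/(4\alpha_2)$, the discriminant is a quadratic in $x$ that is decreasing on the admissible range $x\le 3\alpha_3/\alpha_2$, and it is checked to be negative at that minimal $x$ (with a separate computation for $0\le s<1$, where (\ref{bdiffer}) is unavailable at $i=2$ and one instead uses Bernoulli's inequality and the bound on $\gamma$ from Lemma \ref{lemgamma}). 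So your closing speculation should be reversed: the small index $i=2$ is where the discriminant is unavoidable, and the generic $i\ge3$ is where $B_i\ge0$ holds outright.
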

\begin{proof}
By Lemmas \ref{43h} and \ref{lemap} we have $T_4 ({\bf p}_1) \ge 0$ for $x \ge b_1 ,$ and also $A_i  \ge 0, \; C_i \ge 0$ for $i \ge 2.$
We will show that $B_i \ge 0$ for $ i \ge 3,$ whereas for $i=2$ the discriminant
of $\mathcal{P}_2 (t) $ is negative, provided $B_2 <0.$

\noindent
$Case \; 1.$ $i \ge 3.$
It is enough to establish the inequality
\begin{equation}
\label{prover1}
\frac{\alpha_{i-1}(4\alpha_{i+2}-\alpha_{i-1})}{\alpha_i^2} \ge \frac{\beta_{i+1}-\beta_{i-2}}{\beta_i-\beta_{i-1}} \, ,
 \; \; i \ge 3.
\end{equation}
By (\ref{bdiffer}) we may replace this by
$$4 \alpha_{i-1}\alpha_{i+2}-\alpha_{i-1}^2 \ge 3\alpha_{i+1} \alpha_i.$$
Notice that by the definition of the sequence $\alpha_i ,$
$$\frac{\alpha_{i}}{\alpha_{i-1}} \le \frac{\alpha_3}{\alpha_2}=\frac{3}{2} \, .
$$
Finally, applying (\ref{usl2}) we can get rid in turn of $a_{i+2}$ and $a_{i+1}$ obtaining
$$4 \alpha_{i-1}\alpha_{i+2}-\alpha_{i-1}^2- 3\alpha_{i+1} \alpha_i \ge
a_{i+1}(8a_{i-1}-3a_i)-4a_{i-1}a_i-a_{i-1}^2 \ge$$
$$3(3a_{i-1}-2a_i)(a_i-a_{i-1}) \ge 0. $$

\noindent
$Case \; 2.$ $i = 2, \; \; s \ge 1.$ Put
$\beta_2- \beta_{1} =v, \; \; \beta_3- \beta_{0} = x v,$
where $x \le 3\alpha_3/\alpha_2\;$  by (\ref{bdiffer}). We may assume that $B_2 <0,$
otherwise there is nothing to prove. This yields
$$ \frac{1}{v} \, B_2=4\alpha_1 \alpha_3-\alpha_1^2-\alpha_2^2 x <0, $$
hence
$$x> \frac{4\alpha_1 \alpha_3-\alpha_1^2}{\alpha_2^2} = \frac{8\alpha_4-\alpha_2}{4 \alpha_2} \, .$$
Using $\alpha_1=\frac{\alpha_2}{2}$ and the bound on $C_2 \ge b_2-b_1$ which holds by Lemma \ref{lemap}, we estimate the discriminant of $\mathcal{P}_2 (t)$ as follows:
\begin{equation}
\label{disc}
\frac{1}{v^2} \, \left(B_2^2-4\alpha_{1}^2 A_2 C_2 \right)\le
\left(4\alpha_1\alpha_4-\alpha_1^2-\alpha_2^2  x \right)^2  -4\alpha_{1}^2
(3 \alpha_3 \alpha_4 -\alpha_2 \alpha_4 +\alpha_1 \alpha_2) x =
\end{equation}
$$
\frac{\alpha_2^2}{16} \left( 16\alpha_2^2 x^2 -48\alpha_3 \alpha_4 x +(8\alpha_4-\alpha_2)^2\right)
$$
Since $x \le \frac{3\alpha_3}{\alpha_2}$ the last quadratic is decreasing in $x.$
Indeed, for the derivative we get
$$32 x \, \alpha_2^2-48 \alpha_3 \alpha_4 \le  48\alpha_3 \alpha_2
( 2  - \frac{\alpha_4}{\alpha_2} \, ),$$
where
$$2  -  \frac{\alpha_4}{\alpha_2}= 2- \left( 2 \left( \frac{3}{2} \right)^{1/r} -1\right)^r
\le 0,$$
 easily follows by Jensen's inequality.

Plugging in the least possible value of $x= \frac{8\alpha_4-\alpha_2}{4 \alpha_2}$ into (\ref{disc}) we obtain
$$ \frac{16 \alpha_4 -2 \alpha_2}{\alpha_2 } \, (8 \alpha_2 \alpha_4-6  \alpha_3 \alpha_4-  \alpha_2^2).$$
Finally, applying (\ref{usl1}) and (\ref{usl2}) and
noticing that $\frac{\alpha_3}{\alpha_2} = \frac{3}{2}$ we get
$$8 \alpha_2 \alpha_4-6  \alpha_3 \alpha_4-  \alpha_2^2 \le
8 \alpha_3^2-6  \alpha_3 \alpha_4-  \alpha_2^2 \le
8 \alpha_3^2-6  \alpha_3 (2\alpha_3-\alpha_2) -  \alpha_2^2 \le$$
$$\alpha_2^2 \left(6  \, \frac{ \alpha_3}{\alpha_2} -4 \left( \frac{ \alpha_3}{\alpha_2}\right)^2 -1\right)=-1.$$
Hence, the discriminant is negative.

\noindent
$Case \; 3.$ $i = 2, \; \; 0 \le s < 1.$  Set as above $v=\beta_2-\beta_1.$
Then
$$\beta_3-\beta_0 \le 2v+ \beta_1-\beta_0. $$
By Bernoulli's inequality and Lemma \ref{lemgamma}
\begin{equation}
\label{b21}
v = \gamma \left((m+2)^s-(m+1)^s \right) \le \gamma s (m+1)^{s-1} \le \frac{(m+2)^r}{2} =\frac{\alpha_2}{2}  \,,
\end{equation}
\begin{equation}
\label{b10}
\beta_1-\beta_0 = \gamma \left((m+1)^s-m^s \right) \le \gamma s \, m^{s-1} \le \frac{(m+2)^r}{2}= \frac{\alpha_2}{2} \,.
\end{equation}
Assuming that $B_2$ is negative we obtain for the discriminant of $\mathcal{P}_2 (t),$
$$\frac{4}{\alpha_2^2 (\beta_3-\beta_0)} \, \left(B_2^2-4\alpha_{1}^2 A_2 C_2 \right)\le
4 \alpha_2^2 (\beta_3-\beta_0)-(\alpha_2+2v)(\alpha_2^2-8 \alpha_2 \alpha_4+6\alpha_3 \alpha_4) \le$$
$$ 4 \alpha_2^2 (2v+\beta_1-\beta_0)-(\alpha_2+2v)(\alpha_2^2-8 \alpha_2 \alpha_4+6\alpha_3 \alpha_4):=f.$$
Since
$$\frac{\partial f}{\partial v} =2(3 \alpha_2^2+8 \alpha_2 \alpha_4-6\alpha_3 \alpha_4)=
\alpha_2^2 \left( 6- \left( 2 \cdot 3^{1/r}-2^{1/r} \right)^r \right) > 0, $$
on applying (\ref{b21}) and (\ref{b10}) we have
$$f \le 4\alpha_2 (\alpha_2^2 +4 \alpha_2 \alpha_4 -3 \alpha_3 \alpha_4)=
\alpha_2 ^3 \left(4-\left( 2 \cdot 3^{1/r}-2^{1/r} \right)^r \right) <0.$$
This completes the proof.
\end{proof}
Thus we obtained
\begin{lemma}
\label{seqab} The conditions of
Theorem \ref{t341} are fulfilled for orthonormal polynomials defined by the sequences $\alpha_i$ and $\beta_i,$ provided $\gamma$ satisfies (\ref{gambound}).
\end{lemma}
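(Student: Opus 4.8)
The plan is to verify the three hypotheses of Theorem \ref{t341} for the test sequences $a_i=\alpha_i$, $b_i=\beta_i$, taking $j=1$ (the smallest admissible value, since Lemma \ref{43h} supplies a usable base case $T_4({\bf p}_1)\ge 0$ but nothing simpler is available). Since the analytic content has already been established in the preceding lemmas, this proof is essentially an assembly. First I would record the standing monotonicity assumption of Theorem \ref{t341}: by (\ref{polseqa}) the sequence $\alpha_i$ is strictly increasing, because $\alpha_1=(m+2)^r/2<(m+2)^r=\alpha_2$ and $(i+m)^r$ is increasing for $i\ge 2$; and by (\ref{polseqb}) with $\gamma>0$ and $s\ge 0$ the sequence $\beta_i$ is nondecreasing.

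For hypothesis $(i)$ with $j=1$ I would invoke Lemma \ref{43h}: since $\alpha_2=(m+2)^r=2\alpha_1\ge\frac43\alpha_1$, that lemma yields $T_4({\bf p}_1)\ge 0$ for every $x\ge\beta_1$. The range actually demanded by $(i)$ is $x\ge\max\{x_{2,2},\,\beta_2+\alpha_2\}$, and since $\beta$ is nondecreasing and $\alpha_2>0$ we have $\beta_2+\alpha_2>\beta_1$, so that range is contained in $\{x\ge\beta_1\}$ and $(i)$ follows. Hypothesis $(ii)$ --- nonnegativity of $\mathcal{P}_i(t)$ for $t\ge 0$ and $i=2,\ldots,k$ --- is precisely Lemma \ref{power}. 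Hypothesis $(iii)$, namely $\alpha_i\ge\beta_{i+1}-\beta_i$ for $i=1,\ldots,k-1$, is exactly the conclusion of Lemma \ref{lemgamma}, whose hypothesis is the bound (\ref{gambound}) assumed in the statement (and which is vacuous when $s=0$). Having checked all three, the conditions of Theorem \ref{t341} are fulfilled, which is the assertion.

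I do not expect a genuine obstacle, since this is a packaging step: the delicate estimates live in Lemma \ref{power} --- which itself relies on Lemma \ref{lemgamma} through the bounds (\ref{b21}) and (\ref{b10}) --- and are already in place. The one detail that needs a word of care is the $x$-range in hypothesis $(i)$: Lemma \ref{43h} gives the inequality on the set $\{x\ge\beta_1\}$, whereas Theorem \ref{t341} asks for it only on the a priori smaller set $\{x\ge\max\{x_{2,2},\beta_2+\alpha_2\}\}$, so one must note $\beta_1\le\beta_2+\alpha_2$ to finish.
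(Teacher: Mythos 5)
Your proposal is correct and matches the paper's own (implicit) proof: the paper states this lemma with the words ``Thus we obtained'' immediately after Lemmas \ref{43h}, \ref{lemap}, \ref{lemgamma} and \ref{power}, i.e.\ it is exactly the assembly you describe, with $j=1$, condition $(i)$ from Lemma \ref{43h} (using $\alpha_2=2\alpha_1$), condition $(ii)$ from Lemma \ref{power}, and condition $(iii)$ from Lemma \ref{lemgamma}. Your extra remark about the $x$-range in $(i)$ being contained in $\{x\ge\beta_1\}$ is a correct and worthwhile clarification.
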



\section{Zeros}
\label{sec5}
Let $\{ \bf{p}_k(x) \}, $ be a family of orthonormal
polynomials defined by the three term recurrence (\ref{grec}).
 It is well known that the zeros $x_{1,k}<x_{2,k}<...<x_{k,k}$ of $ p_k$
coincide with the eigenvalues of the corresponding Jacobi matrix
\begin{equation}
\label{jacmatrix}
\left(
\begin{array}{cccccc}
b_0 & a_1 & 0& 0& \hdots & 0\\
a_1 & b_1& a_2 & 0 & \hdots &0\\
0& a_2 & b_2 & a_3 & \hdots & 0\\
\vdots& & \ddots & \ddots & \ddots &\vdots\\
0& 0& \hdots &a_{k-2} & b_{k-2} & a_{k-1}\\
0& 0& \hdots &0 & a_{k-1} & b_{k-1}\\
\end{array}
\right)
\end{equation}
If we assume that the sequences $a_i$ and $b_i$ are nondecreasing then by
the Gershgorin theorem we have
\begin{equation}
\label{gershmax}
x_{k,k} \le \max \left(b_{k-2}+a_{k-2}+a_{k-1},b_{k-1}+a_{k-1}  \right).
\end{equation}
In many cases this inequality gives the main term of the corresponding asymptotics. To obtain a better bound requires much more efforts.

The Rayleigh quotient for the extreme eigenvalues of the
corresponding Jacobi matrix yields the
following elegant representation for the extreme zeros (see e.g.
\cite{freud}, \cite{hj}, \cite{leven}):

\begin{equation}
\label{levshm}
x_{1k}=\min \sum_{i=0}^{k-1} \left(b_i x_i^2-2 a_{i} x_i x_{i-1} \right) ,
\end{equation}
\begin{equation}
\label{levshM}
x_{kk}=\max \sum_{i=0}^{k-1}\left( b_i x_i^2+2 a_{i} x_i x_{i-1}\right) ,
\end{equation}
where the extrema are taken over all (or only over positive) $x_0,x_1,...,x_{k-1},$ subjected to
$\sum_{i=0}^{k-1} x_i^2 =1,$ and $x_{-1}=0.$

This implies that for perturbed orthonormal polynomials
$$ {\tilde a}_{k} {\tilde p}_{k}=(x-{\tilde b}_{k-1}){\tilde p}_{k-1}-{\tilde a}_{k-1} {\tilde p}_{k-2},$$
such that $|a_i -{\tilde a}_i| < \epsilon, $ and $ |b_i -{\tilde b}_i| < \delta$ for $ i \le k-1,$
we have
\begin{equation}
\label{pert}
|x_{k,k}-{\tilde x}_{k,k}| <2 \epsilon +\delta,
 \end{equation}
In particular we have the following claim.
\begin{lemma}
\label{lemreduction}
The statement of Theorem \ref{thmainzer} holds, provided it holds for the sequences
$\alpha_i$ and $\beta_i$ defined by (\ref{polseqa}) and (\ref{polseqb}) respectively.
\end{lemma}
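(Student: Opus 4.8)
The plan is to deduce this stability statement from the eigenvalue perturbation estimate (\ref{pert}) applied to the Jacobi matrix (\ref{jacmatrix}). Since the zeros of $p_k$ do not depend on the normalization, I would first pass to the orthonormal case $p_k={\bf p}_k$, so that $x_{k,k}$ is the largest eigenvalue of the matrix (\ref{jacmatrix}) formed from $a_i,b_i$; this is then to be compared with the largest eigenvalue $\tilde x_{k,k}$ of the matrix formed from the test sequences $\alpha_i,\beta_i$ of (\ref{polseqa})--(\ref{polseqb}).

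Next I would record that $\{\alpha_i\}$ and $\{\beta_i\}$ themselves satisfy the hypotheses of Theorem \ref{thmainzer} with the \emph{same} parameters $r,s,\gamma,\delta,m,\rho$. Indeed, for $i\ge 2$ the binomial expansion gives $\alpha_i=(i+m)^r=i^r+O(i^{r-1})$ and $\beta_i=\gamma(i+m)^s=\gamma i^s+O(i^{s-1})$, while $\alpha_0=0=a_0$ and $\alpha_1,\beta_0,\beta_1$ are fixed constants. Now $\rho=\frac{2}{3}\min\{1,r-s+1\}$ satisfies $0<\rho<1$, so $r-1<r-\rho$ and $O(i^{r-1})=o(i^{r-\rho})$; and since $0\le s<r+1$, a short case check (according to whether $s\le r$, where $\rho=\frac{2}{3}$, or $r<s<r+1$, where $\rho=\frac{2}{3}(r-s+1)$) shows $s-1<r-\rho$, so $O(i^{s-1})=o(i^{r-\rho})$. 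Thus $\alpha_i=i^r+o(i^{r-\rho})$, $\beta_i=\gamma i^s+o(i^{r-\rho})$, and the constraint on $\gamma$ is exactly the one imposed in Theorem \ref{thmainzer}; consequently, if the theorem holds for $\{\alpha_i\},\{\beta_i\}$ it gives $\tilde x_{k,k}\le\gamma k^s+k^r\left(2-2^{-4/3}\delta^{2/3}k^{-\rho}\right)+o\left(k^{r-\rho}\right)$ for all sufficiently large $k$.

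Then, subtracting the asymptotics just obtained from the hypotheses $a_i=i^r+o(i^{r-\rho})$, $b_i=\gamma i^s+o(i^{r-\rho})$ of Theorem \ref{thmainzer} yields $a_i-\alpha_i=o(i^{r-\rho})$ and $b_i-\beta_i=o(i^{r-\rho})$. Put $\varepsilon_k=\max_{1\le i\le k-1}|a_i-\alpha_i|$ and $\sigma_k=\max_{0\le i\le k-1}|b_i-\beta_i|$. Because the exponent $r-\rho\ge\frac{1}{3}$ is strictly positive, a sequence which is $o(i^{r-\rho})$ has $\max_{i\le k-1}(\,\cdot\,)=o(k^{r-\rho})$: split the index range at a fixed threshold, so that the finitely many initial terms contribute only a constant (itself $o(k^{r-\rho})$), while beyond the threshold each term is at most an arbitrarily small fixed multiple of $k^{r-\rho}$. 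Hence $\varepsilon_k=o(k^{r-\rho})$ and $\sigma_k=o(k^{r-\rho})$, and (\ref{pert}) gives $|x_{k,k}-\tilde x_{k,k}|<2\varepsilon_k+\sigma_k=o(k^{r-\rho})$. Adding this to the bound for $\tilde x_{k,k}$ and absorbing the extra term into the $o(k^{r-\rho})$ remainder already present in (\ref{eqmain}) gives exactly (\ref{eqmain}) for $x_{k,k}$, which is the assertion of the lemma.

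The argument is entirely soft, so there is no real obstacle; the only care needed is the bookkeeping in the previous paragraph, namely checking that every $o(\,\cdot\,)$ is uniform over $i\le k-1$ and that replacing $i^r,\gamma i^s$ by $(i+m)^r,\gamma(i+m)^s$ stays within the $o(k^{r-\rho})$ budget. What the reduction buys is decisive, however: from now on one may forget the arbitrary (in particular, not necessarily monotonic) coefficients $a_i,b_i$ and establish Theorem \ref{thmainzer} for the single explicit pair $\{\alpha_i\},\{\beta_i\}$, for which the monotonicity and chain-sequence type conditions underlying Theorem \ref{t341} have already been verified in Lemma \ref{seqab}; the bound on $\tilde x_{k,k}$ itself will be extracted below from the Tur\'{a}n inequality $T_4({\bf p}_k)\ge 0$.
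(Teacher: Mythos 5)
Your proposal is correct and follows essentially the same route as the paper: both compare the given coefficients with the test sequences $\alpha_i,\beta_i$, observe that the differences are $o(i^{r-\rho})$ (handling the finitely many initial indices separately), and invoke the perturbation estimate (\ref{pert}) coming from the Rayleigh-quotient representation of the extreme eigenvalues of the Jacobi matrix. Your write-up merely makes explicit two points the paper leaves implicit, namely the uniformity of the $o(\cdot)$ bound over $i\le k-1$ and the verification that $s-1<r-\rho$.
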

\begin{proof}
Let $p_i$ be the family of orthogonal polynomials defined by
   $$a_i =i^r +o(i^{r- \rho}), \; \;
 b_i=c \, i^s +o(i^{r -  \rho})  , $$
 where  $\rho= \frac{2}{3} \, \min\{1,r-s+1 \}, \; \; 0 \le s < r+1, \; \; r\ge 1. $
 Let also ${\tilde p}_{i}$ be the family defined by the sequences $\alpha_i$ and $\beta_i.$
  Denote by $x_{k,k}$ and ${\tilde x}_{k,k}$ be the largest zero of $p_k$ and ${\tilde p}_{k}$
  respectively.
 One readily finds
 $$|a_1- \alpha_1|= O(1),$$
 $$|a_i- \alpha_i|= o(i^{r- \rho}), \; \; i \ge 2,$$
 $$|b_i- \beta_i|= O(i^{s-1})+o(i^{r -  \rho}) = o(i^{r -  \rho}), \; \; i \ge 0.$$
 Hence, for sufficiently large $k$ we have
 $$|x_{k,k}-{\tilde x}_{k,k}| = o(k^{r -  \rho}).$$
\end{proof}
Tur\'{a}n inequalities readily give bounds on the extreme zeros.
Consider the function $t=t_k(x)=p_k/p_{k+1}.$
It consists of $k+2$ branches $\mathcal{B}_0,...,\mathcal{B}_{k+1}$ separated by the zeros $x_{1,k+1}<...<x_{k+1,{k+1}}$ of $p_{k+1}.$
Observe that $\mathcal{B}_0$ changes from $0$ to $-\infty$ on $(-\infty,x_{1,k}),$
whereas $\mathcal{B}_{k+1}$ changes from $\infty$ to $0$ on $(x_{k+1,k+1} , \infty).$
Suppose we are given with a Tur\'{a}n inequality which is valid for a subset $ \mathcal{I} \subset \mathbb{R}.$ Using ({\ref{grec}) we can rewrite it as a quadratic in $t,$
$$
\mathcal{T}(x;t)=K_2(x) t^2- K_1(x) t+K_0 (x) >0.
$$
Let $f(x)$  be a function intersecting
$\mathcal{B}_0$ (or $\mathcal{B}_{k+1}$) at a point $y \in \mathcal{I}.$ Then
$y < x_{1,k}$ (or $y > x_{k+1,k+1}$ ) and $y$ must be among the solutions of inequality
$$\mathcal{T}(x;f)=K_2(x) f^2(x)- K_1 (x) f(x)+K_0 (x) >0. $$
Clearly, the most natural choice for $f$ is $f= K_1 /2 K_2,$ provided it is continuous in a sufficiently large vicinity of the sought zero. The following claim illustrate this approach in the simplest case, where the Tur\'{a}n inequality of Theorem \ref{turgenn} is used.
\begin{theorem}
\label{zerc}
Suppose that $ a_i$ and $b_i$ satisfy (\ref{uslc1}) and (\ref{uslc2}). Then
\begin{equation}
\label{ozkor1}
b_{k-1}- 2\sqrt{\frac{a_k a_{k-1}c_{k-1}}{c_{k}}}<x_{1,k} <x_{k,k} < b_{k-1}+ 2\sqrt{\frac{a_k a_{k-1}c_{k-1}}{c_{k}}} \, ;
\end{equation}
and if $a_i$ and $b_i$ satisfy (\ref{eqsw1}) and (\ref{eqsw2}) then
\begin{equation}
\label{ozkor11}
b_{k-1}-\sqrt{2(a_{k-1}^2+a_k^2)} <x_{1,k} <x_{k,k} < b_{k-1}+\sqrt{2(a_{k-1}^2+a_k^2)} .
\end{equation}
\end{theorem}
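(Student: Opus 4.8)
The plan is to establish the symmetric two--sided estimate (\ref{ozkor1}); the bound (\ref{ozkor11}) then follows immediately, because in the normalization (\ref{eqnormaliz}) one computes $c_{k-1}/c_k=(a_k^2+a_{k-1}^2)/(2a_ka_{k-1})$, hence $a_ka_{k-1}c_{k-1}/c_k=(a_k^2+a_{k-1}^2)/2$, and because by Corollary \ref{colloropt} the hypotheses (\ref{eqsw1})--(\ref{eqsw2}), together with the monotonicity of $a_i$ that they entail, guarantee the \Tu inequality used below. I will argue the upper bound for $x_{k,k}$; the lower bound for $x_{1,k}$ is completely analogous.

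The first step is to put the \Tu inequality into the quadratic form of the recipe preceding the theorem, working at level $k-1$. By Theorem \ref{turgenn}, $T_2(p_{k-1})=p_{k-1}^2-p_{k-2}p_k>0$. Eliminating $p_{k-2}$ from (\ref{grec}) at index $k$, i.e. substituting $a_{k-1}c_{k-1}p_{k-2}=(x-b_{k-1})p_{k-1}-\tfrac{a_k}{c_k}p_k$, and dividing through by $p_k^2$, I obtain that
$$\mathcal{T}(x;t):=a_{k-1}c_{k-1}\,t^2-(x-b_{k-1})\,t+\frac{a_k}{c_k}>0,\qquad t=t_{k-1}(x):=\frac{p_{k-1}(x)}{p_k(x)},$$
at every $x$ with $p_k(x)\neq0$. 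The quadratic $\mathcal{T}(x;\cdot)$ has positive leading coefficient and attains its minimum at the vertex $t=f(x):=(x-b_{k-1})/(2a_{k-1}c_{k-1})$, where its value is $\frac{a_k}{c_k}-\frac{(x-b_{k-1})^2}{4a_{k-1}c_{k-1}}$.

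The second step is the crossing argument. On $(x_{k,k},\infty)$ neither $p_k$ nor, by interlacing, $p_{k-1}$ vanishes, and both have positive leading coefficients, so $t_{k-1}$ is continuous and positive there, with $t_{k-1}(x)\to+\infty$ as $x\to x_{k,k}^{+}$ and $t_{k-1}(x)\to0^{+}$ as $x\to\infty$ (the degrees differ by one). Since $b_{k-1}\le x_{k,k}$ by the remark following Theorem \ref{prop1}, we may assume $b_{k-1}<x_{k,k}$, the case of equality being trivial; then $f(x_{k,k})>0$, while $f$ is strictly increasing and linear, so $t_{k-1}-f$ is positive for $x$ just above $x_{k,k}$ and negative for $x$ large, and the intermediate value theorem yields a point $y>x_{k,k}$ with $t_{k-1}(y)=f(y)$. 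Substituting $t=f(y)$ into $\mathcal{T}(y;t)>0$ gives $\frac{a_k}{c_k}-\frac{(y-b_{k-1})^2}{4a_{k-1}c_{k-1}}>0$, i.e. $(y-b_{k-1})^2<4a_ka_{k-1}c_{k-1}/c_k$; as $y>b_{k-1}$, this forces $x_{k,k}<y<b_{k-1}+2\sqrt{a_ka_{k-1}c_{k-1}/c_k}$. For the lower bound one works instead on the branch of $t_{k-1}$ over $(-\infty,x_{1,k})$, where $t_{k-1}$ is negative, $t_{k-1}(x)\to-\infty$ as $x\to x_{1,k}^{-}$ and $t_{k-1}(x)\to0^{-}$ as $x\to-\infty$, while $f(x)<0$ there with $f(x)\to-\infty$; the same crossing and substitution yield $x_{1,k}>b_{k-1}-2\sqrt{a_ka_{k-1}c_{k-1}/c_k}$.

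The only point needing a little care --- and it is not really an obstacle --- is the existence of the crossing point $y$: one must match the boundary behaviour of the relevant branch of $t_{k-1}$ (blowing up at the extreme zero, tending to $0$ at infinity) against the linear growth of the vertex $f$, and keep track of signs. Everything after that is the routine algebra of evaluating a quadratic at its vertex.
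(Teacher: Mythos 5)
Your proof is correct and follows essentially the same route as the paper: rewrite $T_2>0$ from Theorem \ref{turgenn} as a quadratic in the ratio of consecutive polynomials, take $f$ to be its vertex, and locate a crossing with the extreme branch (you work at level $k-1$ with $t=p_{k-1}/p_k$, the paper at level $k$ with $t=p_k/p_{k+1}$, which is only an index shift). The only difference is that you spell out the intermediate-value crossing argument that the paper dismisses with ``clearly intersects both branches,'' and you verify the normalization computation $a_ka_{k-1}c_{k-1}/c_k=(a_k^2+a_{k-1}^2)/2$ explicitly; both details are correct.
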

\begin{proof}
Theorem \ref{turgenn} gives
$$\mathcal{T}(x,t)=t^2- \frac{x-b_k}{a_k c_k}\, t +\frac{a_{k+1}}{a_k c_k c_{k+1}} >0.$$
Choosing $f(x)= \frac{x-b_k}{2a_k c_k} \, ,$ that clearly intersects both branches $\mathcal{B}_{0}$ and $\mathcal{B}_{k+1},$ yields
$$ 4a_k^2 c_{k}^2 \mathcal{T}(x;f)=-(x-b_k)^2 + \frac{4a_k a_{k+1}c_k}{c_{k+1}} >0,$$
and (\ref{ozkor1}) follows. Choosing now $c_k$ defined by (\ref{eqnormaliz}) and applying Corollary \ref{colloropt} we obtain (\ref{ozkor11}).
\end{proof}

Form now on we assume that the polynomials we are dealing with are orthonormal.

To prove Theorem \ref{thmain} we will apply Tur\'{a}n inequality of Theorem \ref{t341}. First, we need a slightly stronger result than that of Theorem \ref{zerc} under more restrictive conditions. In particular, it requires $\frac{a_{k+1}}{a_k} <\frac{2}{\sqrt{3}} \, .$
\begin{lemma}
\label{vspom}
Suppose that
$$\frac{4a_k^2-3a_{k+1}^2}{2a_{k+1}} \ge b_{k+1}-b_k. $$
Then
$$x_{k,k} < b_{k-2}+2a_{k-2}. $$
\end{lemma}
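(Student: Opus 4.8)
The plan is to use the Gershgorin-type upper bound \eqref{gershmax} for $x_{k,k}$ and then show that under the hypothesis $\frac{4a_k^2-3a_{k+1}^2}{2a_{k+1}} \ge b_{k+1}-b_k$ (together with the standing monotonicity of $a_i$ and $b_i$) the quantity $b_{k-2}+2a_{k-2}$ dominates both competing terms $b_{k-2}+a_{k-2}+a_{k-1}$ and $b_{k-1}+a_{k-1}$. First I would record that, since $a_i$ and $b_i$ are nondecreasing, the Gershgorin bound \eqref{gershmax} applies and gives
$$x_{k,k}\le\max\left(b_{k-2}+a_{k-2}+a_{k-1},\; b_{k-1}+a_{k-1}\right).$$
So it suffices to bound each of the two terms inside the maximum by $b_{k-2}+2a_{k-2}$.

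Comparing the first term with $b_{k-2}+2a_{k-2}$ reduces to showing $a_{k-1}\le 2a_{k-2}$, i.e. $a_{k-1}/a_{k-2}\le 2$; this should follow from the hypothesis, which, when reindexed, forces the ratio $a_{i+1}/a_i$ to be bounded by $2/\sqrt3<2$ (the displayed remark preceding the lemma notes precisely that the condition requires $\frac{a_{k+1}}{a_k}<\frac{2}{\sqrt3}$, and one checks the inequality $4a_k^2-3a_{k+1}^2\ge 2a_{k+1}(b_{k+1}-b_k)\ge 0$ already yields $a_{k+1}^2\le\frac43 a_k^2$). Comparing the second term with $b_{k-2}+2a_{k-2}$ reduces to
$$b_{k-1}-b_{k-2}\le 2a_{k-2}-a_{k-1},$$
and here I would apply the hypothesis with index shifted down by two, namely $\frac{4a_{k-2}^2-3a_{k-1}^2}{2a_{k-1}}\ge b_{k-1}-b_{k-2}$, and then show the purely algebraic inequality $\frac{4a_{k-2}^2-3a_{k-1}^2}{2a_{k-1}}\le 2a_{k-2}-a_{k-1}$. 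Clearing the positive denominator $2a_{k-1}$ turns this into $4a_{k-2}^2-3a_{k-1}^2\le 4a_{k-2}a_{k-1}-2a_{k-1}^2$, i.e. $4a_{k-2}^2-4a_{k-2}a_{k-1}-a_{k-1}^2\le 0$, which as a quadratic in $a_{k-2}/a_{k-1}$ holds precisely when that ratio is at most $\frac{1+\sqrt2}{2}$; since $a_{k-2}<a_{k-1}$ (strict increase) the ratio is less than $1<\frac{1+\sqrt2}{2}$, so the inequality holds with room to spare. Combining the two comparisons gives $x_{k,k}<b_{k-2}+2a_{k-2}$, with strict inequality coming from the strict monotonicity of $a_i$.

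The only mild subtlety—and the place where I would be most careful—is bookkeeping the index shifts: the lemma's hypothesis is stated at level $k$, but both terms in the Gershgorin maximum live at levels $k-1$ and $k-2$, so one must either assume the hypothesis holds at all the relevant levels (which is the intended reading, consistent with how Lemma~\ref{vspom} is invoked later) or verify that monotonicity of the ratios propagates the level-$k$ hypothesis downward. I expect the rest to be routine one-line algebraic estimates; the quadratic-in-ratio manipulations above are the substantive content, and they are elementary.
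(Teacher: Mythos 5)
Your approach cannot work, and the failure is already visible in your first comparison. You need $b_{k-2}+a_{k-2}+a_{k-1}\le b_{k-2}+2a_{k-2}$, which after cancelling $b_{k-2}$ and one copy of $a_{k-2}$ reduces to $a_{k-1}\le a_{k-2}$ --- not to $a_{k-1}\le 2a_{k-2}$ as you wrote (you dropped the $a_{k-2}$ already present in the Gershgorin radius). Since the standing assumption throughout this part of the paper is that $a_i$ is increasing, $a_{k-1}>a_{k-2}$, so the row-$(k-2)$ Gershgorin disk reaches out to $b_{k-2}+a_{k-2}+a_{k-1}$, which is \emph{strictly larger} than the target $b_{k-2}+2a_{k-2}$. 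In other words, the lemma is precisely a sharpening of the Gershgorin estimate (\ref{gershmax}); the paper says as much just after (\ref{gershmax}) (``To obtain a better bound requires much more efforts'') and again just before the lemma (``we need a slightly stronger result than that of Theorem \ref{zerc}''). No rearrangement of the Gershgorin maximum can yield a bound below the largest disk boundary, so the gap is not a bookkeeping issue but a wrong starting point. (Your second comparison, $b_{k-1}+a_{k-1}\le b_{k-2}+2a_{k-2}$ via the quadratic $4u^2-4u-1\le 0$ in $u=a_{k-2}/a_{k-1}$, is fine modulo the index shift, but it does not save the argument.)

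The paper's actual proof is of a different nature: it takes the Tur\'an inequality $S_2(\mathbf{p}_k)>0$ valid for $x\ge b_k-2a_k$ (Theorem \ref{turantwo}), rewrites it via the recurrence as a quadratic $L_2\tau^2-L_1\tau+L_0$ in $\tau=\mathbf{p}_{k+1}/\mathbf{p}_{k+2}$, and runs the branch-intersection argument with $f=L_1/2L_2$: the value $L_2f^2-L_1f+L_0$ factors as a positive multiple of $(x-b_k+2a_k)(b_k+2a_k-x)$, which forces the intersection of $f$ with the last branch of $\mathbf{p}_{k+1}/\mathbf{p}_{k+2}$ --- and hence $x_{k+2,k+2}$ --- to lie below $b_k+2a_k$; reindexing gives the claim. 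The hypothesis $\frac{4a_k^2-3a_{k+1}^2}{2a_{k+1}}\ge b_{k+1}-b_k$ is used there to control the resultant of $L_1$ and $L_2$ and the sign of $L_2$ on the relevant interval, not to compare Gershgorin radii. If you want to prove the lemma, you need an input of at least the strength of a Tur\'an-type inequality (or a Rayleigh-quotient argument as in (\ref{levshM})); a row-sum eigenvalue bound is structurally too weak.
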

\begin{proof}
Theorem \ref{turantwo} states that $S_2({\bf p}_k) >0$ for $x \ge b_k-2a_k.$
Expressing ${\bf p}_{k-1}$ and ${\bf p}_k$ through ${\bf p}_{k+1}$ and ${\bf p}_{k+2},$
and setting $\tau={\bf p}_{k+1}/{\bf p}_{k+2}$ we obtain
$$ a_k^2 a_{k+1}^2 {\bf p}_{k+2}^{-2} S_2({\bf p}_k)= L_2 \tau^2-L_1 \tau+L_0,$$
where
$$L_2=
a_{k+1}^4-a_{k+1}^2 b_k b_{k+1}+a_k^2 b_{k+1}^2+(a_{k+1}^2 b_k-2a_k^2 b_{k+1}+a_{k+1}^2 b_{k+1})x -(a_{k+1}^2 -a_k^2)x^2 ,$$
$$L_1 =a_{k+2}\left(a_{k+1}^2 b_k-2a_k^2 b_{k+1}-(a_{k+1}^2-2a_k^2)x \right), $$
$$L_0 =a_k^2 a_{k+2}^2.$$
Choose $f=L_1/2L_2.$ Notice that $L_1$ and $L_2$ have no common factors as their resultant in $x$ is
$$a_{k+1}^4 a_{k+2}^2 \left((2a_k^2-a_{k+1}^2)^2-a_k^2(b_{k+1}-b_k)^2 \right) \ge $$
$$
a_{k+1}^4 a_{k+2}^2 \left((2a_k^2-a_{k+1}^2)^2-\frac{(4a_k^2-3a_{k+1}^2)^2}{4} \right) =$$
$$\frac{a_{k+1}^6 a_{k+2}^2}{4}\, (8a_k^2-5 a_{k+1}^2) >\frac{a_{k+1}^6 a_{k+2}^2}{2}\, (4a_k^2-3 a_{k+1}^2) \ge 0.$$
We find
$$L_2 f^2-L_1 f+L_0 = \frac{a_{k+1}^4 a_{k+2}^2}{4 L_2} \,
(x-b_k+2 a_k)(b_k+2a_k-x).$$
Since $b_{k+1}<x_{k+2,k+2} <b_{k+1}+2a_{k+1}$ by Lemmas \ref{center} and \ref{zerc}, it is left to check that $L_2(x)$ does not vanish in the interval $[b_{k+1}, b_{k+1}+2a_{k+1})$ and that $f$ intersects $\mathcal{B}_{k+2}.$
Indeed,
$$L_2(b_{k+1})= a_{k+1}^4 >0,$$
$$a_{k+1}^{-2} L_2(b_{k+1}+2a_{k+1})=4a_k^2-3a_{k+1}^2-2a_{k+1}( b_{k+1}-b_k)^2 \ge 0, $$
by the assumption.
On the other hand
$$L_1( b_{k+1})=-a_{k+1}^2 a_{k+2} (b_{k+1}-b_k) <0,$$
$$L_1( 3b_{k+1}-2b_k)=2a_{k+2}^2 (4 a_{k}^2-3 a_{k+1}^2)(b_{k+1}-b_k) >0,$$
where
$$3b_{k+1}-2b_k \le b_{k+1}+ \frac{4a_k^2-3a_{k+1}^2}{a_{k+1}}<b_{k+1}+a_k. $$
Thus, $L_1 (x) >0$ for $x \ge b_{k+1}+a_k$ and therefore $f(x) >0$ for
$b_{k+1}+a_k \le x < \xi,$ where $\xi$ is the largest zero of $L_2.$
Hence $f$ intersects $\mathcal{B}_{k+2}$ as $f \rightarrow \infty ,$
that is when $x$ approaches $\xi.$
\end{proof}

The explicit form of
$$\mathcal{T}(x,t)=a_{k-1} a_k a_{k+2} {\bf p}_{k+1}^{-2}T_4 ({\bf p}_k)$$ is
$$
\mathcal{T}(x,t)=K_2 t^2+K_1 t+ K_0,
$$
where
\begin{equation}
\label{kkk}
K_2= a_k^2 a_{k+1}+3a_{k-1} a_k a_{k+2}-a_{k+1}  b_{k-1} b_k+a_{k+1}(b_k+b_{k-1})x-a_{k+1} x^2,
\end{equation}
$$
K_1=4a_{k-1}a_{k+2}b_k-a_{k+1}^2 b_{k-1}+a_k^2 b_{k+1}-b_{k-1}b_k b_{k+1}+$$
$$(a_{k+1}^2-a_k^2-4a_{k-1}a_{k+2}+b_{k-1}b_k +b_{k-1}b_{k+1}+b_k b_{k+1})x-
$$
$$(b_{k-1}+b_k+b_{k+1})x^2+x^3, $$
$$
K_0=-a_{k+1}(b_{k-1}b_{k+1}-4a_{k-1}a_{k+2}-(b_{k+1}+ b_{k-1})x+x^2).
$$
In what follows we choose
\begin{equation}
\label{definf}
f=f(x)=-K_1/2K_2 .
\end{equation}
The following two lemmas show that it is continuous and intersects the required branch.
\begin{lemma}
\label{leminterval}
Suppose that a sequence $a_i$ satisfies (\ref{usl1}) and (\ref{usl2}). Then the interval $[b_{k-1},b_{k-1}+2a_{k-1}]$ lies between the zeros of $K_2,$ and
the function $f(x)$ is continuous on it.
\end{lemma}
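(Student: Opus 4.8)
The plan is to base everything on a clean factorisation of the quadratic $K_2$ of (\ref{kkk}). Grouping the $x$-dependent terms one sees at once that
\[
K_2 = -a_{k+1}(x-b_{k-1})(x-b_k) + a_k^2 a_{k+1} + 3a_{k-1}a_k a_{k+2},
\]
equivalently
\[
\frac{K_2}{a_{k+1}} = a_k^2 + \frac{3a_{k-1}a_k a_{k+2}}{a_{k+1}} - (x-b_{k-1})(x-b_k).
\]
As a polynomial in $x$, $K_2$ is a downward-opening parabola (leading coefficient $-a_{k+1}<0$), so as soon as $K_2$ is positive at one point it has two distinct real zeros and is positive exactly on the open interval between them. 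Consequently both assertions of the lemma follow once we show $K_2>0$ on the closed interval $[b_{k-1},b_{k-1}+2a_{k-1}]$: the interval then lies (strictly) between the zeros of $K_2$, and $f=-K_1/2K_2$, being a ratio of polynomials with a nonvanishing denominator, is continuous there.

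So the task reduces to bounding the product $(x-b_{k-1})(x-b_k)$ from above on the interval. I would write $x=b_{k-1}+u$ with $0\le u\le 2a_{k-1}$ and set $\delta:=b_k-b_{k-1}\ge 0$ (here one uses that $\{b_i\}$ is nondecreasing — a standing assumption). Then $(x-b_{k-1})(x-b_k)=u(u-\delta)=u^2-\delta u$; this is an upward parabola in $u$ and hence on $[0,2a_{k-1}]$ attains its maximum at an endpoint, giving $u(u-\delta)\le\max\{0,\,4a_{k-1}^2-2\delta a_{k-1}\}\le 4a_{k-1}^2$ in all cases (the first case $\le 0$ occurring when $b_k$ lies beyond the interval).

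Feeding this into the formula for $K_2/a_{k+1}$, the lemma is reduced to the pointwise inequality
\[
a_k^2 + \frac{3a_{k-1}a_k a_{k+2}}{a_{k+1}} > 4a_{k-1}^2 .
\]
Here I would invoke that $\{a_i\}$ is strictly increasing — for the sequences $\alpha_i$ this is forced by the convexity hypothesis (\ref{usl2}) together with $\alpha_1>0$, and it is in any case among the standing hypotheses — so that $a_{k+2}/a_{k+1}>1$ and $a_k>a_{k-1}$, whence the left-hand side exceeds $a_k^2+3a_{k-1}a_k = (a_k-a_{k-1})(a_k+4a_{k-1})+4a_{k-1}^2 > 4a_{k-1}^2$. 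This yields $K_2>0$ on $[b_{k-1},b_{k-1}+2a_{k-1}]$ and finishes the proof. There is no genuine obstacle: the only points requiring a little care are the case distinction according to whether $b_k$ falls inside or beyond the interval when bounding $(x-b_{k-1})(x-b_k)$, and observing that only the monotonicity of $\{a_i\}$ and $\{b_i\}$ — not any growth rate — is used, so that conditions (\ref{usl1})–(\ref{usl2}) enter solely to guarantee that monotonicity for the test sequences.
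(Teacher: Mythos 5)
Your proof is correct, and it reaches the conclusion by a cleaner route than the paper. Both arguments come down to showing $K_2>0$ on $[b_{k-1},b_{k-1}+2a_{k-1}]$ (whence, $K_2$ being a concave quadratic in $x$, the interval sits between its two real zeros and $f=-K_1/2K_2$ is continuous there). The paper does this by evaluating $K_2$ at the two endpoints only, relying on concavity for the interior, and then establishes positivity of $K_2(b_{k-1}+2a_{k-1})$ by a two-step substitution using the convexity condition (\ref{usl2}) twice (first $a_{k+2}\ge 2a_{k+1}-a_k$, then $a_{k+1}\ge 2a_k-a_{k-1}$ after checking a derivative sign), ending at the inequality $2(a_k-a_{k-1})(a_k^2+5a_ka_{k-1}-2a_{k-1}^2)>0$. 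You instead isolate the $x$-dependence via the factorization
$$K_2=-a_{k+1}(x-b_{k-1})(x-b_k)+a_k^2a_{k+1}+3a_{k-1}a_ka_{k+2},$$
bound $(x-b_{k-1})(x-b_k)\le 4a_{k-1}^2$ uniformly on the interval (using only that $b_k\ge b_{k-1}$), and reduce everything to $a_k^2+3a_{k-1}a_k>4a_{k-1}^2$, which needs only strict monotonicity of $a_i$. This is a genuine simplification: it shows that (\ref{usl1}) plays no role and that (\ref{usl2}) enters only insofar as it forces $a_i$ to be strictly increasing (which it does, given $a_0=0<a_1$), so the lemma holds under weaker hypotheses than stated. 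The one point worth making explicit, which you do handle, is the case split according to whether $b_k$ lies inside the interval when maximizing $u(u-\delta)$; your endpoint argument for the upward parabola in $u$ covers it. Your proof is a valid, and in fact preferable, replacement for the paper's.
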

\begin{proof}
One calculates
$$K_2 (b_{k-1})=a_k (a_k a_{k+1}+3a_{k-1}a_{k+2} )>0,$$
$$ K_2 (b_{k-1}+2a_{k-1})=a_k^2 a_{k+1} +3a_{k-1}a_k a_{k+2}-4a_{k-1}^2 a_{k+1}   +2a_{k-1}a_{k+1}(b_k-b_{k-1}),$$
To show that $ K_2 (b_{k-1}+2a_{k-1}) >0$ we first apply $a_{k+2} \ge 2a_{k+1}-a_k,$
yielding
$$a_k^2 a_{k+1} +3a_{k-1}a_k a_{k+2}-4a_{k-1}^2 a_{k+1} >a_k^2 a_{k+1}+6
a_{k-1}a_{k}a_{k+1}-4a_{k-1}^2a_{k+1}-3  a_{k-1}a_k^2.$$
The derivative of the last expression in $a_{k+1}$ is positive. Replacing $a_{k+1}$ by $2a_k-a_{k-1}$ we get that it is not less than
$$2(a_k-a_{k-1})(a_k^2+5a_k a_{k-1}-2a_{k-1}^2) >0.$$
\end{proof}
Now we will consider the sequences $\alpha_i$ and $\beta_i$ defined by (\ref{polseqa}) and (\ref{polseqb}) respectively. We also set as above $n=n(i)=m+i$ and assume that
$n$ is large enough to justify all approximations below.

First we notice that since $\beta_{i+1}-\beta_i=o(i^r)$ the assumption of Lemma \ref{vspom} is fulfilled for sufficiently large $k$ and therefore
\begin{lemma}
\label{vspomab}
For the sequences $\alpha_i$ and  $\beta_i$  and sufficiently large $k $
$$x_{k,k} < \beta_{k-2}+2\alpha_{k-2}, $$
\end{lemma}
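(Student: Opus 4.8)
The plan is to verify that the test sequences $a_i=\alpha_i$, $b_i=\beta_i$ of \eqref{polseqa}--\eqref{polseqb} satisfy, for all sufficiently large $k$, the single hypothesis
$$\frac{4\alpha_k^2-3\alpha_{k+1}^2}{2\alpha_{k+1}}\ge\beta_{k+1}-\beta_k$$
of Lemma \ref{vspom}. Once this is in hand the conclusion is immediate: with $a_i=\alpha_i$, $b_i=\beta_i$ that lemma gives precisely $x_{k,k}<\beta_{k-2}+2\alpha_{k-2}$. So the whole argument reduces to an asymptotic comparison of the two sides of the displayed inequality.

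First I would bound the right-hand side. For large $k$, $\beta_{k+1}-\beta_k=\gamma\bigl((k+1+m)^s-(k+m)^s\bigr)$, which by the mean value theorem equals $\gamma s\,\theta^{\,s-1}$ for some $\theta\in(k+m,k+m+1)$, hence is $O(k^{s-1})$ (the case $s=0$ being trivial). Since by hypothesis $s<r+1$, i.e. $s-1<r$, this gives $\beta_{k+1}-\beta_k=o(k^r)$.

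Next I would estimate the left-hand side. Put $n=m+k$, so $n\ge k$ by \eqref{defm}; for $k\ge 2$ one has $\alpha_k=n^r$, $\alpha_{k+1}=(n+1)^r$, and $\alpha_{k+1}/\alpha_k=(1+1/n)^r\to 1$ as $k\to\infty$. In particular this ratio eventually drops below $2/\sqrt3$, so $4\alpha_k^2-3\alpha_{k+1}^2=\alpha_k^2\bigl(4-3(1+1/n)^{2r}\bigr)>0$ for large $k$, and in fact $4\alpha_k^2-3\alpha_{k+1}^2\ge\tfrac12\alpha_k^2$ eventually. Combining this with $\alpha_{k+1}/\alpha_k\le 2$ (which holds for the relevant indices by \eqref{usl3}) yields, for $k$ large,
$$\frac{4\alpha_k^2-3\alpha_{k+1}^2}{2\alpha_{k+1}}\ge\frac{\alpha_k^2}{4\alpha_{k+1}}=\frac{n^{2r}}{4(n+1)^r}\ge\frac{n^r}{8},$$
so the left-hand side grows at least like a positive constant times $k^r$.

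Comparing the two bounds, the left-hand side is bounded below by $c\,k^r$ for some $c>0$ while the right-hand side is $o(k^r)$; hence the hypothesis of Lemma \ref{vspom} holds for all sufficiently large $k$, and that lemma delivers $x_{k,k}<\beta_{k-2}+2\alpha_{k-2}$, as claimed. There is no real obstacle here — the only point requiring a little care is checking that $4\alpha_k^2-3\alpha_{k+1}^2$ is eventually positive, which is exactly the statement that $(1+1/n)^r\to 1$ pushes $\alpha_{k+1}/\alpha_k$ below the threshold $2/\sqrt3$ for $k$ large.
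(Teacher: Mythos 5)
Your proposal is correct and is essentially the paper's own argument: the paper likewise proves this by observing that $\beta_{i+1}-\beta_i=o(i^r)$ while the left-hand side of the hypothesis of Lemma \ref{vspom} grows like a constant times $k^r$, so that hypothesis holds for large $k$ and Lemma \ref{vspom} gives the conclusion. You have merely filled in the routine asymptotic estimates that the paper leaves implicit.
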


\begin{lemma}
For sufficiently large $k$ the function $f(x)$ intersects $\mathcal{B}_{k+1}$ on \\ $(\beta_{k-1},x_{k+1,k+1}],$ provided $\gamma$ satisfies (\ref{gambound}).
\end{lemma}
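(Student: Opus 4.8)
The plan is to pin down the crossing by a sign-and-continuity analysis of $t_k={\bf p}_k/{\bf p}_{k+1}$ against $f=-K_1/2K_2$ on the branch $\mathcal B$ of $t_k$ abutting the largest pole $x_{k+1,k+1}$ from the left, namely $\mathcal B=(x_{k,k+1},x_{k+1,k+1})$; this is the branch that carries $x_{k,k}$, since by interlacing $x_{k,k+1}<x_{k,k}<x_{k+1,k+1}$ and $t_k$ vanishes at $x_{k,k}$. First I would record the endpoint behaviour of $t_k$ on $\mathcal B$. Inspecting signs via the interlacing of the zeros of ${\bf p}_k$ and ${\bf p}_{k+1}$, one has ${\bf p}_{k+1}\to0^-$ at both ends, while ${\bf p}_k(x_{k,k+1})<0$ (as $x_{k,k+1}$ lies between the two largest zeros of ${\bf p}_k$) and ${\bf p}_k(x_{k+1,k+1})>0$ (as $x_{k+1,k+1}>x_{k,k}$). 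Hence $t_k\to+\infty$ as $x\to x_{k,k+1}^{+}$ and $t_k\to-\infty$ as $x\to x_{k+1,k+1}^{-}$.

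Next I would secure that $f$ stays finite up to $x_{k+1,k+1}$. By Lemma \ref{leminterval} the zeros $\zeta_-<\zeta_+$ of $K_2$ straddle $[\beta_{k-1},\beta_{k-1}+2\alpha_{k-1}]$, so $K_2>0$ (and $f$ continuous) there; Lemma \ref{vspomab} with index $k+1$ gives $x_{k+1,k+1}<\beta_{k-1}+2\alpha_{k-1}<\zeta_+$, while $\zeta_-<\beta_{k-1}$ lies below every zero of ${\bf p}_{k+1}$, so $x_{k,k+1}>\zeta_-$. Thus, setting $\ell:=\max\{\beta_{k-1},x_{k,k+1}\}$, the interval $(\ell,x_{k+1,k+1})$ sits inside $(\zeta_-,\zeta_+)$ and contains no pole of $t_k$, so $f$ is continuous on it.

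The heart of the matter is the left endpoint, and here the key is a clean evaluation of (\ref{kkk}) at $x=\beta_{k-1}=b_{k-1}$: the purely-$b$ cubic terms cancel and one is left with
$$K_1(\beta_{k-1})=4\alpha_{k-1}\alpha_{k+2}(\beta_k-\beta_{k-1})+\alpha_k^{2}(\beta_{k+1}-\beta_{k-1})\ge 0,$$
strictly positive because $\beta_i$ is increasing; since $K_2(\beta_{k-1})>0$, this yields $f(\beta_{k-1})\le0$. If $x_{k,k+1}\ge\beta_{k-1}$ then $\mathcal B\subset(\beta_{k-1},x_{k+1,k+1})$ and the passage of $t_k-f$ from $+\infty$ to $-\infty$ across $\mathcal B$ already produces a zero $y\in(\beta_{k-1},x_{k+1,k+1}]$. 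If instead $x_{k,k+1}<\beta_{k-1}$, then $\beta_{k-1}$ itself lies on $\mathcal B$; since $\beta_{k-1}<x_{k,k}$ by Lemma \ref{center}, we get $t_k(\beta_{k-1})>0\ge f(\beta_{k-1})$, so $t_k-f>0$ at $\beta_{k-1}$, while $t_k-f\to-\infty$ at $x_{k+1,k+1}^{-}$, and continuity on the pole-free stretch $(\beta_{k-1},x_{k+1,k+1})$ lets the intermediate value theorem deliver a crossing $y\in(\beta_{k-1},x_{k+1,k+1}]$.

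The main obstacle is exactly the control of this left endpoint: one must ensure $f$ does not blow up before $x_{k+1,k+1}$ (this is what forces the use of $x_{k+1,k+1}<\beta_{k-1}+2\alpha_{k-1}<\zeta_+$ from Lemmas \ref{vspomab} and \ref{leminterval}) and that $f(\beta_{k-1})\le0$, i.e. $K_1(\beta_{k-1})\ge0$. It is the latter inequality that guarantees the crossing occurs to the right of $\beta_{k-1}$, rather than being lost on one of the earlier branches of $t_k$; all the remaining steps are the routine interlacing and sign bookkeeping sketched above.
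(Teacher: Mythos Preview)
Your argument is internally coherent for producing a crossing of $f$ with $t_k$ on the branch $(x_{k,k+1},x_{k+1,k+1})$, but this is the wrong branch. In the paper's notation (Section~\ref{sec5}) the $k+2$ branches $\mathcal{B}_0,\ldots,\mathcal{B}_{k+1}$ of $t_k={\bf p}_k/{\bf p}_{k+1}$ are separated by the poles $x_{1,k+1}<\cdots<x_{k+1,k+1}$, and $\mathcal{B}_{k+1}$ is explicitly the \emph{rightmost} branch, supported on $(x_{k+1,k+1},\infty)$, where $t_k$ decreases from $+\infty$ to $0$. What you call $\mathcal{B}$ is $\mathcal{B}_k$. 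The interval $(\beta_{k-1},x_{k+1,k+1}]$ in the lemma's phrasing is admittedly confusing, but the paper's proof and the way the lemma is used immediately afterwards leave no doubt which branch is meant.

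This is not a cosmetic matter: it breaks the application. The whole purpose of intersecting $\mathcal{B}_{k+1}$ is to produce a point $y>x_{k+1,k+1}$ with $f(y)=t_k(y)$, so that the Tur\'an inequality of Theorem~\ref{t341} forces $\mathcal{T}(y,f(y))\ge 0$, i.e.\ $G(y)=4K_0K_2-K_1^2\ge 0$; the largest root of $G$ then dominates $y$ and hence $x_{k+1,k+1}$. Your crossing lies strictly below $x_{k+1,k+1}$ and yields no such bound. The paper's route is quite different from yours: it first shows that the largest zero $\xi$ of $K_2$ satisfies $\xi>x_{k+1,k+1}$ (combining Lemmas~\ref{leminterval} and~\ref{vspomab}), and then verifies by an asymptotic computation that $K_1(\xi)<0$, so that $f=-K_1/2K_2\to+\infty$ as $x\to\xi^{-}$; since $t_k$ is bounded there while $t_k\to+\infty$ at $x_{k+1,k+1}^{+}$, the intermediate value theorem on $(x_{k+1,k+1},\xi)$ forces the crossing on $\mathcal{B}_{k+1}$. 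Thus the crucial sign control is on $K_1$ at $x=\xi$, not at $x=\beta_{k-1}$; your evaluation $K_1(\beta_{k-1})\ge 0$, though correct, is not what is needed. (A minor aside: your assertion that $\zeta_-$ lies below \emph{every} zero of ${\bf p}_{k+1}$ is unjustified and in general false, though you do not actually use it.)
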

\begin{proof}
By Lemmas \ref{leminterval} and \ref{vspomab} we have
$x_{k+1,k+1}<\xi,$
where is the largest zero $\xi$ of $K_2$
Notice that $K_1(x)$ has 3 real zeros $\eta_1 \le \beta_k <\eta_2 < \eta_3.$
Indeed,
$$K_1( -\infty)=-\infty,\; \; K_1( \infty)=\infty ,$$
$$K_1 (\beta_k)=\alpha_{k+1}^2(\beta_k-\beta_{k-1})+\alpha_k^2(\beta_{k+1}-\beta_k)\ge 0, $$
$$K_1 (\beta_k+\alpha_k)=\alpha_k (\alpha_{k+1}^2-4\alpha_{k-1}\alpha_{k+2})+(\beta_k-\beta_{k-1})
\left(\alpha_{k+1}^2+\alpha_k^2-\alpha_k(\beta_{k+1}-\beta_k) \right)=$$
$$-3n^{3r} +O\left(n^{\max\{3r-1,2s-2 \}}\right)<0. $$
Now we check that
$$\xi=\frac{\beta_{k-1}+\beta_k+ \tau}{2} \, ,$$
where
$$\tau=\sqrt{\frac{4\alpha_k}{\alpha_{k+1}} (\alpha_k \alpha_{k+1}+3\alpha_{k-1}\alpha_{k+2})+(\beta_k-\beta_{k-1})^2} \, ,$$
is less than the largest zero of $K_1.$
Then $\lim _{x \rightarrow \xi^{(-)}} =\infty $ and $f$ intersects $\mathcal{B}_{k+1}$ before $\xi.$
Indeed, we calculate
$$2 \alpha_{k+1} K_1(\xi)=(\alpha_{k+1}^3+3\alpha_{k-1}\alpha_k \alpha_{k+2}-4\alpha_{k-1}\alpha_{k+1}\alpha_{k+2}) \gamma+$$
$$
+\alpha_{k+1}(\alpha_{k+1}^2+4\alpha_{k-1}\alpha_{k+2})(\beta_k-\beta_{k-1})-
3\alpha_{k-1}\alpha_k \alpha_{k+2}(2 \beta_{k+1}-\beta_k-\beta_{k-1})= $$
$$
\left(-2\gamma r n^{3r-1} - 4c s n^{3r+s-1}\right)\left(1 +O(n^{-1}) \right) <0.
$$
It is left to verify that $\xi \le \beta_k+2\alpha_k.$
Using (\ref{usl2}) one finds
$$\alpha_k^{-1} K_2 (\beta_k+2\alpha_k)= 3\alpha_{k-1}\alpha_{k+2}-3\alpha_k \alpha_{k+1}-2\alpha_{k+1}(\beta_k-\beta_{k-1})\le $$
$$3\alpha_{k-1}\alpha_{k+1} \left(\frac{\alpha_{k+2}}{\alpha_{k+1}}-\frac{\alpha_{k}}{\alpha_{k-1}} \right) <0. $$
This completes the proof.
\end{proof}
Thus intersection of $f$ and $\mathcal{B}_{k+1}$ occurs at a point belonging to the set\\
$\{x:G(x)=4K_0 K_2-K_1^2 >0\}.$ Here $G(x)$ is a rather complicated polynomial of degree 6,
\begin{equation}
\label{eqg}
G(x)=-x^6+2( \beta_{k+1}+\beta_k+\beta_{k-1})x^5 +...
\end{equation}
\begin{lemma}
For sufficiently large $n$ the equation $G(x)=0$ has precisely two real zeros in the region $ 0 \le s< r+1.$
\end{lemma}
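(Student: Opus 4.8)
The plan is to analyze the sextic $G(x) = 4K_0K_2 - K_1^2$ near the relevant endpoint of the spectrum, using the test sequences $\alpha_i,\beta_i$ and the substitution $n = m+i$ so that everything becomes an explicit (if lengthy) function of $n$, $r$, $s$, and $\gamma$, with error terms controlled by powers of $n^{-1}$. Since we already know from the earlier lemmas that the intersection point of $f$ with $\mathcal{B}_{k+1}$ lies in $(\beta_{k-1}, x_{k+1,k+1}]$ and that this forces $G(x) > 0$ at that point, and since $G$ has degree $6$ with negative leading coefficient (as displayed in \eqref{eqg}), it suffices to count sign changes of $G$ along the real line and localize its real zeros. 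First I would observe that for $x \to \pm\infty$ we have $G(x) \to -\infty$, so the number of real zeros (with multiplicity) is even, i.e. $0$, $2$, $4$, or $6$; the claim is that it is exactly $2$, and moreover both lie in the window where the sextic is positive, straddling the true largest zero $x_{k,k}$.

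The key computation is to evaluate $G$ and enough of its derivatives at a few carefully chosen test points expressed in terms of $\beta_k$ and $\alpha_k$ — the natural candidates being $\beta_{k-1}$, $\beta_k$, $\beta_k + 2\alpha_k$ (the Gershgorin-type upper bound), and points of the form $\beta_k + c\,\alpha_k$ for suitable constants $c$ — and to extract the leading asymptotic term in $n$ of each value. Using the structure of $\alpha_i = (i+m)^r$, $\beta_i = \gamma(i+m)^s$ together with the Bernoulli-type inequalities \eqref{ineqbern}, \eqref{usualbern} and the monotonicity facts \eqref{usl1}--\eqref{usl3}, each such evaluation reduces to a polynomial in $n^{r}$, $n^{s}$ with a dominant monomial whose coefficient is a fixed function of $r,s,\gamma$ that one checks has a definite sign. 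Because the full spectrum of $p_k$ shrinks to an $O(n^{r-\rho})$-neighborhood of $\beta_k + 2\alpha_k$, I expect $G$ to be negative well to the left of the spectrum and well to the right of $\beta_k + 2\alpha_k$, and positive on a short interval in between; the two sign changes bounding that interval are the two real zeros, and Descartes/Rolle-type arguments on the remaining factor (degree $4$ after dividing out the two located roots, with the right end behavior) rule out any further real roots.

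The main obstacle is controlling the error terms uniformly: $G$ is a degree-$6$ polynomial whose coefficients are themselves polynomial expressions in the $\alpha$'s and $\beta$'s at five consecutive indices, so after the substitution $n = m+i$ one is juggling cancellations among terms of size up to $n^{3r}$ (and $n^{3r+s-1}$, $n^{2s-2}$, etc., depending on whether $s < r$, $s = r$, or $r < s < r+1$), and one must verify that the subleading contributions genuinely do not change the sign pattern for all $n$ past some threshold. This is where the three-case split on the range of $s$ (already visible in the statement of Theorem \ref{thmainzer} through $\rho$ and the bounds on $\delta$) will reappear, since the identity of the dominant monomial in $G$ at the crucial test point changes across $s = r$. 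I would handle this by organizing the computation so that in each regime the leading term is isolated first, its sign verified by an elementary inequality (e.g. the same Jensen-type estimate $2 < (2(3/2)^{1/r}-1)^r$ used in Lemma \ref{power}), and the remainder bounded crudely by $O(n^{-1})$ times that leading term — exactly the pattern already employed in the preceding lemmas of this section — so that "sufficiently large $n$" makes the sign rigorous. The conclusion is that $G$ has precisely two real zeros, which pinch the interval containing $x_{k,k}$ and thereby yield the bound \eqref{eqmain}.
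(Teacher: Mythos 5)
Your plan establishes the easy half of the lemma but not the hard half. Evaluating $G$ at finitely many test points and finding the sign pattern negative/positive/negative proves that $G$ has \emph{at least} two real zeros; it can never by itself prove that it has \emph{at most} two. The step you offer for the upper bound --- ``Descartes/Rolle-type arguments on the remaining factor (degree $4$ after dividing out the two located roots)'' --- is not executable as described: the two roots are only localized by sign changes, not known explicitly, so the quartic cofactor is not available, and you have no control over the signs of its coefficients (Descartes) or over the sign of $G'$ between your test points (Rolle). A priori $G$ could have four or six real roots, with an extra pair hiding inside one of the intervals where you have only checked endpoint signs, and nothing in the sketch excludes this. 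This is precisely the part of the lemma that requires an algebraic certificate rather than point evaluations.

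The paper's proof supplies that certificate by a quite different mechanism: it computes the discriminant of $G$ in $x$ (via Mathematica) and shows, by isolating the dominant monomial $h_i\,n^{2i(s-r)}$ in each of the regimes $s<r$, $s=r$ (with the subcases $\gamma\ne 2$, $\gamma=2$), and $r<s<r+1$, that the discriminant is nonzero for all parameters in the region once $n$ is large. Since the number of real roots of a real polynomial of fixed degree can change only where the discriminant vanishes, the root count is constant on the connected parameter region, and it is then evaluated at the single instance $r=s=1$, $\gamma=2$, $n=2$, where the explicit sextic is checked to have exactly two real zeros. If you want to salvage your approach you would need to replace the ``divide out and apply Descartes'' step by something of equal strength --- a Sturm-sequence computation, a full sign analysis of $G'$ and $G''$, or the discriminant argument itself --- and that is where essentially all of the work in the paper's proof lives.
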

\begin{proof}
For the discriminant $Dis_x G$ Mathematica gives
$$Res_x G =-2^{16}(1-n^{-2})^{3r}(1+2n^{-1})^{3r} n^{16r-8} V(n),$$
where
$$V(n) = n^{14} \sum_{i=0}^7  \gamma^{2i} s^{2i} h_i \, n^{2i(s-r)},$$
$$h_0=64 \cdot 6^6  r^8 \left( 1+O(n^{-1}) \right), $$
$$h_1=-64 \cdot 6^6   r^6 \left( 1+O(n^{-1}) \right),$$
$$h_2=4 \cdot 6^7   r^4  \left( 1+O(n^{-1}) \right),$$
$$h_3=-4 \cdot 6^6   r^2  \left( 1 +O(n^{-1}) \right),$$
$$h_4=9 \cdot 6^4     \left( 1+O(n^{-1}) \right),$$
$$h_5=3915  n^{-2}  \left( 1+O(n^{-1}) \right),$$
$$h_6=388  n^{-4}  \left( 1+O(n^{-1}) \right),$$
$$h_7=12  n^{-6}  \left( 1+O(n^{-1}) \right).$$
Thus, for $s<r$ the sign of $V(n)$ is determined by the sign of $h_0,$
whereas for $r<s <r+1$ by the sign of $h_4.$
Hence, $V(n)>0$ and does not change the sign for $s \ne r.$
For $r=s$ one finds
$$ V(n)=4 \cdot 6^6 (4-\gamma^2)^4  r^8 n^{14r}\left( 1+O(n^{-1}) \right) >0,$$
provided $\gamma \ne 2.$
Finally, for $r=s \ne 1,$ and $\gamma=2$ calculations give
$$ V(n)=16 \cdot 12^6  r^8 n^{14r-4}\left( 1+O(n^{-1} \right) >0,$$
and for $r=s=1,$ and $\gamma=2$
$$ V(n)=6^9  (2n^2+2 n -1)n^{14r-10}>0.$$
Thus, in the region $1 \le r \le s <r+1$ the number of real roots of $G(x)$ does not depend on the choice of the parameters for sufficiently large $n.$
Choosing $s=r=1, \; \gamma=2$ we get the following test equation
$$g=-x^6+12n x^5-6(8n^2-2n+1)x^4+4(16n^3-24n^2+14n+1)x^3+$$
$$3(64n^3-60n^2+12n-3)x^2 +12(16n^3-22n^2+3n+1)x+4(16n^3-33n^2+14n-1)=0$$
The discriminant of $g$ in $x$ is
$$2^{21} \cdot 3^9 n^3(n^2-1)^3 (n+2)^3 (2n^2+2n-1) \ne 0 $$
for $ n >1.$
Choosing $n=2$ we obtain that the number of real zeros of $G$ is the same as that of
$$-x^6+24x^5-174x^4+244x^3+879x^2+564x+69=0.$$
The last equation has just two real zeros, numerically $x_1 \approx -0.26$ and $x_2 \approx 6.6.$
\end{proof}


Now Theorem \ref{thmainzer} readily follows by obvious limiting arguments from the next lemma together with Lemma \ref{lemreduction}.
\begin{lemma}
\label{lemmain1} Suppose $\gamma$ satisfies (\ref{gambound}).
Then for sufficiently large $k$ the largest zero of $p_k$ defined by the sequences $\alpha_i$ and $\beta_i$ does not exceed
$$\gamma n^s+n^r \left( 2-2^{-4/3} \delta^{2/3} \, n^{- \frac{2}{3} \min\{1,r+1-s \}} \right),$$
where $\delta$ is any fixed number satisfying
$$
\delta< \left\{
\begin{array}{cc}
 2r  ,& 0 \le s <r,\\
&\\
 (2+\gamma)r  ,& s=r,\\
& \\
\gamma s ,& r< s <r+\frac{1}{2}.
\end{array}
\right.
$$
\end{lemma}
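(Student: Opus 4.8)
The plan is to deduce the bound from the Tur\'{a}n inequality $T_4({\bf p}_k)\ge 0$, which by Theorem \ref{t341} and Lemma \ref{seqab} holds for the sequences $\alpha_i,\beta_i$ on the range $x>\max\{x_{kk},b_k+a_k\}$, together with the information about the intersection of $f=-K_1/(2K_2)$ with the branch $\mathcal{B}_{k+1}$ furnished by the three preceding lemmas. Throughout write $n=m+k$, so that $\alpha_k=n^r$ and $\beta_k=\gamma n^s$ for $k\ge2$, and set $x^{*}=\gamma n^s+n^r(2-2^{-4/3}\delta^{2/3}n^{-\rho})=\beta_k+2\alpha_k-2^{-4/3}\delta^{2/3}n^{r-\rho}$, where $\rho=\tfrac23\min\{1,r-s+1\}$. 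A direct estimate of $\xi=\tfrac12(\beta_{k-1}+\beta_k+\tau)$ gives $\gamma n^s+2n^r-\xi=O(n^{s-1})+O(n^{r-1})+O(n^{2s-r-2})=o(n^{r-\rho})$ in every regime $0\le s<r+\tfrac12$, so for $n$ large one has $x^{*}<\xi<\beta_k+2\alpha_k$.

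Next I would record the mechanism. The preceding lemmas produce a point $y$ at which $f$ meets $\mathcal{B}_{k+1}$, with $x_{k+1,k+1}<y<\xi$, while $K_2>0$ on $[x_{k+1,k+1},\xi)$ (the parabola $K_2$ opens downward with larger zero $\xi$, and $x_{k+1,k+1}$ lies above its smaller zero, since $x_{k+1,k+1}>\beta_{k+1}>\beta_{k-1}$ while $K_2(\beta_{k-1})>0$). At $y$ we have $t_k(y)=f(y)$, so $\mathcal{T}(y,f(y))=a_{k-1}a_ka_{k+2}{\bf p}_{k+1}^{-2}(y)T_4({\bf p}_k)(y)$, whereas identically $\mathcal{T}(x,f(x))=G(x)/(4K_2(x))$. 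Hence, if $x_{k+1,k+1}>b_k+a_k$ (so that $y>x_{k+1,k+1}>\max\{x_{kk},b_k+a_k\}$ and $T_4({\bf p}_k)(y)\ge0$), then $G(y)\ge0$; thus $y$ lies in the segment $[z_1,z_2]$ joining the only two real zeros of $G$ (the preceding lemma gives exactly two, for $n$ large), and from $G(\xi)=-K_1(\xi)^2<0$ together with $\xi>y\ge z_1$ we get $z_2\le\xi<\gamma n^s+2n^r$. In the opposite case $x_{k+1,k+1}\le b_k+a_k$ we simply have $x_{kk}<x_{k+1,k+1}\le\gamma n^s+n^r$. Since $\gamma n^s+n^r\le x^{*}$ for $n$ large, it therefore remains to prove, in the first case, that $z_2\le x^{*}$.

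For that it suffices to show $G(x)<0$ for all $x\in[x^{*},\gamma n^s+2n^r]$: then $G$ has no zero there, and as $z_2\le\gamma n^s+2n^r$ this forces $z_2<x^{*}$, whence $x_{kk}<y\le z_2<x^{*}$. On $[\xi,\gamma n^s+2n^r]$ the inequality holds automatically, since $z_2\le\xi$ and $G(\xi)<0$; only $[x^{*},\xi)$ requires work. Parametrising $x=\gamma n^s+2n^r-un^{r-\rho}$, this range corresponds to $u$ in an interval with right endpoint $2^{-4/3}\delta^{2/3}$ and left endpoint $o(1)$, and one expands $G=4K_0K_2-K_1^2$ in descending powers of $n$. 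The catch is that the relevant zeros of $K_0$, $K_1$, $K_2$ all cluster near $\gamma n^s+2n^r$, so the expansion involves heavy cancellation; after it, the surviving leading term has the form $\Phi(u)\,n^{P}$, with $P$ and the explicit $\Phi$ governed by whether the $\alpha$-driven contributions (powers of $n^r$, from $\alpha_{k-1}\alpha_{k+2}$, $\alpha_k\alpha_{k+1}$, $\ldots$) or the $\beta$-driven ones (the factor $\gamma$ times powers of $n^s$, from the first and second differences of $\beta_i$) dominate. This is precisely the trichotomy $0\le s<r$, $s=r$, $r<s<r+\tfrac12$ in the hypothesis on $\delta$: in the three cases $\Phi$ is negative on $(0,u_0)$ and vanishes at $u_0$, where $u_0=2^{-4/3}\delta_0^{2/3}$ with $\delta_0$ equal to $2r$, $(2+\gamma)r$, $\gamma s$ respectively. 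Since $\delta<\delta_0$ gives $2^{-4/3}\delta^{2/3}<u_0$, the whole $u$-range stays inside $(0,u_0)$, making $G<0$ there for $n$ large.

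The main obstacle is exactly this last verification: that after the cancellations the leading coefficient $\Phi(u)$ has the asserted sign and the asserted threshold. It is elementary but laborious -- one expands the products $\alpha_{k\pm i}\alpha_{k\pm j}$ and the differences $\beta_{k+1}-\beta_{k-2}$, $\beta_{k+1}-2\beta_k+\beta_{k-1}$ to the needed order and sorts the resulting terms by the power of $n$, with the borderline exponents $s=r-1$ and $s=r+\tfrac12$ needing a little extra care. The hypotheses $\gamma\le(m+2)^{r-s+1}/(2s)$ and $\gamma\le(m+3)^{r-s+1}/(3s)$ serve only to make Theorem \ref{t341} applicable through Lemma \ref{seqab} and play no role in the asymptotics, which is why the constants in the bound on $\delta$ are not expected to be sharp. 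Finally, Theorem \ref{thmainzer} follows from this lemma together with Lemma \ref{lemreduction} through the perturbation estimate (\ref{pert}): passing from $n$ to $k$ introduces only an $o(k^{r-\rho})$ term, which is absorbed by taking $\delta$ strictly below the threshold.
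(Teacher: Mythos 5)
Your proposal follows essentially the same route as the paper: it reduces the bound to locating the larger real zero of $G=4K_0K_2-K_1^2$ via the intersection of $f=-K_1/2K_2$ with $\mathcal{B}_{k+1}$ and the two-real-zeros lemma, and then extracts the threshold for $\delta$ from the leading term of the expansion of $G$ at $x=\beta_k+2\alpha_k-u\,n^{r-\rho}$, which is exactly the paper's computation of $n^{2-6r}G(x)/\bigl(4(\nu^3-1)\bigr)$ with $u=2^{-4/3}\delta_0^{2/3}\nu$. The only (harmless) differences are that the paper anchors the location argument by checking $G(\beta_k+\alpha_k)>0$ and evaluating $G$ at a single parametrized point rather than on a whole interval, and that your decisive leading-coefficient verification is asserted rather than carried out -- the same level of detail the paper itself provides, since it only records the resulting values $4r^2$, $(2+\gamma)^2r^2$, $\gamma^2s^2n^{2s-2r}$.
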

\begin{proof}
Let $\xi$ be the largest zero of the equation
$$G(x)=4K_0 K_2-K_1^2=0. $$
It is enough to show that
$$\xi < \gamma n^s+n^r \left( 2-2^{-4/3} \delta^{2/3} \, n^{- \frac{2}{3} \min\{1,r+1-s \}} \right).$$
First we check that
$G(\beta_k+\alpha_k) >0.$ Then any $x>\beta_k+\alpha_k$ such that $G(x)<0$ is an upper bound on $\xi.$
Indeed, calculations yield
$$ G(\beta_k+\alpha_k)= n^{6r}( 27 -4\gamma s (3+5r-3s)n^{s-r-2}+
  2\gamma^2 s^2 n^{2s-2r-2} \, -$$
  $$ 4\gamma^3 r s^3 n^{3s-3r-4} -\gamma^4 s^4 n^{4s-4r-4})\left( 1+O(n^{-1}) \right)>0.$$
  Now the result follows by calculating for an appropriate $x$ the leading term of the expansion of
  $$\frac{n^{2-6r} G(x)}{4(\nu^3-1)}, \; \; \nu <1,$$
  which turns out to be positive and is equal to
  $$
  \left\{
\begin{array}{ccc}
4 r^2 ,& x=\gamma n^s+n^r \left(2- 2^{-4/3}\left(\frac{2r}{ n} \right)^{2/3}\nu \right), & 0 \le s <r,\\
&&\\
 (2+\gamma)^2 r^2  , & x=n^r \left(2+\gamma -  2^{-4/3} \left(\frac{(2+\gamma)r}{ n} \right)^{2/3} \nu \right),  & s=r,\\
&&\\
 \gamma^2 s^2 n^{2s-2r} , & x=\gamma n^s+n^r \left(1- 2^{-4/3} \left(\frac{\gamma s}{ n^{r+1-s}} \right)^{2/3} \nu \right), & r< s <r+\frac{1}{2} \, .
\end{array}
\right.
  $$
  This completes the proof.
  \end{proof}
  {\bf Acknowledgement}
  I am grateful to Mourad E. H. Ismail for a very fruitful discussion.



\end{document}